\newcommand{\fnc}[1]{\ensuremath{\mathcal{#1}}}
\newcommand{\mat}[1]{\ensuremath{\mathsf{#1}}}
\newcommand{\mr}[1]{\mathrm{#1}}
\newcommand{\diff}[0]{\mr{d}}
\newcommand{\Tr}[0]{^\mr{T}}
\renewcommand{\H}[0]{\mat{H}}
\newcommand{\Hk}[0]{\H_{\kappa}}
\newcommand{\D}[0]{\mat{D}}
\newcommand{\Dx}[0]{\mat{D}_{x}}
\newcommand{\Dy}[0]{\mat{D}_{y}}
\newcommand{\Skew}[0]{\mat{S}}
\newcommand{\Sx}[0]{\Skew_{x}}
\newcommand{\Q}[0]{\mat{Q}}
\newcommand{\Qx}[0]{\mat{Q}_{x}}
\newcommand{\Ex}[0]{\mat{E}_{x}}
\newcommand{\R}[0]{\mat{R}}
\newcommand{\B}[0]{\mat{B}}
\newcommand{\Bg}[0]{\mat{B}_{\gamma}}
\newcommand{\Gk}[0]{\mat{F}_{\kappa}}
\newcommand{\Gn}[0]{\mat{F}_{\nu}}
\newcommand{\Cgk}[0]{\mat{C}_{\gamma\kappa}}
\newcommand{\Cgn}[0]{\mat{C}_{\gamma\nu}}
\newcommand{\Lxk}[0]{\bm{l}_{x,\kappa}^{\gamma}}
\newcommand{\Lyk}[0]{\bm{l}_{y,\kappa}^{\gamma}}
\newcommand{\Lxn}[0]{\bm{l}_{x,\nu}^{\gamma}}
\newcommand{\Lyn}[0]{\bm{l}_{y,\nu}^{\gamma}}
\newcommand{\Sig}[0]{\mat{\Sigma}}
\newcommand{\Lam}[0]{\mat{\Lambda}}
\newcommand{\Lamxx}[0]{\Lam_{xx}}
\newcommand{\Lamxy}[0]{\Lam_{xy}}
\newcommand{\Lamyx}[0]{\Lam_{yx}}
\newcommand{\Lamyy}[0]{\Lam_{yy}}
\newcommand{\pk}{\bm{p}_{k}}
\newcommand{\nk}[0]{n_{\kappa}}
\newcommand{\poly}[1]{\mathbb{P}_{#1}}
\newcommand{\Nx}[0]{\mat{N}_x}
\newcommand{\Ny}[0]{\mat{N}_y}
\newcommand{\Nxg}[0]{\mat{N}_{x,\gamma}}
\newcommand{\Nyg}[0]{\mat{N}_{y,\gamma}}
\newcommand{\Rg}[0]{\mat{R}_{\gamma}}
\newcommand{\Rgk}[0]{\mat{R}_{\gamma\kappa}}
\newcommand{\Rgn}[0]{\mat{R}_{\gamma\nu}}
\newcommand{\Dgk}[0]{\mat{D}_{\gamma\kappa}}
\newcommand{\Dgn}[0]{\mat{D}_{\gamma\nu}}
\newcommand{\Ukp}[0]{\bm{u}_\kappa}
\newcommand{\Vkp}[0]{\bm{v}_\kappa}
\newcommand{\Unu}[0]{\bm{u}_\nu}
\newcommand{\Vnu}[0]{\bm{v}_\nu}
\newcommand{\Wkp}[0]{\bm{w}_\kappa}
\newcommand{\Wnu}[0]{\bm{w}_\nu}
\newcommand{\Ugam}[0]{\bm{u}_\gamma}
\newcommand{\W}[0]{\bm{w}}
\newcommand{\SumAll}{\sum_{\Omega_\kappa \in \fnc{T}_h}}
\newcommand{\Mk}[0]{\mat{M}_{\kappa}}
\newcommand{\Uh}{\bm{u}_{h}}
\newcommand{\Vh}{\bm{v}_{h}}
\newcommand{\Ug}{\bm{u}_{\gamma}}
\newcommand{\Vg}{\bm{v}_{\gamma}}
\newcommand{\Siggk}[1]{\Sig_{\gamma\kappa}^{(#1)}}
\newcommand{\Siggn}[1]{\Sig_{\gamma\nu}   ^{(#1)}}
\newcommand{\Siggam}[1]{\Sig_{\gamma}^{(#1)}}
\newcommand{\phnt}[1]{\phantom{#1}}
\newcommand{\LL}{\llbracket}
\newcommand{\RR}{\rrbracket}
\newcommand{\I}[0]{\mat{I}} % identity
\DeclareMathOperator{\mydiag}{diag}
\newcommand{\eg}[0]{{e.g.\@}\xspace}
\newcommand{\ie}[0]{{i.e.\@}\xspace}
\newcommand{\etc}[0]{{etc.\@}\xspace}
\newcommand{\resp}[0]{{resp.\@}\xspace}
\newcommand{\ignore}[1]{} % comment out large sections of code
\journalname{Journal of Scientific Computing}
\titlerunning{Interior Penalties for SBP discretizations}
\begin{document}

\title{Interior Penalties for Summation-by-Parts Discretizations of Linear Second-Order Differential Equations}

\author{Jianfeng Yan \and
  Jared Crean \and
  Jason E. Hicken}

\institute{Jianfeng Yan, Graduate Student\at
  Rensselaer Polytechnic Institute \\
  \email{yanj4@rpi.edu}
  \and
  Jared Crean, Graduate Student\at
  Rensselaer Polytechnic Institute \\
  \email{creanj@rpi.edu}
  \and
  Jason E.\ Hicken, Assistant Professor\at
  Rensselaer Polytechnic Institute \\
  \email{hickej2@rpi.edu}
}

\date{}
% The correct dates will be entered by the editor

\maketitle
\begin{abstract}
  This work focuses on multidimensional summation-by-parts (SBP) discretizations
  of linear elliptic operators with variable coefficients.  We consider a
  general SBP discretization with dense simultaneous approximation terms (SATs),
  which serve as interior penalties to enforce boundary conditions and
  inter-element coupling in a weak sense.  Through the analysis of adjoint
  consistency and stability, we present several conditions on the SAT penalties.
  Based on these conditions, we generalize the modified scheme of Bassi and Rebay
  (BR2) and the symmetric interior penalty Galerkin (SIPG) method to SBP-SAT
  discretizations.  Numerical experiments are carried out on unstructured grids
  with triangular elements to verify the theoretical results.
\end{abstract}
%--------------------------------------------------------------------------------
\section{Introduction}

Summation-by-parts (SBP) operators~\cite{Kreiss1974, Strand1994} are high-order
finite-difference operators that can be used to construct time-stable
discretizations.  The stability and high-order accuracy of SBP discretizations
makes them attractive for simulating conservation laws over long periods of
time.  Diagonal-norm SBP operators are especially appealing for compressible
fluid flow simulations, because they can be used to construct discretizations
that are entropy stable without relying on exact integration~\cite{Fisher2013b,
  Crean2016}.

A drawback of classical SBP operators is that, like all one-dimensional
operators, they require multiblock tensor-product grids in order to be applied
to complex geometries; generating high-quality, multiblock, hexahedral grids is
difficult to automate and time consuming when done manually.  Motivated by this
drawback, Hicken, Del Rey Fern\'andez, and Zingg~\cite{multiSBP} generalized the
SBP definition to arbitrary bounded domains.  They showed that a degree $p$
diagonal-norm SBP operator can be constructed on a given domain provided a
degree $2p-1$ cubature exists whose nodes produce a full-rank Vandermonde
matrix.

Ultimately, we are interested in using diagonal-norm, multidimensional SBP
operators to construct high-order, entropy-stable discretizations of the
Navier-Stokes equations on unstructured grids.  We believe this approach will
combine the stability and accuracy advantages of classical SBP operators with the
flexibility of tetrahedral grids; however, there are several developments
necessary to bring multidimensional SBP methods to the level of maturity of
classical SBP methods.  The development considered in this work is the
discretization of linear elliptic operators with variable coefficients, which
are a prototype for the viscous terms in the Navier-Stokes equations.

In this paper we do \emph{not} address the construction of specialized
second-derivative SBP operators~\cite{Mattsson2004b, Mattsson2012,
  Fernandez2015}.  Instead, we adopt the so-called ``first-derivative twice''
approach.  For classical finite-difference methods, applying the first-\linebreak
derivative twice approximately doubles the stencil size and is typically less
accurate~\cite{Mattsson2012}; however, the multidimensional SBP operators that
we intend to use are dense on each element, so the stencil size is not changed
by applying the first-derivative operator twice.  That said, we believe that
more general multidimensional SBP operators, particularly those that are sparse
on each element, would benefit from a generalization of the works
in~\cite{Mattsson2012} and~\cite{Fernandez2015}.

\ignore{
The purpose of this paper is \emph{not} to construct second-derivative SBP
operators.  For classical one-dimensional SBP operators, narrow
stencil\footnote{Here, narrow stencil refers to explicit finite-difference
  operators (\ie not compact) of a given accuracy that use a minimal number of
  points in their stencil; see~\cite{Mattsson2004b}} second-derivative operators
were studied by Mattsson and Nordstr\"{o}m~\cite{Mattsson2004b} in the constant
coefficient case, and by Mattsson~\cite{Mattsson2012} in the variable
coefficient case. Fern\'andez and Zingg~\cite{Fernandez2015} analyzed
second-derivative operators for generalized SBP operators.

The alternative to using specialized second-derivative operators is to apply
first-derivative SBP operators twice, and this is the approach we adopt.  For
classical finite-difference methods, applying the first-derivative twice
approximately doubles the stencil size and is typically less
accurate~\cite{Mattsson2012}; however, the multidimensional SBP operators that
we intend to use are dense on each element, so the stencil size is not changed
by applying the first-derivative operator twice.  Nevertheless, we believe that
more general multidimensional SBP operators, particularly those that are sparse
on each element, would greatly benefit from a generalization of the works
in~\cite{Mattsson2012} and~\cite{Fernandez2015}.
}

Rather than SBP operators themselves, the focus of this paper is the analysis of
penalty terms to enforce boundary conditions and inter-element coupling for
multidimensional SBP discretizations of elliptic and parabolic partial
differential equations (PDEs).  In the SBP literature, such penalty terms are
called simultaneous approximation terms (SATs) and were popularized by~\cite{Carpenter1999}.  SATs for multidimensional SBP discretizations
of the linear advection equation were recently studied in~\cite{Fernandez2016}, and SATs
for tensor-product SBP discretizations of second-order PDEs have been
investigated by a number of authors; see, \eg, \cite{Gong2011} and the
review~\cite{Carpenter2010}.

SATs are analogous to interior penalties (IPs) used in the finite-element (FE)
community; see the review~\cite{Arnold2002} and the references therein.  Indeed,
there are strong connections between FE and SBP discretizations, and the present
work draws heavily from the discontinuous Galerkin (DG) literature.

Despite the extensive prior work in this area, the present work makes the
following contributions.
\begin{enumerate}
\item We present the requirements on \emph{dense} SAT coefficient matrices to
  obtain multidimensional SBP-SAT discretizations that are simultaneously
  consistent, conservative, adjoint consistent, and stable.
\item We generalize the modified scheme of Bassi and Rebay
  (BR2)~\cite{Bassi2005} and the symmetric interior penalty Galerkin (SIPG)
  method~\cite{douglas:1976, Arnold2002, Hartmann2005} to multidimensional SBP
  discretizations. In addition, we show how, in the SBP-SAT context, SIPG can be
  derived from BR2 using matrix analysis.
\item We show that a particular SBP-SAT implementation of the BR2 penalty has a
  computational cost of $\text{O}(p)$ in 2D and $\text{O}(p^2)$ in 3D, in
  contrast with $\text{O}(p^2)$ and $\text{O}(p^3)$, respectively, for DG
  implementations with Lagrange basis functions.
\end{enumerate}
The above results do not rely on exact integration, so they are valid for fully
discretized elliptic problems and semi-discretized parabolic problems.

The remaining sections are organized as follows.  After introducing notation and
reviewing SBP operators, Section~\ref{sec:discrete} presents the model parabolic
PDE and its SBP-SAT discretization.  Section~\ref{sec:adjoint} investigates the
adjoint consistency of the discretization and delineates the necessary
adjoint-consistency conditions on the SAT penalties.  The penalties are further
constrained by the energy-stability analysis in
Section~\ref{sec:energy_analysis}.  The resulting conditions are used to
generalize the BR2 and SIPG methods to multidimensional SBP discretizations in
Section~\ref{sec:generalize}.  Verification studies are provided in
Section~\ref{sec:results}, and a summary is provided in
Section~\ref{sec:conclude}.

%--------------------------------------------------------------------------------
\section{Multi-dimensional SBP discretization of parabolic PDEs}\label{sec:discrete}

%--------------------------------------------------------------------------------
\subsection{Notation}

Functions are denoted with capital letters in calligraphic font; for example
$\fnc{U} \in L^{2}(\Omega)$ is a square-integrable function on the domain
$\Omega$.  A function evaluated on a node set is denoted by a lowercase letter
in bold font.  For example, the function $\fnc{U}$ evaluated at the nodes of
$S=\left\{(x_{i},y_{i})\right\}_{i=1}^{n}$ is given by
\begin{equation*}
  \bm{u} = \begin{bmatrix}
  \fnc{U}(x_1,y_1) & \fnc{U}(x_2,y_2) & \cdots & \fnc{U}(x_n,y_n)
  \end{bmatrix}^T.
\end{equation*}
The space of polynomials of total degree $p$ in $x$ and $y$ on $\Omega$ is
denoted by $\poly{p}(\Omega)$.  As with generic functions, a polynomial that is
evaluated at the points of $S$ will be represented using its corresponding
lowercase letter in bold font; for example, for $\fnc{P} \in \poly{p}(\Omega)$
we would have
\begin{equation*}
  \bm{p} \equiv \begin{bmatrix} \fnc{P}(x_{1},y_{1}) & \fnc{P}(x_2,y_2) & \cdots & 
    \fnc{P}(x_{n},y_{n}) \end{bmatrix}^T.
\end{equation*}

Matrices are represented with an uppercase sans-serif type, for example $\mat{A}
\in \mathbb{R}^{n\times m}$. 

\subsection{SBP definition and face operators}

We adopt the definition of multidimensional SBP operators proposed in
\cite{multiSBP}.  To keep the presentation self-contained, the definition for an
operator approximating $\partial/\partial x$ on a two dimensional domain is
provided below.  The definition for the SBP operator approximating
$\partial/\partial y$ is analogous.

\begin{definition}\label{def:SBP}
  {\bf Two-dimensional summation-by-parts operator:} Consider an open and
  bounded domain $\Omega_\kappa \subset \mathbb{R}^{2}$ with a piecewise-smooth
  boundary $\partial \Omega_\kappa$.  The matrix $\Dx$ is a degree $p$ SBP
  approximation to the first derivative $\frac{\partial}{\partial x}$ on the
  nodes $S_{\kappa}=\left\{(x_{i},y_{i})\right\}_{i=1}^{\nk}$ if
  \begin{enumerate}
  \item For all $\fnc{P} \in \poly{p}(\Omega_{\kappa})$, the vector $\Dx\pk$ is
    equal to $\partial \fnc{P}/\partial x$ at the nodes $S_{\kappa}$;
    \label{sbp:accuracy}
  \item $\Dx = \H^{-1}\Qx$, where $\H$ is symmetric positive-definite,
    and; \label{sbp:H}
  \item $\Qx = \Sx + \frac{1}{2}\Ex$, where $\Sx^T=-\Sx$, $\Ex^T=\Ex$, and
    $\Ex$ satisfies
    \begin{equation*}
      \bm{p}^T\Ex\bm{q} =\displaystyle\oint_{\Gamma} \fnc{P} \fnc{Q} n_{x}
      \mr{d}\Gamma,
      \qquad\forall\; \fnc{P}, \fnc{Q} \in \poly{r}(\Omega_{\kappa}),
    \end{equation*}
    where $r \ge p$, and $n_{x}$ is the $x$ component of
    $\bm{n}=\left[n_{x},n_{y}\right]\Tr$, the outward pointing unit normal on
    $\partial \Omega_\kappa$. \label{sbp:Ex}
  \end{enumerate}  
\end{definition}

The subsequent analysis is restricted to so-called diagonal-norm SBP operators,
that is, SBP operators for which $\H$ is a diagonal matrix with positive
entries.  In this case, it was shown in~\cite{multiSBP} that the nodes
$S_{\kappa}$ and diagonal entries of $\H$ define a cubature rule that is exact
for polynomials of total degree $2p-1$.

In order to define SATs for multidimensional SBP operators, we follow
References \cite{Hicken2016, Fernandez2016} and introduce interpolation/extrapolation
operators from the SBP element nodes to cubature nodes on the faces of the
elements.  For example, consider an element $\Omega_\kappa$ with a piecewise
smooth boundary $\partial \Omega_\kappa$, and let $\gamma \subset \partial
\Omega_\kappa$ denote one of its faces.  Let $S_{\gamma} =
\{(x_j,y_j)\}_{j=1}^{n_{\gamma}} \subset \gamma$ be a set of cubature nodes with
corresponding positive weights $\{ b_{j} \}_{j=1}^{n_{\gamma}}$ that is exact
for polynomials of degree $2r$, where $r \ge p$.  The matrix
$\Rgk \in \mathbb{R}^{n_{\gamma}\times n_{\kappa}}$ is a
degree $r$ interpolation/extrapolation operator from the SBP nodes $S_{\kappa}$
to the face nodes $S_{\gamma}$ if, for all $\fnc{P} \in \poly{r}(\Omega_{\kappa})$,
\begin{equation*}
  \left(\Rgk \pk\right)_{j} = 
  \sum_{i=1}^{\nk} (\Rgk)_{ji} \fnc{P}(x_{i},y_{i}) 
  = \fnc{P}(x_{j},y_{j}),
  \qquad\forall j=1,2,\ldots,n_{\gamma}.
\end{equation*}

For a given (strong) cubature rule of degree $2p-1$ defined on $\Omega_\kappa$, it was
shown in \cite{Fernandez2016} that there exists at least one SBP operator whose
corresponding matrix $\Ex$ has the decomposition
\begin{equation}
  \Ex = \sum_{\gamma \subset \partial \Omega_\kappa} \Rgk^{T} \Nxg \Bg \Rgk,
  \label{eq:SATdecomp}
\end{equation}
where $\B_{\gamma} = \mydiag\left(b_{1},b_{2},\ldots,b_{n_{\gamma}}\right)$ is
an $n_{\gamma}\times n_{\gamma}$ diagonal matrix holding the cubature weights for
$\gamma$ along its diagonal, and $\Nxg =
\mydiag\left(n_{x,1},n_{x,2},\ldots,n_{x,n_{\gamma}}\right)$ is an
$n_{\gamma}\times n_{\gamma}$ diagonal matrix holding the $x$ component of the
outward unit normal with respect to $\Omega_\kappa$ at the cubature points of $\gamma$.
We will assume in the following analysis that the SBP operators are such that
$\Ex$ has the decomposition~\eqref{eq:SATdecomp}, and that the operators in the
$y$ direction have analogous decompositions.

\ignore{
SBP discretizations are distinguished characteristics of SBP discretizations are
that
\begin{enumerate}
\item they can be expressed equivalently in (pointwise) strong or weak form, and
\item their solution is defined pointwise and not, in general, in terms of a
  (unique) basis.
\end{enumerate}
Some FE methods exhibit the first characteristic; for example, spectral collocation
methods based on Legendre-Gauss or Legendre-Gauss-Lobatto quadrature
points~\cite{Gassner}.  To the best of our knowledge, the second characteristic
is not shared with FE methods.
}

%--------------------------------------------------------------------------------
\subsection{The model PDE}

We consider the following linear parabolic PDE --- or the
corresponding steady Poisson PDE --- defined on the compact domain $\Omega
\subset \mathbb{R}^{2}$:
\begin{equation}\label{eq:parabolic}
    \frac{\partial \fnc{U}}{\partial t} - \nabla\cdot\left( \Lambda \nabla \fnc{U} \right) = \fnc{F},\quad \forall \; (x,y) \in \Omega,
\end{equation}
where $\fnc{F} \in L^{2}(\Omega\times[0,T])$ is a given source term and
\begin{equation*}
\Lambda \equiv \begin{bmatrix} \lambda_{xx} & \lambda_{xy}\\ 
  \lambda_{yx} & \lambda_{yy} \end{bmatrix}
\end{equation*}
is a symmetric, positive-definite tensor.  The parabolic PDE is provided with
the initial condition
\begin{equation}\label{eq:IC}
  \fnc{U}(0,x,y) = \fnc{U}_{0}(x,y), \quad \forall \; (x,y) \in \Omega,
\end{equation}
where $\fnc{U}_{0} \in L^2(\Omega)$.  Finally, the PDE is supplied with the
Dirichlet and Neumann boundary conditions,
\begin{equation}\label{eq:BCs}
  \begin{aligned}
    &\fnc{U}(t,x,y) = \fnc{U}_\fnc{D}(t,x,y), \quad \forall \; (x,y) \in \Gamma^\fnc{D}, \\[2ex]
    &\hat{\bm{n}} \cdot \left( \Lambda\nabla \fnc{U}(t,x,y)\right) = \fnc{U}_\fnc{N}(t,x,y),
    \quad \forall \; (x,y) \in \Gamma^\fnc{N},
  \end{aligned}
\end{equation}
where $\fnc{U}_\fnc{D} \in L^2(\Gamma^\fnc{D}\times[0,T])$ and $\fnc{U}_\fnc{N}
\in L^2(\Gamma^\fnc{N}\times[0,T])$.  The vector $\hat{\bm{n}} = [n_x,n_y]^T$ is
the outward pointing unit normal on the boundary $\partial\Omega$.  We assume that the
Dirichlet boundary is nonempty, $\Gamma^\fnc{D} \ne \emptyset$, so that the
solution is unique.  Furthermore, $\Gamma = \Gamma^\fnc{D} \cup \Gamma^\fnc{N}$
and $\Gamma \setminus \Gamma^\fnc{D} = \Gamma^\fnc{N}$.

%--------------------------------------------------------------------------------
\subsection{Strong-form Discretization}

Let $\fnc{T}_h = \bigcup_{\kappa=1}^{K} \Omega_\kappa$ denote a partition of
the domain $\Omega$ into $K$ SBP elements, where $\Omega_\kappa$ denotes the
domain of the $\kappa$th element.  The discrete solution on element $\Omega_\kappa$
will be represented by the vector $\bm{u}_{\kappa} \in \mathbb{R}^{\nk}$ whose
entries are the discrete solution at the SBP nodes $S_{\kappa}$.  The global
discrete solution, denoted $\bm{u}_h \in \mathbb{R}^{\sum \nk}$, is the
concatenation of all elementwise solutions.

A consistent SBP-SAT semi-discretization of \eqref{eq:parabolic} on element $\kappa$
is given by
\begin{equation}\label{eq:parabolic_SBP}
\frac{\diff \bm{u}_{\kappa}}{\diff t} = \D_\kappa\bm{u}_{\kappa} + \bm{f}_\kappa 
-\Hk^{-1}\bm{s}_{\kappa}^{\fnc{I}}\left(\bm{u}_{h}\right) 
-\Hk^{-1}\bm{s}_{\kappa}^{\fnc{B}}\left(\bm{u}_{h}, \bm{u}_\fnc{D}, \bm{u}_\fnc{N}\right),
\end{equation}
where $\bm{f}_{\kappa}$ is $\fnc{F}$ evaluated at the nodes of element $\Omega_\kappa$, 
and
\begin{equation}\label{eq:Dkappa}
\D_\kappa = \left\{ \begin{bmatrix} \Dx & \Dy \end{bmatrix}
\begin{bmatrix} \Lamxx & \Lamxy \\ \Lamyx & \Lamyy \end{bmatrix}
\begin{bmatrix} \Dx \\ \Dy \end{bmatrix} \right\}_{\kappa}
\end{equation}
is the SBP approximation of $\nabla\cdot(\Lambda \nabla)$ on element
$\Omega_\kappa$, with $\Dx \in \mathbb{R}^{\nk\times \nk}$ and $\Dy \in
\mathbb{R}^{\nk \times \nk}$ the first-derivative SBP operators in the $x$ and
$y$ directions, respectively.  The subscript notation, $()_\kappa$, indicates a
vector or operator on element $\kappa$.  The Cartesian elements of the tensor
$\Lambda$ are evaluated at the SBP nodes and stored in the diagonal matrices
$\Lamxx$, $\Lamxy$, $\Lamyx$ and $\Lamyy$.  For example,
\begin{equation*}
  \Lamxx = \mydiag\left(\lambda_{xx}(x_1,y_1), \lambda_{xx}(x_2,y_2),\ldots, \lambda_{xx}(x_n,y_n)\right).
\end{equation*}

The vectors $\bm{s}_{\kappa}^{\fnc{I}}$ and $\bm{s}_{\kappa}^{\fnc{B}}$ on the
right-hand side of \eqref{eq:parabolic_SBP} are the interface and boundary SAT
penalties, respectively.  For element $\kappa$ these penalties are defined by
\begin{equation*}
\bm{s}_{\kappa}^{\fnc{I}}\left(\bm{u}_{h}\right)
= \sum_{\gamma \subset \Gamma_\kappa^\fnc{I}}
\begin{bmatrix} \Rgk^T & \Dgk^T \end{bmatrix}
\setlength{\arraycolsep}{7pt}
\begin{bmatrix} \Siggk{1} & \Siggk{3} \\ \Siggk{2} & \Siggk{4} \end{bmatrix}
\begin{bmatrix} \Rgk\Ukp - \Rgn\Unu \\ \Dgk\Ukp + \Dgn\Unu \end{bmatrix} 
\end{equation*}
and
\begin{multline*}
  \bm{s}_{\kappa}^{\fnc{B}}\left(\bm{u}_{h}, \bm{u}_\fnc{D}, \bm{u}_\fnc{N}\right)
  = \sum_{\gamma \subset \Gamma_\kappa^{\fnc{D}}}
  \begin{bmatrix} \Rgk^T & \Dgk^T \end{bmatrix}
  \begin{bmatrix} \phnt{-}\Sig_{\gamma}^{\fnc{D}} \\ -\Bg \end{bmatrix}
  (\Rgk \Ukp -\bm{u}_{\gamma {\fnc{D}}}) \\
  + \sum_{\gamma \subset \Gamma_\kappa^{\fnc{N}}} \Rgk^{T}\B_{\gamma} (\Dgk \Ukp - \bm{u}_{\gamma {\fnc{N}}}),
\end{multline*}
respectively.  The set $\Gamma_{\kappa} = \partial \Omega_\kappa$ is the
boundary of element $\Omega_\kappa$, while $\Gamma_{\kappa}^{\fnc{D}} =
\Gamma_{\kappa} \cap \Gamma^\fnc{D}$ and $\Gamma_{\kappa}^{\fnc{N}} =
\Gamma_{\kappa} \cap \Gamma^\fnc{N}$. We use $\nu$ as the generic index of the
element sharing face $\gamma$ with the $\kappa$th element, \ie, $\gamma =
\Omega_\kappa \cap \Omega_\nu$.  The vectors $\bm{u}_{\gamma\fnc{D}}$ and
$\bm{u}_{\gamma\fnc{N}}$ in the boundary penalties denote the functions
$\fnc{U}_\fnc{D}$ and $\fnc{U}_\fnc{N}$, respectively, evaluated at the cubature
nodes of face $\gamma$.

Recall that the matrix $\Rgk \in \mathbb{R}^{n_{\gamma} \times \nk}$ is the
interpolation/extrapolation operator from the nodes of $\Omega_\kappa$ to the
nodes of face $\gamma \subset \Gamma_{\kappa}$, while $\Rgn \in
\mathbb{R}^{n_{\gamma} \times n_{\nu}}$ is the interpolation/extrapolation
operator from the nodes of the neighbor to the nodes of $\gamma$.  Therefore,
the normal derivative operators that discretize $\vec{n}\cdot(\Lambda\nabla)$ on
face $\gamma$ are given by
\begin{align*}
  \D_{\gamma\kappa} &= \Nxg \Rgk \left(\Lamxx \Dx + \Lamxy \Dy\right)_\kappa
  + \Nyg \Rgk \left(\Lamyx\Dx + \Lamyy\Dy\right)_\kappa, \\
  \D_{\gamma\nu} &= -\Nxg \Rgn \left(\Lamxx \Dx + \Lamxy \Dy\right)_{\nu}
  - \Nyg \Rgn \left(\Lamyx\Dx + \Lamyy\Dy\right)_{\nu}.
\end{align*}
In addition, recall that $\Nxg$ (\resp $\Nyg$) is a
diagonal matrix holding the $x$ (\resp $y$) component of the unit outward
normal, with respect to $\kappa$, at the cubature nodes of face $\gamma$.  Thus,
the sign of this matrix must be reversed for $\Dgn$.

Finally, the matrices $\Siggk{i} = \left(\Siggk{i}\right)^T \in \mathbb{R}^{n_{\gamma}
  \times n_{\gamma}}$, $i=1,2,3,4$ denote the symmetric SAT coefficient matrices
for element $\kappa$ on face $\gamma$.  Similarly, $\Sig_{\gamma}^{\fnc{D}}$ is the
coefficient matrix for the SAT on a Dirichlet boundary face of $\kappa$.  These
coefficients are to be determined in the following analysis.  Note that
$\Sig_{\gamma\kappa}^{(i)} \ne \Sig_{\nu\gamma}^{(i)}$ in general; that is, we
do not assume \emph{ab initio} that the coefficient matrices of two adjacent
elements are necessarily equal.

\ignore{
\begin{remark} 
  We assume the same SBP operator is used on all elements and that each face uses
  the same number of cubature nodes, although neither of these assumptions is not
  strictly necessary.
\end{remark}
}

%--------------------------------------------------------------------------------
\subsection{Face-based weak forms of the discretization}

The discretization \eqref{eq:parabolic_SBP} is the element-based strong form.
For the subsequent analysis, two equivalent face-based weak forms will prove
more useful.  Before deriving these weak formulations, we introduce two
identities that will be helpful.

\begin{proposition}\label{prop:identities}
  Let $\D_\kappa$ be defined as in~\eqref{eq:Dkappa}.  Then, $\forall\; \Ukp,
  \Vkp \in \mathbb{R}^{\nk}$, 
  \begin{align}
    \Vkp^T \Hk \D_\kappa \Ukp &= -\Vkp^T \Mk \Ukp
    + \sum_{\gamma \subset \Gamma_\kappa} \Vkp^T \Rgk^T \B_{\gamma} \Dgk \Ukp, 
    \label{eq:identity-1} \\
    \text{and}\qquad
    -\Vkp^T \Mk \Ukp &= 
    \Vkp^T \D_\kappa^T \Hk \Ukp 
    - \sum_{\gamma \subset \Gamma_\kappa} \Vkp^T \Dgk^T \B_{\gamma} \Rgk \Ukp,
    \label{eq:identity} \\
    \intertext{where $\Mk$ is the symmetric semi-definite matrix}
    \Mk &= \begin{bmatrix} \Dx \\ \Dy \end{bmatrix}_{\kappa}^T
    \begin{bmatrix}
      \H\Lamxx & \H\Lamxy \\
      \H\Lamyx & \H\Lamyy
    \end{bmatrix}_{\kappa}
    \begin{bmatrix}
      \Dx \\ \Dy
    \end{bmatrix}_{\kappa}. \notag 
  \end{align}
\end{proposition}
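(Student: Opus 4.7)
\medskip

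\noindent\textbf{Proof plan.} The plan is to expose the SBP structure hidden in $\D_\kappa$, split the resulting expression into a symmetric volume piece and a boundary piece via the decomposition $\Qx = \Sx + \tfrac12\Ex$ (equivalently $\Qx + \Qx^T = \Ex$), and then turn the boundary piece into a sum over faces using the decomposition~\eqref{eq:SATdecomp}. The second identity is then just the transpose of the first, combined with the symmetry of $\Mk$.

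First I would rewrite $\Hk\D_\kappa$ in the form
\begin{equation*}
\Hk \D_\kappa = \begin{bmatrix}\Qx & \Qy\end{bmatrix}_\kappa
\begin{bmatrix}\Lamxx & \Lamxy\\ \Lamyx & \Lamyy\end{bmatrix}_\kappa
\begin{bmatrix}\Dx\\ \Dy\end{bmatrix}_\kappa,
\end{equation*}
using $\Hk\Dx = \Qx$ and $\Hk\Dy = \Qy$, and then substitute $\Qx = -\Qx^T + \Ex$, $\Qy = -\Qy^T + \Ey$. The $-\Qx^T,-\Qy^T$ terms, once pulled outside via $\Vkp^T$, become $-(\Dx\Vkp)^T\Hk$ and $-(\Dy\Vkp)^T\Hk$; since $\Hk$ and the $\Lambda$-blocks are all diagonal they commute, so bringing $\Hk$ inside reconstructs exactly $-\Vkp^T\Mk\Ukp$. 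The remaining piece, $\Vkp^T[\Ex,\Ey]\Lambda[\Dx;\Dy]\Ukp$, is the boundary contribution.

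Next I would process the boundary piece by substituting~\eqref{eq:SATdecomp} (and its $y$-analogue), then commuting the diagonal matrices $\Nxg$, $\Nyg$, $\Bg$ to pull a common $\Rgk^T\Bg$ to the left and collect the remaining factors into
\begin{equation*}
\Nxg\Rgk(\Lamxx\Dx + \Lamxy\Dy)_\kappa + \Nyg\Rgk(\Lamyx\Dx + \Lamyy\Dy)_\kappa = \Dgk,
\end{equation*}
which is precisely the definition of the normal-derivative operator. This yields $\sum_{\gamma\subset\Gamma_\kappa}\Vkp^T\Rgk^T\Bg\Dgk\Ukp$ and completes~\eqref{eq:identity-1}.

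For~\eqref{eq:identity}, I would take the scalar identity~\eqref{eq:identity-1}, transpose it, and use $\Hk^T = \Hk$, $\Bg^T = \Bg$, together with the symmetry of $\Mk$ (which is clear from its defining block-matrix expression, since $\Hk\Lamxx$, $\Hk\Lamyy$ are diagonal and the off-diagonal blocks $\Hk\Lamxy$, $\Hk\Lamyx$ are equal by the symmetry of $\Lambda$). Relabelling $\Ukp \leftrightarrow \Vkp$ and rearranging gives~\eqref{eq:identity}. The only mildly delicate step is bookkeeping in the decomposition-and-recomposition of $\Hk\D_\kappa$: one must verify that every diagonal matrix $(\Hk,\Lambda_{\bullet\bullet},\Nxg,\Nyg,\Bg)$ commutes with its neighbors so that the $\Lambda$-blocks can be absorbed cleanly into $\Mk$ on one side and into $\Dgk$ on the other. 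Once that commutativity is invoked, the rest is direct.
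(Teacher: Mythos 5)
Your argument is correct and is precisely the ``straightforward application of the properties of SBP operators'' that the paper invokes when it omits this proof: the splitting $\Qx=\Ex-\Qx^T$, the reassembly of the skew part into $-\Mk$, the use of the $\Ex$ decomposition~\eqref{eq:SATdecomp} to produce $\sum_\gamma\Rgk^T\Bg\Dgk$, and the transpose-plus-symmetry-of-$\Mk$ step for~\eqref{eq:identity} are all exactly as intended. No gaps; the commutativity bookkeeping you flag is indeed where the diagonal-norm assumption enters (symmetry of the middle block matrix in $\Mk$), and you handle it correctly.
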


The proof of Proposition~\ref{prop:identities} is a straightforward application
of the properties of SBP operators and is omitted.

\begin{remark}
  The identities in Proposition~\ref{prop:identities} are the SBP analogs of
  applying integration by parts to $\int_{\Omega_\kappa} \fnc{V} \nabla \cdot(\Lambda
  \nabla \fnc{U}) \,\diff \Omega$ once (the first identity) and twice (the second
  identity).
\end{remark}

To obtain the element-based weak formulation, we first left multiply
\eqref{eq:parabolic_SBP} by $\Vkp^{T} \Hk$, where $\Vkp \in
\mathbb{R}^{\nk}$ is an arbitrary vector, and then apply \eqref{eq:identity-1}.
This produces the following form of the discretization: for all
$\kappa=1,2,\dots,K$, find $\Ukp \in \mathbb{R}^{\nk}$ such that, $\forall \Vkp
\in \mathbb{R}^{\nk}$,
\begin{multline*}
  \Vkp^{T} \Hk \frac{\diff \bm{u}_{\kappa}}{\diff t} = 
  -\Vkp^T \Mk \Ukp
  + \sum_{\gamma \subset \Gamma_\kappa} \Vkp^T \Rgk^T \B_{\gamma} \Dgk \Ukp
  + \Vkp^{T} \Hk \bm{f}_\kappa \\
  - \Vkp^{T} \bm{s}_{\kappa}^{\fnc{I}}\left(\bm{u}_{h}\right) 
  - \Vkp^{T} \bm{s}_{\kappa}^{\fnc{B}}\left(\bm{u}_{h}, \bm{u}_\fnc{D}, \bm{u}_\fnc{N}\right).
\end{multline*}

To obtain the first of two face-based weak formulations, we sum the
element-based weak form over all $\kappa$.  After rearrangement, this gives the
statement: find $\bm{u}_{h} \in \mathbb{R}^{\sum \nk}$ such that
\begin{equation*}
  \sum_{\kappa \in \fnc{T}_{h}} \Vkp^{T} \Hk \frac{\diff \bm{u}_{\kappa}}{\diff t}
  = B_{h}(\bm{u}_{h}, \bm{v}_{h}),
  \qquad \forall\; \bm{v}_{h} \in \mathbb{R}^{\sum \nk},
\end{equation*}
where the bilinear form on the right is defined by
\begin{multline} \label{face-based-form-1}
  B_{h}(\bm{u}_h, \bm{v}_h) := - \sum_{\kappa \in \fnc{T}_{h}} \Vkp^T
  \Mk \Ukp + \sum_{\kappa \in \fnc{T}_{h}} \Vkp^{T} \Hk 
  \bm{f}_\kappa \\ - \sum_{\gamma \subset \Gamma^{\fnc{I}}}
  \begin{bmatrix} \Rgk \Vkp \\ \Rgn \Vnu \\ \Dgk \Vkp \\ \Dgn \Vnu \end{bmatrix}^T
  \setlength{\arraycolsep}{7pt}
  \begin{bmatrix}
    \phnt{-}\Siggk{1} & -\Siggk{1} & \Siggk{3} - \Bg & \Siggk{3} \\
    -\Siggn{1} & \phnt{-}\Siggn{1} & \Siggn{3} & \Siggn{3} - \Bg \\
    \phnt{-}\Siggk{2} & -\Siggk{2} & \Siggk{4} & \Siggk{4} \\
    -\Siggn{2} & \phnt{-}\Siggn{2} & \Siggn{4} & \Siggn{4}
  \end{bmatrix}
  \begin{bmatrix} \Rgk \Ukp \\ \Rgn \Unu \\ \Dgk \Ukp \\ \Dgn \Unu \end{bmatrix} \\
  - \sum_{\gamma \subset \Gamma^\fnc{D}}
  \begin{bmatrix} \Rgk \Vkp \\ \Dgk \Vkp \end{bmatrix}^T
  \begin{bmatrix}
    \phnt{-}\Sig_{\gamma}^{\fnc{D}} & -\Bg \\ -\Bg & \mat{0}
  \end{bmatrix} 
  \begin{bmatrix} \Rgk \Ukp - \bm{u}_{\gamma\fnc{D}} \\ \Dgk \Ukp \end{bmatrix}
  + \sum_{\gamma \subset \Gamma^\fnc{N}} \Vkp^T \Rgk^T \Bg \bm{u}_{\gamma\fnc{N}}.
\end{multline}
The bilinear form \eqref{face-based-form-1} will be our starting point in the
energy stability analysis presented later.

An equivalent face-based bilinear form, which will be useful for the
adjoint analysis, is obtained by using \eqref{eq:identity} in
\eqref{face-based-form-1}.  This produces
\begin{multline} \label{face-based-form-2}
  B_{h}(\bm{u}_h,\bm{v}_h) 
  \equiv \sum_{\kappa \in \fnc{T}_{h}} \Vkp^T \D_{\kappa}^T \Hk \Ukp 
  + \sum_{\kappa \in \fnc{T}_{h}} \Vkp^{T} \Hk \bm{f}_\kappa 
  + \sum_{\gamma \subset \Gamma^\fnc{D}} \Vkp^T \Dgk^T \Bg \bm{u}_{\gamma \fnc{D}}
\\
  - \sum_{\gamma \subset \Gamma^{\fnc{I}}}
  \begin{bmatrix} \Rgk \Vkp \\ \Rgn \Vnu \\ \Dgk \Vkp \\ \Dgn \Vnu \end{bmatrix}^T
  \setlength{\arraycolsep}{5pt}
  \begin{bmatrix}
    \phnt{-}\Siggk{1} & -\Siggk{1} & \Siggk{3} - \Bg & \Siggk{3} \\
    -\Siggn{1} & \phnt{-}\Siggn{1} & \Siggn{3} & \Siggn{3} - \Bg \\
    \phnt{-}\Siggk{2} + \Bg & -\Siggk{2} & \Siggk{4} & \Siggk{4} \\
    -\Siggn{2} & \phnt{-}\Siggn{2} + \Bg & \Siggn{4} & \Siggn{4}
  \end{bmatrix}
  \begin{bmatrix} \Rgk \Ukp \\ \Rgn \Unu \\ \Dgk \Ukp \\ \Dgn \Unu \end{bmatrix} \\
  - \sum_{\gamma \subset \Gamma^\fnc{D}}
  \begin{bmatrix} \Rgk \Vkp \end{bmatrix}^T
  \begin{bmatrix}
    \phnt{-}\Sig_{\gamma}^\fnc{D} & -\Bg \end{bmatrix}
  \begin{bmatrix} \Rgk \Ukp - \bm{u}_{\gamma\fnc{D}} \\ \Dgk \Ukp \end{bmatrix}
  + \sum_{\gamma \subset \Gamma^\fnc{N}} 
  \begin{bmatrix} \Rgk \Vkp \\ \Dgk \Vkp \end{bmatrix}^T
  \begin{bmatrix} \Bg \bm{u}_{\gamma\fnc{N}} \\ -\Bg \Rgk \Ukp \end{bmatrix}.
\end{multline}

\ignore{
To obtain the face-faced weak form, we first left multiply
\eqref{eq:parabolic_SBP} by $\Vkp^{T} \Hk$, where $\Vkp$ is an arbitrary
vector in $\mathbb{R}^{\nk}$.  This produces the element-based weak form of the
discretization: find $\bm{u}_{h} \in \mathbb{R}^{\sum \nk}$ such that, 
$\forall \Vkp \in \mathbb{R}^{\nk}$, 
\begin{equation*}
  \Vkp^{T} \Hk \frac{\diff \bm{u}_{\kappa}}{\diff t} = 
  \Vkp^{T} \Hk \D_\kappa \bm{u}_{\kappa} + \Vkp^{T} \Hk \bm{f}_\kappa 
  - \Vkp^{T} \bm{s}_{\kappa}^{\fnc{I}}\left(\bm{u}_{h}\right) 
  - \Vkp^{T} \bm{s}_{\kappa}^{\fnc{B}}\left(\bm{u}_{h}, \bm{u}_\fnc{D}, \bm{u}_\fnc{N}\right).
\end{equation*}
Next, we sum these element-based weak forms to obtain the equivalent face-based
form of the discretization: find $\bm{u}_{h} \in \mathbb{R}^{\sum \nk}$ such
that
\begin{equation} \label{face-based form}
\SumAll \Vkp^T\Hk\frac{\diff \Ukp}{\diff t} = B(\bm{u}_h, \bm{v}_h),
\qquad \forall \bm{v}_{h} \in \mathbb{R}^{\sum \nk},
\end{equation}
where the bilinear form on the right is defined by
\todo[inline]{JEH: try to eliminate the definition of the following bilinear form, which is not needed for the subsequent analysis}
\begin{equation}
\begin{aligned}
B(\bm{u}_h, \bm{v}_h) &= \SumAll (\bm{v}^T\H\D \bm{u})_{\kappa} + \Vkp^T\H\bm{f}_{\kappa} \\
&- \sum_{\gamma\in\Gamma_{\fnc{I}}}\left(
\LL \Sig^{(1)}\R\bm{v} \RR_{\gamma}^T \LL\R\bm{u} \RR_{\gamma} 
+ \LL \Sig^{(2)}\D\bm{v} \RR_{\gamma}^T \LL\R\bm{u} \RR_{\gamma} 
+ \{  \Sig^{(3)}\R\bm{v} \} _{\gamma}^T \{ \D\bm{u} \} _{\gamma}
+ \{  \Sig^{(4)}\D\bm{v} \} _{\gamma}^T \{ \D\bm{u} \} _{\gamma}\right) \\
&+ \sum_{\gamma \subset \Gamma_\kappa^{\fnc{D}}} \Vkp^T\Dgk^{T}\B_{\gamma} (\Rgk \Ukp -\bm{u}_{\gamma\fnc{D}}) 
- \sum_{\gamma \subset \Gamma_\kappa^{\fnc{D}}}\Vkp^T\Rgk^T\Sigma_{\gamma}^D(\Rgk \Ukp - \bm{u}_{\gamma\fnc{D}}) \\
&- \sum_{\gamma \subset \Gamma_\kappa^{\fnc{N}}} \Vkp^T\Rgk^{T}\B_{\gamma} (\Dgk \Ukp - \bm{u}_{\gamma\fnc{N}})
\end{aligned}
\end{equation}
If $\mat{P}_{\gamma\kappa}$ and $\mat{Q}_{\gamma\kappa}$ are two element-to-face
operators ($\Rgk$ or $\Dgk$), we have
\begin{equation}\label{eq:identity-0}
\begin{aligned}
\SumAll \sum_{\gamma \subset \Gamma_\kappa} \Vkp^T \mat{P}_{\gamma\kappa}^T \B_{\gamma} \mat{Q}_{\gamma\kappa} \Ukp
= \sum_{\gamma\in \Gamma_{\fnc{I}}} \frac{1}{2} \{\mat{P}\bm{v}\}_{\gamma}^T\B_{\gamma}\{\mat{Q}\bm{u}\}_{\gamma}
+ \frac{1}{2} \LL \mat{P}\bm{v}\RR_{\gamma}^T\B_{\gamma}\LL\mat{Q}\bm{u}\RR_{\gamma} \\ + \sum_{\gamma \subset \Gamma} \Vkp^T \mat{P}_{\gamma\kappa}^T \B_{\gamma} \mat{Q}_{\gamma\kappa} \Ukp
\end{aligned}
\end{equation}
This identity is useful to transform an element-based form into a face-based form. For instance, from the properties of diagonal-norm SBP operators, we have the following identity

Let $\mat{P}_{\gamma\kappa} = \Rgk$ and $\mat{Q}_{\gamma\kappa} = \Dgk$ in identity (\ref{eq:identity-0}) and making use of identity (\ref{eq:identity-1}), the face-based form (\ref{face-based form}) can be recast into
\begin{equation} \label{face-based form-1}
\begin{aligned}
\SumAll \Vkp^T\Hk\frac{\diff \Ukp}{\diff t} = B(\bm{u}_h, \bm{v}_h)
\end{aligned}
\end{equation}
\begin{equation}\label{face-based form-2} 
\begin{aligned}
B(\bm{u}_h, \bm{v}_h) &= \SumAll -\Vkp^T \begin{bmatrix}
\Dx \\ \Dy
\end{bmatrix}_{\kappa}^T
\begin{bmatrix}
\H\Lamxx & \H\Lamxy \\
\H\Lamyx & \H\Lamyy
\end{bmatrix}_{\kappa}
\begin{bmatrix}
\Dx \\ \Dy
\end{bmatrix}_{\kappa} \Ukp + \Vkp^T\H\bm{f}_{\kappa} \\
&- \sum_{\gamma\in\Gamma_{\fnc{I}}}(
\LL \Sig^{(1)}\R\bm{v} \RR_{\gamma}^T \LL\R\bm{u} \RR_{\gamma} 
- \LL \Sig^{(2)}\D\bm{v} \RR_{\gamma}^T \LL\R\bm{u} \RR_{\gamma} 
- \{  (\Sig^{(3)}-\frac{1}{2}\B)\R\bm{v} \} _{\gamma}^T \{ \D\bm{u} \} _{\gamma} \\
&- \{  \Sig^{(4)}\D\bm{v} \} _{\gamma}^T \{ \D\bm{u} \} _{\gamma} 
+ \frac{1}{2} \LL \R\bm{v}\RR^T\B_{\gamma}\LL\D\bm{u}\RR) \\
&+ \sum_{\gamma \subset \Gamma_\kappa^{\fnc{D}}} \Vkp^T\Dgk^{T}\B_{\gamma} (\Rgk \Ukp -\bm{u}_{\gamma\fnc{D}}) 
- \sum_{\gamma \subset \Gamma_\kappa^{\fnc{D}}}\Vkp^T\Rgk^T\Sigma_{\gamma}^D(\Rgk \Ukp - \bm{u}_{\gamma\fnc{D}}) \\
&+ \sum_{\gamma \subset \Gamma^\fnc{D}} \Vkp^T \Rgk^T \B_{\gamma} \Dgk \Ukp
+ \sum_{\gamma \subset \Gamma_\kappa^{\fnc{N}}} \Vkp^T\Rgk^{T}\B_{\gamma}\bm{u}_{\gamma\fnc{N}}
\end{aligned}
\end{equation}
Actually (\ref{face-based form-1}) is more handy in the subsequent analysis of energy stability. Furthermore, we can establish through (\ref{face-based form-1}) the connection between current formulation and other methods like symmetric interior penalty Galerkin (SIPG)~\cite{?} and modified scheme Bassi and Rebay (BR2)~\cite{?}. 

The following identity will be useful during the subsequent analysis. It is obtained by swapping $\Ukp$ and $\Vkp$ in (\ref{eq:identity-0}) and subtracting from the result (\ref{eq:identity-0}):
}

\ignore{
%--------------------------------------------------------------------------------
\subsection{Conservation}

From a physical perspective, the conservation property requires no artificial source is introduced into computational domain $\Omega$ because of the numerical flux or penalties. In order to obtain the conservation, we let $\Vkp=\bm{1}$ for all elements in (\ref{face-based form-1}), $\bm{v}=\bm{1}^T$ and make use of $\Rgk\Vkp = \bm{1}$ and $\Dgk\Vkp = \bm{0}$, then integrals over interior faces is reduced to 
\todo[inline]{JEH: reference the appropriate bilinear form definition, so the reader can verify the terms that are non-zero when $\bm{v} = \bm{1}$.  Also, we do not have to take $\Vkp=\bm{1}$ for all elements; we can sum over a subset of elements.  The important thing is that the result of the sum depends only on terms on the face of the subset.}
\begin{equation}
\begin{aligned}
\bm{1}^T\sum_{\gamma\in\Gamma_\fnc{I}} (\Sig_{\gamma\kappa}^{(3)} + \Sig_{\nu\gamma}^{(3)}-\B_{\gamma})(\Dgk\Ukp + \Dgn\Unu)
+ (\Sig_{\gamma\kappa}^{(1)} - \Sig_{\nu\gamma}^{(1)})(\Rgk\Ukp - \Rgn\Unu)
\end{aligned}
\end{equation}
The conservation of numerical flux function requires the above term to vanish, that is, 
\begin{equation}
\Sig_{\gamma\kappa}^{(1)} = \Sig_{\nu\gamma}^{(1)}
\end{equation}
\begin{equation}
\Sig_{\gamma\kappa}^{(3)} + \Sig_{\nu\gamma}^{(3)}=\B_{\gamma}
\end{equation}

%(as far as I know, the numerical flux of primal variable $\hat{u}$ has no clear physical definition for Poisson or Heat equations. However if we follow the definition used in the finite element method, the requirement of conservation of $\hat{u}$ results in $\Sig_{\gamma\kappa}^{(2)}=-\frac{1}{2}\mat{I}$. In the following adjoint consistency analysis, we find that these two constrains are exactly the same as that on adjoint consistency!
%--------------------------------------------------------------------------------
\todo[inline]{JEH: as you point out, the adjoint-consistency analysis produces the same conditions on the sigma matrices.  Therefore, if you want, you could make the Conservation analysis a subsection at the end of the adjoint consistency and describe why adjoint consistency leads to conservation.}
}

\section{Adjoint consistency analysis}\label{sec:adjoint}

It is well known in the finite-element community that adjoint, or dual,
consistency is necessary for obtaining optimal error rates in the $L^2$
norm~\cite{Arnold2002}.  More generally, adjoint consistency leads to
superconvergent (integral) functional estimates~\cite{Babuska1984post,
  Babuska1984postpart2, pierce:2000, giles:2002, lu:2005, hartmann:2007b,
  fidkowski:2011}, which can significantly improve the accuracy of outputs like
lift and drag when using high-order methods.  Given the close connection between
SBP finite-difference methods and the FE methods, it is perhaps not surprising that
classical (\ie tensor-product) SBP discretizations also exhibit superconvergent
functionals when discretized in a dual consistent
manner~\cite{Hicken2011superconvergent, Hicken2014dual}.

For the reasons listed above, adjoint consistency is a property that we would
like our multi-dimensional SBP discretizations to satisfy.  Therefore, in the
following sections, we investigate the constraints on the SAT penalties in
\eqref{eq:parabolic_SBP} that guarantee adjoint consistency.  We begin by
briefly reviewing the dual problem associated with steady version of
\eqref{eq:parabolic}.

%--------------------------------------------------------------------------------
\subsection{A generic adjoint PDE}

An adjoint is defined by the primal PDE and a particular functional of interest.
For the following adjoint-consistency analysis, we consider the linear
functional
\begin{equation}\label{eq:fun}
  \fnc{J}(\fnc{U}) = \int_{\Omega} \fnc{G} \fnc{U} \, \diff \Omega
   +  \int_{\Gamma^\fnc{N}} \fnc{V}_\fnc{N} \fnc{U} \, \diff\Gamma  \\
  -  \int_{\Gamma^\fnc{D}} \fnc{V}_\fnc{D} \hat{\bm{n}} \cdot \left(\Lambda \nabla \fnc{U} \right) \, \diff\Gamma ,
\end{equation}
where $\fnc{G} \in L^{2}(\Omega)$, $\fnc{V}_\fnc{D} \in L^{2}(\Gamma^\fnc{D})$
and $\fnc{V}_\fnc{N} \in L^2(\Gamma^\fnc{N})$.  One can show that the adjoint
PDE corresponding to the steady form of \eqref{eq:parabolic} and \eqref{eq:fun}
is~\cite{Lanczos1961linear}
\begin{equation}\label{eq:adjoint}
  \begin{aligned}
    -&\nabla\cdot\left(\Lambda \nabla\fnc{V} \right) = \fnc{G},\quad &&\forall \; x \in \Omega \\
    &\fnc{V} = \fnc{V}_\fnc{D}, \quad &&\forall \; x \in \Gamma^\fnc{D}  \\
    &\hat{\bm{n}} \cdot \left( \Lambda \nabla \fnc{V}\right) = \fnc{V}_\fnc{N},
    &&\forall \; x \in \Gamma^\fnc{N}.
  \end{aligned}
\end{equation}

\subsection{Functional and adjoint discretization}

We discretize the functional \eqref{eq:fun} as
\begin{multline}\label{eq:fun_SBP}
J_h(\bm{u}_h) := \sum_{\Omega_\kappa \in \fnc{T}_{h}}\bm{g}_{\kappa}^T\Hk\Ukp
    + \sum_{\gamma \subset \Gamma^{\fnc{N}}} \bm{v}_{\gamma \fnc{N}}^T\Bg\Rgk\Ukp
    - \sum_{\gamma \subset \Gamma^{\fnc{D}}} \bm{v}_{\gamma \fnc{D}}^T\Bg\Dgk\Ukp \\
    + \sum_{\gamma \subset \Gamma^{\fnc{D}}} \bm{v}_{\gamma \fnc{D}}^T\Sig_{\gamma}^{\fnc{D}} (\Rgk\Ukp - \bm{u}_{\gamma \fnc{D}}),
\end{multline}
where $\bm{v}_{\gamma \fnc{N}}$ and $\bm{v}_{\gamma \fnc{D}}$ denote
$\fnc{V}_{\fnc{N}}$ and $\fnc{V}_{\fnc{D}}$, respectively, evaluated at the
cubature nodes of the generic face $\gamma$, and
\begin{equation}
\bm{g}_{\kappa}^T = [\fnc{G}(x_0)\  \fnc{G}(x_1)\ \dots\  \fnc{G}(x_{\nk})].
\end{equation}

\begin{remark}
  The first three terms in \eqref{eq:fun_SBP} are direct discretizations of the
  first three terms in \eqref{eq:fun}.  The fourth term in \eqref{eq:fun_SBP} is
  an order $h^{r+1}$ term; the interpolation/extrapolation operators are exact
  for degree $r \geq p$ polynomials, so $\Rgk \Ukp = \bm{u}_{\gamma \fnc{D}} +
  \text{O}(h^{r+1})$.  This last term in $J_h$ is necessary for adjoint
  consistency.
\end{remark}

The discrete adjoint equation is defined implicitly based on $J_h$ and the
discretization~\eqref{eq:parabolic_SBP}.  Specifically, to find the adjoint
equation we form the discrete Lagrangian, take its first variation with respect
to $\bm{u}_{h}$, and set the result to zero.  To this end, we add the face-based
weak form \eqref{face-based-form-2} to $J_h$ and, after some algebraic
manipulation\footnote{In particular, note that the functional and bilinear form
  are scalars, so $J_{h}(\bm{u}_h)^T = J_{h}(\bm{u}_h)$ and
  $B_{h}(\bm{u}_h,\bm{v}_h)^T = B_{h}(\bm{u}_h,\bm{v}_h)$.}, we get the
Lagrangian
\begin{equation*}
  L_{h}(\bm{u}_h, \bm{v}_h) = J_{h}(\bm{u}_h) + B_h(\bm{u}_h, \bm{v}_h)
  = J_{h}^{*}(\bm{v}_h) + B_h^{*}(\bm{v}_h, \bm{u}_h),
\end{equation*}
where the dual form of the functional is defined by
\begin{multline*}
  J_{h}^{*}(\bm{v}_h) 
  = \sum_{\kappa \in \fnc{T}_{h}} \Vkp^{T} \Hk \bm{f}_\kappa 
  + \sum_{\gamma \subset \Gamma^\fnc{N}} \bm{u}_{\gamma \fnc{N}}^T \Bg \Rgk \Vkp
  + \sum_{\gamma \subset \Gamma^\fnc{D}} \bm{u}_{\gamma \fnc{D}}^T \Bg \Dgk \Vkp \\
  + \sum_{\gamma \subset \Gamma^\fnc{D}} \bm{u}_{\gamma \fnc{D}}^T \Sig_{\gamma}^\fnc{D}
  (\Rgk \Vkp - \bm{v}_{\gamma \fnc{D}}),
\end{multline*}
and the adjoint bilinear form is given by
\begin{multline*}
B_{h}^{*}(\bm{v}_h, \bm{u}_h)
= \sum_{\kappa \in \fnc{T}_{h}} \Ukp^T \Hk \D_{\kappa} \Vkp
+ \sum_{\Omega_\kappa \in \fnc{T}_{h}} \Ukp^T \Hk \bm{g}_{\kappa}
   \\
  - \sum_{\gamma \subset \Gamma^{\fnc{I}}}
  \begin{bmatrix} \Rgk \Ukp \\ \Rgn \Unu \\ \Dgk \Ukp \\ \Dgn \Unu \end{bmatrix}^T
  \setlength{\arraycolsep}{5pt}
  \begin{bmatrix}
    \phnt{-}\Siggk{1}       & -\Siggn{1}      & \Siggk{2} + \Bg & -\Siggn{2} \\
    -\Siggk{1}      & \phnt{-}\Siggn{1}       & -\Siggk{2}      & \Siggn{2} + \Bg \\
    \Siggk{3} - \Bg & \phnt{-}\Siggn{3}       & \phnt{-}\Siggk{4}       & \phnt{-}\Siggn{4} \\
    \phnt{-}\Siggk{3}       & \Siggn{3} - \Bg & \phnt{-}\Siggk{4}  & \phnt{-}\Siggn{4} 
  \end{bmatrix}
  \begin{bmatrix} \Rgk \Vkp \\ \Rgn \Vnu \\ \Dgk \Vkp \\ \Dgn \Vnu \end{bmatrix} \\
  - \sum_{\gamma \subset \Gamma^\fnc{D}}
  \begin{bmatrix} \Rgk \Ukp \\ \Dgk \Ukp \end{bmatrix}^T
  \begin{bmatrix}
    \phnt{-}\Sig_{\gamma}^{\fnc{D}} \\ -\Bg \end{bmatrix}
  (\Rgk \Vkp - \bm{v}_{\gamma \fnc{D}})
  - \sum_{\gamma \subset \Gamma^\fnc{N}} \Ukp^T \Rgk^T \Bg(\Dgk \Vkp - \bm{v}_{\gamma \fnc{N}}).
\end{multline*}

As already mentioned, the discrete adjoint equation is found by setting the
first variation of $L_{h}(\bm{u}_h,\bm{v}_h)$ with respect to $\bm{u}_h$ to zero.
Since the primal variable is finite dimensional here, taking the first variation
is equivalent to finding the gradient of $L_{h}$ with respect to $\bm{u}_h$.
Furthermore, we see that $J_{h}^{*}(\bm{v}_h)$ does not depend on $\bm{u}_{h}$,
so we only need to consider the gradient of $B_{h}^{*}(\bm{v}_h,\bm{u}_{h})$.

Taking the gradient of $B_{h}^{*}(\bm{v}_h,\bm{u}_{h})$ with respect to $\Ukp$,
multiplying by $\Hk^{-1}$, and setting the result to zero (\ie setting
the first variation to zero), gives the following element-based strong form of
the adjoint equation:
\begin{equation}\label{eq:adjoint_SBP}
  \Hk^{-1} \frac{\partial B_{h}^{*}}{\partial \Ukp} =
  \D_{\kappa} \Vkp + \bm{g}_{\kappa}
  - \Hk^{-1} (\bm{s}_{\kappa}^{\fnc{I}})^{*}(\bm{v}_h)
  - \Hk^{-1} (\bm{s}_{\kappa}^{\fnc{B}})^{*}(\bm{v}_h,\bm{v}_\fnc{D},\bm{v}_\fnc{N})
  = \bm{0},
\end{equation}
where the adjoint SAT penalties for the interfaces are 
\begin{equation} \label{ad_interface_sat}
  (\bm{s}_{\kappa}^{\fnc{I}})^{*}\left(\bm{v}_{h}\right)
  = \sum_{\gamma \subset \Gamma_\kappa^\fnc{I}}
  \begin{bmatrix} \Rgk^T & \Dgk^T \end{bmatrix}
  \setlength{\arraycolsep}{7pt}
  \setlength{\arraycolsep}{5pt}
  \begin{bmatrix}
    \phnt{-}\Siggk{1}       & -\Siggn{1}      & \Siggk{2} + \Bg & -\Siggn{2} \\
    \Siggk{3} - \Bg & \phnt{-}\Siggn{3}       & \phnt{-}\Siggk{4}       & \phnt{-}\Siggn{4}
  \end{bmatrix}
  \begin{bmatrix} \Rgk \Vkp \\ \Rgn \Vnu \\ \Dgk \Vkp \\ \Dgn \Vnu \end{bmatrix} 
\end{equation}
and the penalties for the boundaries are
\begin{multline*}
  (\bm{s}_{\kappa}^{\fnc{B}})^{*}\left(\bm{u}_{h}, \bm{u}_\fnc{D}, \bm{u}_\fnc{N}\right)
  = \sum_{\gamma \subset \Gamma_\kappa^{\fnc{D}}}
  \begin{bmatrix} \Rgk^T & \Dgk^T \end{bmatrix}
  \begin{bmatrix} \phnt{-}\Sig_{\gamma}^{\fnc{D}} \\ -\Bg \end{bmatrix}
  (\Rgk \Vkp - \bm{v}_{\gamma\fnc{D}}) \\
  + \sum_{\gamma \subset \Gamma_\kappa^{\fnc{N}}} \Rgk^{T}\B_{\gamma} (\Dgk \Vkp - \bm{v}_{\gamma\fnc{N}}).
\end{multline*}

\subsection{Adjoint consistency of the interface SAT}

The sum $\D_{\kappa} \Vkp + \bm{g}_{\kappa}$ in \eqref{eq:adjoint_SBP} is an
order $h^{p+1}$ discretization of the adjoint PDE in \eqref{eq:adjoint}.
Indeed, $\D_{\kappa}$ is the same operator used in the primal discretization.
Furthermore, the boundary SAT, $(\bm{s}_{\kappa}^{\fnc{B}})^{*}$, introduces an
error that is also $\text{O}(h^{p+1})$.  To see this, recall that $\Rgk$ and
$\Dgk$ are exact for polynomials of degree $p$, and $\bm{v}_{\gamma\fnc{D}}$ and
$\bm{v}_{\gamma\fnc{N}}$ are the exact boundary values evaluated at the nodes of
$\gamma$.  Thus, the differences $\Rgk \Vkp - \bm{v}_{\gamma\fnc{D}}$ and $\Dgk \Vkp
- \bm{v}_{\gamma\fnc{N}}$ vanish for polynomial solutions of degree $p$ or less.
Only the interface SATs require further scrutiny to determine adjoint
consistency.

\begin{theorem} \label{thm:adjoint_conditions}
  The primal discretization \eqref{eq:parabolic_SBP} and functional
  discretization \eqref{eq:fun_SBP} are adjoint consistent of order $h^{p+1}$
  provided the exact adjoint $\fnc{V}$ is sufficiently smooth on $\Omega$ and
  the SAT penalty matrices satisfy
  \begin{equation} \label{conditions}
    \begin{alignedat}{2}
      \Siggk{1} &= \Siggn{1}, &\qquad  
      \Siggk{2} + \Siggn{2} &= -\B_{\gamma}, \\
      \Siggk{3} + \Siggn{3} &= \B_{\gamma}, &\qquad
      \Siggk{4} &= \Siggn{4}.
    \end{alignedat}
  \end{equation}
\end{theorem}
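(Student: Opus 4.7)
My plan is to substitute the exact (sufficiently smooth) adjoint $\fnc{V}$ solving \eqref{eq:adjoint} into the discrete adjoint equation \eqref{eq:adjoint_SBP} and verify that the residual is $\text{O}(h^{p+1})$ precisely when the four conditions in \eqref{conditions} hold.  The right-hand side of \eqref{eq:adjoint_SBP} naturally splits into three pieces: the volume contribution $\D_{\kappa}\Vkp + \bm{g}_{\kappa}$, the boundary SAT $(\bm{s}_{\kappa}^{\fnc{B}})^{*}$, and the interface SAT $(\bm{s}_{\kappa}^{\fnc{I}})^{*}$ from \eqref{ad_interface_sat}.  The volume piece is $\text{O}(h^{p+1})$ because $\D_{\kappa}$ inherits the degree-$p$ accuracy of $\Dx$ and $\Dy$ and $\fnc{V}$ solves the continuous adjoint equation pointwise.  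The boundary piece is $\text{O}(h^{p+1})$ by the same reasoning used for the primal boundary SAT in the paragraph preceding the theorem: $\Rgk$ and $\Dgk$ are exact on $\poly{p}(\Omega_\kappa)$, and $\fnc{V}$ matches the Dirichlet/Neumann data by construction of \eqref{eq:adjoint}.  All the work therefore concentrates on the interface SAT.

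On an interior face $\gamma = \Omega_\kappa \cap \Omega_\nu$, smoothness of $\fnc{V}$ makes the value jump $\Rgk\Vkp - \Rgn\Vnu$ and the normal-flux jump $\Dgk\Vkp + \Dgn\Vnu$ both $\text{O}(h^{p+1})$.  The ``$+$'' in the flux jump reflects the fact that $\Dgn$ carries the reversed normal and therefore discretizes $\vec{n}_\nu\!\cdot\!(\Lambda\nabla\fnc{V})$, so the continuous identity $\vec{n}_\kappa\!\cdot\!(\Lambda\nabla\fnc{V}) = -\vec{n}_\nu\!\cdot\!(\Lambda\nabla\fnc{V})$ produces a sum, not a difference.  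The corresponding averages $R_+ := \tfrac{1}{2}(\Rgk\Vkp + \Rgn\Vnu)$ and $D_+ := \tfrac{1}{2}(\Dgk\Vkp - \Dgn\Vnu)$ are $\text{O}(1)$ and consistent, at the face cubature nodes, with $\fnc{V}$ and $\vec{n}_\kappa\!\cdot\!(\Lambda\nabla\fnc{V})$, respectively.  Writing $\Rgk\Vkp = R_+ + R_-$, $\Rgn\Vnu = R_+ - R_-$, $\Dgk\Vkp = D_+ + D_-$, and $\Dgn\Vnu = -D_+ + D_-$ with the half-jumps $R_-,D_-$ both $\text{O}(h^{p+1})$, I will substitute into the $2\times 4$ coefficient block of \eqref{ad_interface_sat} and collect coefficients of each of $R_+, R_-, D_+, D_-$ row by row.

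After collection, the first (i.e.\ $\Rgk^T$) row of the coefficient block multiplies $R_+$ by $\Siggk{1} - \Siggn{1}$ and $D_+$ by $\Siggk{2} + \Siggn{2} + \Bg$; the second (i.e.\ $\Dgk^T$) row multiplies $R_+$ by $\Siggk{3} + \Siggn{3} - \Bg$ and $D_+$ by $\Siggk{4} - \Siggn{4}$.  Since $R_+$ and $D_+$ are independent of the half-jumps and do not vanish for generic smooth $\fnc{V}$, each of these four matrix coefficients must be zero, which is exactly the system \eqref{conditions}.  Once \eqref{conditions} holds, the interface SAT reduces to a combination of terms of the form $\Rgk^T(\text{matrix})R_-$, $\Rgk^T(\text{matrix})D_-$, $\Dgk^T(\text{matrix})R_-$, and $\Dgk^T(\text{matrix})D_-$, and is therefore $\text{O}(h^{p+1})$.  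Summing over all elements and interior faces completes the adjoint-consistency estimate.

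The main obstacle will be sign and normal bookkeeping: because $\Dgn$ is defined with a sign flip on $\Nxg$ and $\Nyg$, it is easy to mis-identify which linear combination of $\Dgk\Vkp$ and $\Dgn\Vnu$ is a jump versus an average, and the row/column structure of the coefficient block in \eqref{ad_interface_sat} must be aligned carefully with $(R_+, R_-, D_+, D_-)$.  A related subtlety is establishing that the four conditions are \emph{necessary} rather than merely sufficient; this follows because smooth $\fnc{V}$ can be chosen so that $R_+$ and $D_+$ take essentially arbitrary nodal values on a given face, forcing their matrix coefficients to vanish identically.  Beyond these points, the argument is a direct linear-algebraic identification together with standard truncation-error reasoning, and no further hypotheses beyond smoothness of $\fnc{V}$ are required.
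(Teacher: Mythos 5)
Your proposal is correct and follows essentially the same route as the paper: isolate the interface SAT, substitute the trace and normal-flux values of the smooth adjoint, and observe that the coefficients of the continuous (average) quantities are exactly the four block matrices $\Siggk{1}-\Siggn{1}$, $\Siggk{2}+\Siggn{2}+\Bg$, $\Siggk{3}+\Siggn{3}-\Bg$, and $\Siggk{4}-\Siggn{4}$, whose vanishing is \eqref{conditions}. The only cosmetic difference is that the paper shortcuts the average/half-jump decomposition by checking that $(\bm{s}_{\kappa}^{\fnc{I}})^{*}$ vanishes identically for $\fnc{V}\in\poly{p}(\Omega_\kappa)$, where $\Rgk\Vkp=\Rgn\Vnu$ and $\Dgk\Vkp=-\Dgn\Vnu$ exactly; your added necessity remark goes beyond what the theorem claims (it is stated only as a sufficient condition) but does not affect correctness.
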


\begin{proof}
  We have already considered the discretization of the spatial derivatives and
  the boundary SATs.  To show that the interface SAT is order $h^{p+1}$, it is
  sufficient to show that $(\bm{s}_{\kappa}^{\fnc{I}})^{*}\left(\bm{v}_{h}\right) =
  \bm{0}$ for polynomial solutions $\fnc{V} \in \poly{p}(\Omega_\kappa)$.  For
  these polynomials, the interpolation/extrapolation and normal-derivative
  operators are exact and we have
  \begin{equation*}
    \Rgk \Vkp = \Rgn \Vnu \equiv \bm{v}_{\gamma},
    \qquad\text{and}\qquad
    \Dgk \Vkp = -\Dgn \Vnu \equiv \bm{v}_{\gamma}'.
  \end{equation*}
  Substituting these identities into the adjoint interface SATs (\ref{ad_interface_sat}) gives
  \begin{equation*}
  (\bm{s}_{\kappa}^{\fnc{I}})^{*}\left(\bm{v}_{h}\right)
  = \sum_{\gamma \subset \Gamma_\kappa^\fnc{I}}
  \begin{bmatrix} \Rgk^T & \Dgk^T \end{bmatrix}
  \setlength{\arraycolsep}{7pt}
  \setlength{\arraycolsep}{5pt}
  \begin{bmatrix}
    \phnt{-}\Siggk{1} -\Siggn{1}   & \Siggk{2} + \Siggn{2} + \Bg \\
    \Siggk{3} + \Siggn{3} - \Bg    & \Siggk{4} - \Siggn{4}
  \end{bmatrix}
  \begin{bmatrix} \bm{v}_\gamma \\ \bm{v}_\gamma' \end{bmatrix}.
  \end{equation*}
  Thus, the adjoint interface penalties vanishes under the conditions
  \eqref{conditions}.\qed
\end{proof}

\begin{remark}
  It is straightforward to show that the conditions~\eqref{conditions} also imply that
  the SBP-SAT discretization is locally conservative, in the sense that
  $\sum_{\kappa \in \fnc{T}_{h}'} \bm{1}^T \Hk \diff \Ukp/\diff t$ depends only
  on the boundary faces of $\fnc{T}_{h}'$ when $\bm{f}_{\kappa} =\bm{0}$, for any
  subset of elements $\fnc{T}_{h}' \subset \fnc{T}_{h}$.
\end{remark}

%--------------------------------------------------------------------------------
\section{Energy analysis} \label{sec:energy_analysis}

%Here we further constrain the SATs by considering time-stability.

%--------------------------------------------------------------------------------
\subsection{Energy analysis of the PDE}

\ignore{
The $L^2$ norm of the solution to \eqref{eq:parabolic} is bounded by the
problem data.  To see this, multiply the PDE by $\fnc{U}$ and integrate over
the domain $\Omega$.  After some manipulation one finds 
\begin{multline*}
  \frac{1}{2} \frac{\diff}{\diff t} \int_{\Omega} \fnc{U}^{2} \, \diff \Omega
  = - \int_{\Omega} \left(\nabla \fnc{U}\right) \cdot \Lambda \left(\nabla \fnc{U} \right) \, \diff \Omega
  + \int_{\Omega} \fnc{U} \fnc{F} \, \diff \Omega \\
  - \int_{\Gamma^\fnc{D}} \fnc{U}_\fnc{D} \hat{\bm{n}} \cdot \left(\Lambda \nabla \fnc{U}\right) \, \diff \Gamma - \int_{\Gamma^\fnc{N}} \fnc{U} \fnc{U}_\fnc{N} \, \diff \Gamma.
\end{multline*}
It is easy to see that the integrand of the first term on the right is
non-negative, so this term leads to a decay in $\|\fnc{U}\|$.  The remaining
terms may produce growth or decay.
}

The difference between two solutions of \eqref{eq:parabolic}, $\fnc{W} = \fnc{U}
- \fnc{V}$, satisfies the homogeneous PDE with $\fnc{F} = 0$, $\fnc{U}_\fnc{D} =
0$, $\fnc{U}_\fnc{N} = 0$, and $\fnc{U}_{0} = 0$.  It follows that $\fnc{W}$
satisfies
\begin{equation*}
  \frac{1}{2} \frac{\diff}{\diff t} \int_{\Omega} \fnc{W}^{2} \, \diff \Omega =
  -\int_{\Omega} \left(\nabla \fnc{W}\right) \cdot \Lambda \left(\nabla
  \fnc{W} \right) \, \diff \Omega \leq 0,
\end{equation*}
and, since $\fnc{W} = 0$ initially, $\fnc{W} = 0$ for all time.  This well-known
proof shows that solutions to \eqref{eq:parabolic} are unique.  We would like to
mimic this proof in the case of the discretization \eqref{eq:parabolic_SBP}.
The following section investigates the conditions on the SAT penalties that make
this possible.

%--------------------------------------------------------------------------------
\subsection{Energy analysis of the discrete homogeneous problem}

The objective of this section is to further constrain the SAT penalty matrices
based on the conditions for discrete energy stability. Before presenting the
conditions for energy stability, we first simplify the penalty matrices based on
the adjoint consistency conditions \eqref{conditions}.  In particular, we will
drop the dependence of the $\Sig^{(1)}$ and $\Sig^{(4)}$ matrices on the
elements:
\begin{equation*}
  \Siggk{1} = \Siggn{1} \equiv \Siggam{1},
  \qquad\text{and}\qquad
  \Siggk{4} = \Siggn{4} \equiv \Siggam{4}.
\end{equation*}
In addition, we will also assume that
\begin{equation} \label{eqn:assumption_on_sigma2}
\Siggk{3} -\Siggk{2}=\Bg.
 \end{equation}
This is not strictly required by the adjoint-consistency analysis, but by the
desire to make the 4x4 block matrix in (\ref{face-based-form-1}) symmetric,
which significantly simplifies the energy analysis; note that the above
condition together with the conditions of Theorem \ref{thm:adjoint_conditions} imply
that $\Siggk{3} = -\Siggn{2}$, $\Siggn{3} = -\Siggk{2}$, and $\Siggn{3} -
\Siggn{2} = \Bg$, which, in turn, imply the symmetry of the 4x4 block matrix in
(\ref{face-based-form-1}).  \ignore{ Based on this assumption and the conditions
  in \eqref{conditions} we have
\begin{equation*}
  \Siggk{2} = \Siggn{2} = -\frac{1}{2}\Bg,
  \qquad\text{and}\qquad
  \Siggk{3} = \Siggn{3} = \frac{1}{2}\Bg.
\end{equation*}
In addition to simplifying the stability analysis, our motivation for this
assumption is that $\Sig^{(2)}$ and $\Sig^{(3)}$ do not help control the
coercivity of the bilinear form.
}
% If the reviewers require further justification, we can show that that \Sig^2
% and \Sig^3 have a one-parameter family that does not change the stability
% analysis.

We will need the following lemma for the stability analysis.  The purpose of the
lemma is to shift the volume terms in the bilinear form $B_h$ to the faces, so
that these terms can contribute to the semi-definiteness of the interface terms.

\begin{lemma} \label{lem:energy_start}
  For each face $\gamma$ of element $\kappa$, let $\alpha_{\gamma \kappa} \ge 0$
  such that $\sum_{\gamma \subset \Gamma_{\kappa}} \alpha_{\gamma
    \kappa} = 1$.  Then, the bilinear form corresponding to the SBP-SAT
  discretization of the homogeneous version of the PDE~\eqref{eq:parabolic} can
  be written as
  %\begin{equation}\label{face-based-form-3}
  %\begin{aligned}
  \begin{multline}\label{face-based-form-3}
    B_{h}(\bm{u}_h,\bm{v}_h) = \\
    - \sum_{\gamma \subset \Gamma^{\fnc{I}}}
  \begin{bmatrix} \Rgk \Vkp \\ \Rgn \Vnu \\ \Gk \Vkp \\ \Gn \Vnu
  \end{bmatrix}^{T} 
  \setlength{\arraycolsep}{5pt}
  \begin{bmatrix} 
    \phnt{-}\Siggam{1} & -\Siggam{1} & \phnt{-}\Siggk{2} \Cgk & -\Siggn{2}\Cgn \\
    -\Siggam{1} & \phnt{-}\Siggam{1} & -\Siggk{2}\Cgk & \phnt{-}\Siggn{2}\Cgn \\
    \phnt{-}\Cgk^T\Siggk{2} & -\Cgk^{T}\Siggk{2} & \alpha_{\gamma \kappa}\Lam_{\kappa}^{*} &  \\
    -\Cgn^T\Siggn{2} & \phnt{-}\Cgn^{T}\Siggn{2} &  & \alpha_{\gamma \nu} \Lam_{\nu}^{*}   
  \end{bmatrix}
  \begin{bmatrix} \Rgk \Ukp \\ \Rgn \Unu \\ \Gk \Ukp \\ \Gn \Unu
  \end{bmatrix} \\
  - \sum_{\gamma \subset \Gamma^{\fnc{I}}}
  \begin{bmatrix} \Dgk \Vkp \\ \Dgn \Vnu \end{bmatrix}^T
  \begin{bmatrix} \Siggam{4} & \Siggam{4} \\ \Siggam{4} & \Siggam{4} \end{bmatrix}
  \begin{bmatrix} \Dgk \Ukp \\ \Dgn \Unu \end{bmatrix} \\
  - \sum_{\gamma \subset \Gamma^{\fnc{D}}}
  \begin{bmatrix} \Rgk \Vkp \\ \Gk \Vkp \end{bmatrix}^{T}
  \setlength{\arraycolsep}{5pt}
  \begin{bmatrix} \phnt{-}\Sig_{\gamma}^{\fnc{D}} & -\Bg \Cgk \\
    -\Cgk^T \Bg & \alpha_{\gamma \kappa}\Lam_{\kappa}^{*} \end{bmatrix}
  \begin{bmatrix} \Rgk \Ukp \\ \Gk \Ukp \end{bmatrix},
  %\end{aligned}
  %\end{equation} 
  \end{multline}
  where we have introduced the matrices
  \begin{alignat*}{2}
    \Gk &= \left\{ \begin{bmatrix} \Lamxx & \Lamxy \\ \Lamyx &
      \Lamyy \end{bmatrix} \begin{bmatrix} \Dx \\ \Dy \end{bmatrix} \right\}_{\kappa},
    &
    \setlength{\arraycolsep}{5pt}    
    \Cgk &= \begin{bmatrix} \Nxg \Rgk & \Nyg \Rgk \end{bmatrix}, \\
    \Gn &= \left\{ \begin{bmatrix} \Lamxx & \Lamxy \\ \Lamyx &
      \Lamyy \end{bmatrix} \begin{bmatrix} \Dx \\ \Dy \end{bmatrix} \right\}_{\nu},
    & 
    \setlength{\arraycolsep}{5pt}
    \Cgn &= - \begin{bmatrix} \Nxg \Rgn & \Nyg \Rgn \end{bmatrix}, \\
    \intertext{and}
    \Lam_{\kappa}^{*} &= \left\{ 
    \begin{bmatrix} \Lamxx & \Lamxy \\ \Lamyx & \Lamyy \end{bmatrix}^{-1}
    \begin{bmatrix} \H & \\ & \H \end{bmatrix} \right\}_{\kappa}, 
    & \qquad
    \Lam_{\nu}^{*} &= \left\{ 
    \begin{bmatrix} \Lamxx & \Lamxy \\ \Lamyx & \Lamyy \end{bmatrix}^{-1}
    \begin{bmatrix} \H & \\ & \H \end{bmatrix} \right\}_{\nu}
  \end{alignat*}
\end{lemma}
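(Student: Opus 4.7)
My plan is to begin from the face-based bilinear form \eqref{face-based-form-1} specialized to the homogeneous problem (so $\bm{f}_\kappa=\bm{0}$, $\bm{u}_{\gamma\fnc{D}}=\bm{0}$, $\bm{u}_{\gamma\fnc{N}}=\bm{0}$) and reorganize it in three stages: simplify the $4\times 4$ interface block, recast the volume term in terms of $\Gk$, and then absorb the normal-derivative operators into the new block structure via $\Dgk=\Cgk\Gk$.

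For the first stage, I would apply Theorem~\ref{thm:adjoint_conditions} together with the extra hypothesis \eqref{eqn:assumption_on_sigma2}.  The combination yields $\Siggk{3}=\Siggk{2}+\Bg=-\Siggn{2}$ and $\Siggn{3}=-\Siggk{2}$, which collapses the $4\times 4$ interface block in \eqref{face-based-form-1} into a symmetric block whose upper-right $2\times 2$ corner contains $\Siggk{2}$ and $-\Siggn{2}$ and whose lower-right $2\times 2$ block is $\Siggam{4}$ in all four entries.  The Dirichlet $2\times 2$ block simplifies analogously.

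For the second stage, writing $\boldsymbol{\nabla}_\kappa=\begin{bmatrix}\Dx\\\Dy\end{bmatrix}_\kappa$, a direct calculation that uses only the symmetry $\Lambda_\kappa^{T}=\Lambda_\kappa$ (which follows from $\lambda_{xy}=\lambda_{yx}$) yields
\begin{equation*}
\Gk^{T}\Lam_\kappa^{*}\Gk
=\boldsymbol{\nabla}_\kappa^{T}\Lambda_\kappa^{T}\Lambda_\kappa^{-1}\,\mathrm{diag}(\H,\H)\,\Lambda_\kappa\boldsymbol{\nabla}_\kappa
=\boldsymbol{\nabla}_\kappa^{T}\,\mathrm{diag}(\H,\H)\,\Lambda_\kappa\boldsymbol{\nabla}_\kappa=\Mk.
\end{equation*}
The partition of unity $\sum_{\gamma\subset\Gamma_\kappa}\alpha_{\gamma\kappa}=1$ then splits $-\Vkp^{T}\Mk\Ukp$ across the faces of $\kappa$ as $-\sum_{\gamma\subset\Gamma_\kappa}\alpha_{\gamma\kappa}(\Gk\Vkp)^{T}\Lam_\kappa^{*}(\Gk\Ukp)$.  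Summing over all $\kappa$, an interior face $\gamma$ accumulates two such pieces, $\alpha_{\gamma\kappa}\Lam_\kappa^{*}$ and $\alpha_{\gamma\nu}\Lam_\nu^{*}$, which become the $(3,3)$ and $(4,4)$ diagonal blocks of the interface matrix in \eqref{face-based-form-3}, while a Dirichlet face receives one piece that becomes the $(2,2)$ block of the Dirichlet matrix; I would take $\alpha_{\gamma\kappa}=0$ on any Neumann face so that no volume contribution appears there, consistent with the absence of a Neumann sum in \eqref{face-based-form-3}.

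For the third stage, the identities $\Dgk=\Cgk\Gk$ and $\Dgn=\Cgn\Gn$ follow immediately from the definitions of $\Dgk$, $\Dgn$, $\Gk$, $\Gn$, with the minus sign built into $\Cgn$ reflecting the orientation reversal of the normal across the interface.  Substituting these into the off-diagonal entries of the simplified interface block converts $\Siggk{2}\Dgk\Ukp$ into $(\Siggk{2}\Cgk)(\Gk\Ukp)$, and similarly on the $\nu$-side, producing the $\pm\Siggk{2}\Cgk$ and $\pm\Siggn{2}\Cgn$ entries demanded by \eqref{face-based-form-3}; the $\Siggam{4}$ entries already act on $\Dgk$ and $\Dgn$ on both sides and require no absorption, so I would pull them out into the separate middle sum.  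The main obstacle is the sign bookkeeping for $\Cgn$: once the $(1,4)$ and $(4,1)$ entries are verified---namely that $\Siggk{2}+\Bg=-\Siggn{2}$ combined with $\Dgn=\Cgn\Gn$ reproduces the stated $-\Siggn{2}\Cgn$ and $-\Cgn^{T}\Siggn{2}$ with the correct signs---the symmetry established in the first stage propagates through the remaining off-diagonals, and assembling the pieces yields \eqref{face-based-form-3}.
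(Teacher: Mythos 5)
Your proof is correct and follows essentially the same route the paper sketches: the two observations the paper highlights ($\Cgk\Gk=\Dgk$, $\Cgn\Gn=\Dgn$, and $\Mk=\sum_{\gamma\subset\Gamma_\kappa}\alpha_{\gamma\kappa}\Gk^{T}\Lam_{\kappa}^{*}\Gk$) are exactly your second and third stages, and your first stage reproduces the block simplification ($\Siggk{3}=-\Siggn{2}$, $\Siggn{3}=-\Siggk{2}$, $\Siggk{3}-\Bg=\Siggk{2}$) that the paper derives in the text immediately preceding the lemma. Your explicit choice of $\alpha_{\gamma\kappa}=0$ on Neumann faces resolves a detail the paper glosses over, and it is consistent with the face-weight formula used in the paper's numerical section.
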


\begin{proof}
  The full proof follows from straightforward algebra and is omitted; however,
  we will highlight two observations that make the connection between
  \eqref{face-based-form-3} and \eqref{face-based-form-1} clearer. First, we note
  that
  \begin{equation*}
    \Cgk \Gk = \Dgk  \qquad\text{and}\qquad
    \Cgn \Gn = \Dgn.
  \end{equation*}
  Second, the elemental matrix $\Mk$ can be decomposed as
  \begin{equation*}
    \Mk = 
    \begin{bmatrix} \Dx \\ \Dy \end{bmatrix}_{\kappa}^T
    \begin{bmatrix}
      \H\Lamxx & \H\Lamxy \\
      \H\Lamyx & \H\Lamyy
    \end{bmatrix}_{\kappa}
    \begin{bmatrix}
      \Dx \\ \Dy
    \end{bmatrix}_{\kappa} =
    \sum_{\gamma \subset \Gamma_{\kappa}} \alpha_{\gamma \kappa}
    \Gk^T \Lam_{\kappa}^{*} \Gk.
  \end{equation*}
  \qed
\end{proof}

We will now state and prove the main energy-stability result.

\begin{theorem} \label{thm:stability-condition}
  The SBP-SAT discretization corresponding to the homogeneous version
  of~\eqref{eq:parabolic} has a non-increasing solution norm, with respect to
  the $\H$ matrix, provided 
  \begin{gather}
    \Siggam{1} - \Siggk{2} \Cgk
    \left(\alpha_{\gamma\kappa}\Lam_{\kappa}^{*}\right)^{-1}\Cgk^T \Siggk{2}
    - \Siggn{2} \Cgn \left(\alpha_{\gamma\nu} \Lam_{\nu}^{*}\right)^{-1}\Cgn^T \Siggn{2} 
    \succeq \mat{0}, \label{eq:Sig1_cond} \\
    \Sig_{\gamma}^{\fnc{D}} - \Bg \Cgk
    \left(\alpha_{\gamma\kappa}\Lam_{\kappa}^{*}\right)^{-1}\Cgk^T \Bg \succeq \mat{0},
    \label{eq:SigD_cond}
  \end{gather}
  and $\Siggam{4} \succeq \mat{0}$, where $\mat{A} \succeq \mat{0}$ indicates
  that $\mat{A}$ is positive semi-definite.
\end{theorem}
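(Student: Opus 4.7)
The plan is to set $\bm{v}_h = \bm{u}_h$ in the symmetric face-based form \eqref{face-based-form-3} of Lemma~\ref{lem:energy_start} and show that $B_h(\bm{u}_h,\bm{u}_h) \le 0$ for the homogeneous problem. The element-based weak form derived from Proposition~\ref{prop:identities} gives $\sum_{\kappa \in \fnc{T}_h} \Ukp^T \Hk \, \diff \Ukp/\diff t = B_h(\bm{u}_h, \bm{u}_h)$, and the left-hand side equals $\tfrac{1}{2}\, \diff/\diff t \sum_\kappa \Ukp^T \Hk \Ukp$, so non-positivity of $B_h$ directly yields a non-increasing $\H$-weighted norm of $\bm{u}_h$.

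After the substitution $\bm{v}_h = \bm{u}_h$, the right-hand side of \eqref{face-based-form-3} decomposes into three independent quadratic contributions, each preceded by a common minus sign: (i) an interior-face $4 \times 4$ block acting on $(\Rgk\Ukp, \Rgn\Unu, \Gk\Ukp, \Gn\Unu)$ and built from $\Siggam{1}$, $\Siggk{2}$, $\Siggn{2}$, together with the block-diagonal piece with entries $\alpha_{\gamma\kappa}\Lam_\kappa^{*}$ and $\alpha_{\gamma\nu}\Lam_\nu^{*}$; (ii) an interior-face $2 \times 2$ block on $(\Dgk\Ukp, \Dgn\Unu)$ built from $\Siggam{4}$; and (iii) a Dirichlet-face $2 \times 2$ block on $(\Rgk\Ukp, \Gk\Ukp)$ built from $\Sig_\gamma^{\fnc{D}}$, $\Bg$, and $\alpha_{\gamma\kappa}\Lam_\kappa^{*}$. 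Because of the common minus sign, the task reduces to showing each of these three block matrices is positive semi-definite.

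For (i), the lower-right $2 \times 2$ principal block is SPD, since $\Lambda$ is SPD (so $\Lambda^{-1}$ is SPD) and $\H$ has positive diagonal entries, and the $\alpha_{\gamma\kappa}$ are positive.  I would therefore invoke the Schur-complement criterion for block PSD matrices. A direct computation of the Schur complement, using $\Siggk{1} = \Siggn{1} = \Siggam{1}$ and the block-diagonal structure of the lower-right block, produces a $2 \times 2$ block matrix whose two diagonal entries equal $T$ and whose two off-diagonal entries equal $-T$, where $T$ is precisely the matrix appearing in \eqref{eq:Sig1_cond}.  Such a matrix is PSD iff $T \succeq \mat{0}$, because its associated quadratic form collapses to $(\Rgk\Ukp - \Rgn\Unu)^T T (\Rgk\Ukp - \Rgn\Unu)$.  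Block (ii) reduces to $(\Dgk\Ukp + \Dgn\Unu)^T \Siggam{4} (\Dgk\Ukp + \Dgn\Unu)$, which is non-negative iff $\Siggam{4} \succeq \mat{0}$.  For (iii), an identical Schur-complement reduction on the $2 \times 2$ block (again using that $\alpha_{\gamma\kappa}\Lam_\kappa^{*}$ is SPD) yields \eqref{eq:SigD_cond}.

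The main obstacle is the careful bookkeeping in step (i): the antisymmetric ``jump-like'' pattern in the off-diagonal couplings coming from $\Siggk{2}$ and $\Siggn{2}$ must interact cleanly with the block-diagonal lower-right block so that the Schur complement collapses to a single scalar PSD condition on the jump direction $\Rgk\Ukp - \Rgn\Unu$, rather than yielding two uncoupled conditions or spurious cross terms between $\kappa$ and $\nu$.  Once this rank-one-in-blocks structure is identified, the reduction to \eqref{eq:Sig1_cond}, \eqref{eq:SigD_cond}, and $\Siggam{4} \succeq \mat{0}$ is immediate, and the theorem follows.
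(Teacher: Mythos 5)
Your proposal is correct and follows essentially the same route as the paper: both set the test function equal to the solution in the symmetric form \eqref{face-based-form-3} of Lemma~\ref{lem:energy_start}, reduce non-positivity of $B_h$ to positive semi-definiteness of the three face-block matrices, and handle the $\Siggam{1}$ and $\Sig_{\gamma}^{\fnc{D}}$ blocks via Schur complements against the SPD blocks $\alpha_{\gamma\kappa}\Lam_{\kappa}^{*}$, $\alpha_{\gamma\nu}\Lam_{\nu}^{*}$. The only cosmetic difference is that the paper identifies the resulting $2\times 2$ block patterns as Kronecker products and argues via their eigenvalues, whereas you collapse the quadratic forms onto the jump and average directions directly; both arguments are equivalent.
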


\begin{proof}
  The SBP-SAT discretization of the homogeneous equation is given by
  \begin{equation*}
    \sum_{\kappa \in \mathcal{T}_{h}} \Vkp^T \Hk \frac{\diff \Wkp}{\diff t}
    = B_{h}(\bm{w}_{h},\bm{v}_{h}),
  \end{equation*}
  where $\B_{h}(\bm{w}_{h},\bm{v}_{h})$ is defined in \eqref{face-based-form-3}.
  If we can show that $B_{h}(\bm{w}_{h},\bm{w}_{h}) \leq 0$ for all
  $\bm{w}_{h}$, then we will have $\sum_{\kappa \in \mathcal{T}_{h}} \Wkp^T
  \Hk \diff \Wkp/\diff t \leq 0$ and the desired result will follow.

  The scalar $B_{h}(\bm{w}_{h},\bm{w}_{h})$ is nonpositive if the symmetric
  matrices in the three sums of \eqref{face-based-form-3} are positive
  semi-definite.  We begin by considering the matrix that appears in the sum
  over the faces of the Dirichlet boundary:
  \begin{equation*}
    \setlength{\arraycolsep}{5pt}
    \begin{bmatrix} \phnt{-}\Sig_{\gamma}^{\fnc{D}} & -\Bg \Cgk \\
      -\Cgk^T \Bg & \alpha_{\gamma \kappa}\Lam_{\kappa}^{*} \end{bmatrix}
    \succeq \mat{0}.
  \end{equation*}
  Since, $\alpha_{\gamma\kappa}\Lam_{\kappa}^{*}$ is positive definite, the
  above matrix is positive semi-definite if the associated Schur complement is
  positive semi-definite:
  \begin{equation*}
    \Sig_{\gamma}^{\fnc{D}} - \Bg \Cgk 
    \left(\alpha_{\gamma\kappa}\Lam_{\kappa}^{*}\right)^{-1}
    \Cgk^T \Bg \succeq \mat{0},
  \end{equation*}
  which is precisely the condition~\eqref{eq:SigD_cond}.

  Next, consider the matrix involving $\Siggam{4}$ in \eqref{face-based-form-3}:
  \begin{equation*}
     \begin{bmatrix} \Siggam{4} & \Siggam{4} \\ \Siggam{4} & \Siggam{4} \end{bmatrix}
     = \begin{bmatrix} 1 & 1 \\ 1 & 1 \end{bmatrix} \otimes \Siggam{4},
  \end{equation*}
  where $\otimes$ denotes the Kronecker product.  Since the eigenvalues of
  $\left[\begin{smallmatrix} 1 & 1 \\ 1 & 1 \end{smallmatrix}\right]$ are zero
  and two, it follows from the spectral theory of Kronecker products that the
  eigenvalues of the above matrix are twice the eigenvalues of $\Siggam{4}$ and
  $n_{\gamma}$ zeros.  Thus, we require that $\Siggam{4} \succeq \mat{0}$.

  Finally, we analyze the matrix containing $\Siggam{1}$.  Similar to the matrix
  in the boundary-face sum, we make use of the fact that $\alpha_{\gamma\kappa}
  \Lam_{\kappa}^{*}$ and $\alpha_{\gamma\nu} \Lam_{\nu}^{*}$ are positive
  definite to conclude that the $4\times 4$ block matrix is positive
  semi-definite if the Schur complement is also positive semi-definite, \ie
  \begin{equation*}
    \begin{bmatrix} \phnt{-}1 & -1 \\ -1 & \phnt{-}1 \end{bmatrix}\otimes
    \left\{ 
    \Siggam{1} - 
    \begin{bmatrix} \Siggk{2} \Cgk & \Siggn{2} \Cgn \end{bmatrix}
    \begin{bmatrix} (\alpha_{\gamma\kappa}\Lam_{\kappa}^{*})^{-1} & \\
      & (\alpha_{\gamma\nu} \Lam_{\nu}^{*})^{-1} \end{bmatrix}
    \begin{bmatrix} \Cgk^T \Siggk{2} \\ \Cgn^T \Siggn{2} \end{bmatrix}
    \right\} \succeq \mat{0}.
  \end{equation*}
  The eigenvalues of $\left[\begin{smallmatrix} 1 & -1 \\ -1 &
      1 \end{smallmatrix}\right]$ are zero and two; thus, to ensure that the
  above Kronecker product is positive semi-definite, we must require that
  \begin{equation*}
    \Siggam{1} - \Siggk{2} \Cgk
    \left(\alpha_{\gamma\kappa}\Lam_{\kappa}^{*}\right)^{-1}\Cgk^T \Siggk{2}
    - \Siggn{2} \Cgn \left(\alpha_{\gamma\nu} \Lam_{\nu}^{*}\right)^{-1}\Cgn^T \Siggn{2} 
    \succeq \mat{0},
  \end{equation*}
  which is condition \eqref{eq:Sig1_cond}.\qed
\end{proof}

\ignore{
\todo[inline]{JEH: should we prove that $\Lam_{\kappa}^{*}$ is symmetric
  positive definite?~\cite{MEENAKSHI1999}}
}
%--------------------------------------------------------------------------------
\section{Generalization of existing methods}\label{sec:generalize}

In Sections \ref{sec:adjoint} and \ref{sec:energy_analysis} we obtained
sufficient conditions that ensure adjoint consistency and energy stability. In
this section we show that these conditions can be used to recover two popular
interior penalty methods used in FE methods, namely, the modified scheme of
Bassi and Rebay (BR2)~\cite{Bassi2005} and the symmetric interior penalty method
(SIPG)~\cite{hartmann:2007b, Shahbazi2009multigrid}.

While the stability conditions of Theorem \ref{thm:stability-condition} depend
on $\Siggk{2}$ and $\Siggn{2}$, there remains considerable flexibility in the
values adopted for these matrices, provided they satisfy
\eqref{eqn:assumption_on_sigma2} and the conditions in Theorem
\ref{thm:adjoint_conditions}.  Additionally, although a positive semi-definite
$\Siggk{4}$ may influence the accuracy and continuity of solutions, it is not
necessary nor is it sufficient to guarantee coercivity of the bilinear
form.  Accordingly, a simple and effective choice for the penalty matrices is
\begin{equation*}
\begin{aligned}
\Siggk{3} &= -\Siggk{2} = \frac{1}{2}\Bg, \\
\Siggk{4} &= \Siggn{4} = 0,
\end{aligned}
\end{equation*}
which are the values used for the remainder of the paper.  Note that other
choices are possible that lead to asymmetric or one-sided schemes, such as the
local discontinuous Galerkin scheme~\cite{Cockburn1998_LDG}, but these are not
considered in this paper.

We now investigate two specific expressions for $\Siggam{1}$ and
$\Sig_{\gamma}^{\fnc{D}}$ and show how these are related to BR2 and SIPG.

\subsection{The modified scheme of Bassi and Rebay (BR2)}\label{sec:BR2}

Based on the stability analysis in Section~\ref{sec:energy_analysis},
specifically Theorem~\ref{thm:stability-condition}, a straightforward choice for
the SAT penalties is
\begin{align}
  \Siggam{1} &= \frac{1}{4} \Bg \left[ \Cgk \left(\alpha_{\gamma\kappa}\Lam_{\kappa}^{*}\right)^{-1}\Cgk^T 
  + \Cgn \left(\alpha_{\gamma\nu} \Lam_{\nu}^{*}\right)^{-1}\Cgn^T \right] \Bg, \label{eq:SAT-BR2} \\
  \Sig_{\gamma}^{\fnc{D}} &= \Bg \Cgk \left(\alpha_{\gamma\kappa}\Lam_{\kappa}^{*}\right)^{-1}\Cgk^T \Bg \label{eq:SAT-BR2-boundary}.
\end{align}

We now show that the above penalty matrices generalize the modified scheme of
Bassi and Rebay~\cite{Bassi2005} to multidimensional SBP discretizations.  For
ease of exposition, we will consider the scalar constant-coefficient diffusion
case, that is
\begin{equation*}
  \Lambda = \begin{bmatrix} \lambda_{xx} & \lambda_{xy} \\ \lambda_{yx} & \lambda_{yy}
\end{bmatrix} = \lambda \begin{bmatrix} 1 & \\ & 1 \end{bmatrix}.
\end{equation*}
A similar analysis of problems with a spatially varying tensor $\Lambda$ gives the same conclusion. 
In addition, we will only focus on the interface penalty of BR2, since the
relationship to $\Sig_{\gamma}^{\fnc{D}}$ is similar.

The penalties in the BR2 method that correspond with the matrix $\Siggam{1}$ are of
the form
\begin{equation}\label{eq:BR2}
  C_{\text{BR2}} \int_{\partial \Omega_{\kappa} \cap \Gamma^\fnc{I}}
  \fnc{V}_{\kappa} \frac{1}{2} \left[ n_x \left(\fnc{L}_{x,\kappa}^{\gamma} +
    \fnc{L}_{x,\nu}^{\gamma}\right) + n_y \left(\fnc{L}_{y,\kappa}^{\gamma} +
    \fnc{L}_{y,\nu}^{\gamma}\right) \right] \, \diff \Gamma,
\end{equation}
where $C_{\text{BR2}}$ is a positive constant, $\fnc{U}_{\kappa}$ and
$\fnc{U}_{\nu}$ denote the finite-dimensional solution on the elements
$\Omega_\kappa$ and $\Omega_\nu$, respectively, and $\fnc{V}_{\kappa}$ denotes the test
function on $\Omega_\kappa$.  We have also introduced the \emph{scalar} lifting
operators $\fnc{L}_{x,\kappa}^{\gamma}$ and $\fnc{L}_{y,\kappa}^{\gamma}$, which
are defined by the variational statements
\begin{align*}
  \int_{\Omega_{\kappa}} \fnc{V}_{\kappa} \fnc{L}_{x,\kappa}^{\gamma} \, \diff\Omega
  &= \frac{1}{2} \int_{\gamma} \fnc{V}_{\kappa} \lambda (\fnc{U}_{\kappa} - \fnc{U}_{\nu}) n_x
  \, \diff \Gamma,
  \qquad\forall \fnc{V}_{\kappa} \in \poly{p}(\Omega_\kappa), \\
  \text{and}\qquad
  \int_{\Omega_{\kappa}} \fnc{V}_{\kappa} \fnc{L}_{y,\kappa}^{\gamma} \, \diff\Omega
  &= \frac{1}{2} \int_{\gamma} \fnc{V}_{\kappa} \lambda (\fnc{U}_{\kappa} - \fnc{U}_{\nu}) n_y
  \, \diff \Gamma,
  \qquad\forall \fnc{V}_{\kappa} \in \poly{p}(\Omega_\kappa).
\end{align*}
The multidimensional SBP discretization of these two variational 
statements is
\begin{align*}
  \Vkp^T \Hk \Lxk &= \frac{\lambda}{2} \Vkp^T \Rgk^T \Bg \Nxg (\Rgk \Ukp - \Rgn \Unu), 
  \qquad\forall\; \Vkp \in \mathbb{R}^{n_{\kappa}}, \\
\text{and}\qquad
  \Vkp^T \Hk \Lyk &= \frac{\lambda}{2} \Vkp^T \Rgk^T \Bg \Nyg (\Rgk \Ukp - \Rgn \Unu),
  \qquad\forall\; \Vkp \in \mathbb{R}^{n_{\kappa}},
\end{align*}
where $\Lxk \in \mathbb{R}^{n_{\kappa}}$ and $\Lyk \in \mathbb{R}^{n_{\kappa}}$
are the discrete lifting operators.  Choosing $\Vkp$ appropriately, we obtain
the explicit expressions
\begin{align*}
  \Lxk &= \frac{\lambda}{2} \Hk^{-1} \Rgk^T \Bg \Nxg (\Rgk \Ukp - \Rgn \Unu), \\
  \text{and}\qquad
  \Lyk &= \frac{\lambda}{2} \Hk^{-1} \Rgk^T \Bg \Nyg (\Rgk \Ukp - \Rgn \Unu).
\end{align*}

Next, we turn to the SBP discretization of the BR2 penalty~\eqref{eq:BR2}.
Using the above expressions for $\Lxk$ and $\Lyk$, and the analogous ones for
$\Lxn$ and $\Lyn$, we obtain the discretization
\begin{align*}
  &\frac{C_{\text{BR2}}}{2} \Vkp^T \Rgk^T \Bg \left[ 
    \Nxg (\Rgk\Lxk + \Rgn\Lxn) + \Nyg (\Rgk\Lyk + \Rgn\Lyn) \right] \\
%  &= \frac{C_{\text{BR2}}}{2} \Vkp^T \Rgk^T \Bg \left[
%    (\Nxg \Rgk\Lxk + \Nyg \Rgk\Lyk) + (\Nxg \Rgn\Lxn + \Nyg\Rgn\Lyn)\right] \\
  &= \frac{C_{\text{BR2}}}{4} \Vkp^T \Rgk^T \Bg \lambda \left[
    (\Nxg \Rgk \Hk^{-1} \Rgk^T \Bg \Nxg + \Nyg \Rgk \Hk^{-1} \Rgk^T \Bg \Nyg) 
    \right. \\
  &\phantom{=}\left. + 
    (\Nxg \Rgn \H_{\nu}^{-1} \Rgn^T \Bg \Nxg + \Nyg \Rgn \H_{\nu}^{-1} \Rgn^T \Bg \Nyg)
    \right] (\Rgk \Ukp - \Rgn \Unu)  \\
  &= \Vkp^T \Rgk^T \Sig_{\text{BR2}} (\Rgk \Ukp - \Rgn \Unu),
\end{align*}
where 
\begin{align*}
  \Sig_{\text{BR2}} & =\frac{C_{\text{BR2}}}{4} 
  \Bg \lambda \left[
    (\Nxg \Rgk \Hk^{-1} \Rgk^T \Bg \Nxg + \Nyg \Rgk \Hk^{-1} \Rgk^T \Bg \Nyg) \right. \\
    &\phantom{= \frac{C_{\text{BR2}}}{4}\Bg}
    \left. + 
    (\Nxg \Rgn \H_{\nu}^{-1} \Rgn^T \Bg \Nxg + \Nyg \Rgn \H_{\nu}^{-1} \Rgn^T \Bg \Nyg)
    \right] \\
  &= \frac{C_{\text{BR2}}}{4} \Bg \left\{ \begin{bmatrix} \Nxg \Rgk & \Nyg \Rgk \end{bmatrix}
  \begin{bmatrix} \lambda \Hk^{-1} & \\ & \lambda \Hk^{-1} \end{bmatrix}
  \begin{bmatrix} \Rgk^T \Nxg \\ \Rgk^T \Nyg \end{bmatrix} \right. \\
  &\phantom{= C_{\text{BR2}}}
  \left. + 
  \begin{bmatrix} \Nxg \Rgn & \Nyg \Rgn \end{bmatrix}
  \begin{bmatrix} \lambda \H_{\nu}^{-1} & \\ & \lambda \H_{\nu}^{-1} \end{bmatrix}
  \begin{bmatrix} \Rgn^T \Nxg \\ \Rgn^T \Nyg \end{bmatrix} \right\} \Bg \\
  &= \frac{C_{\text{BR2}}}{4} \Bg \left[ \Cgk (\Lam_{\kappa}^{*})^{-1} \Cgk^T 
  + \Cgn (\Lam_{\nu}^{*})^{-1} \Cgn^T \right] \Bg.
\end{align*}
In the above derivation, we reversed the direction of $\Nxg$ and $\Nyg$ for
element $\Omega_\nu$, and we used $\Bg \Nxg = \Nxg \Bg$ and $\Bg \Nyg = \Nyg \Bg$.

From the above expression for $\Sig_{\text{BR2}}$, we see that
\eqref{eq:SAT-BR2} is indeed the SBP generalization of the BR2 penalty
\eqref{eq:BR2} with $C_{\text{BR2}} = \alpha_{\gamma\kappa}^{-1}$.  In the
results section, we will refer to this scheme as SAT-BR2.

\begin{remark}
  To the best of our knowledge, this is the first time the SBP generalization of
  the BR2 scheme has been presented.  This is significant, because it provides a
  means of implementing the popular BR2 scheme with multidimensional SBP
  operators that do not have underlying basis functions.
\end{remark}

\subsection{The symmetric interior penalty method (SIPG)}\label{sec:SIPG}

A disadvantage of the SAT-BR2 penalties is that their $\Siggam{1}$ and
$\Sig_{\gamma}^{\fnc{D}}$ matrices can be computationally expensive to evaluate.  This
is not an issue for linear problems --- since these matrices can be pre-computed
and stored if sufficient memory is available --- but it can be an issue in
nonlinear problems when the diffusion coefficient(s) depend on the state.

In contrast to dense penalty matrices, the symmetric interior penalty method
(SIPG)~\cite {Hartmann2005, hartmann:2007b, Shahbazi2005explicit, Shahbazi2009multigrid} uses diagonal (or block diagonal) $\Siggam{1}$ and
$\Sig_\gamma^{\fnc{D}}$ with a single parameter that is chosen to be sufficiently
large to ensure stability.  In this section, we demonstrate how the
multidimensional SBP-SAT generalization of SIPG can be derived from the conditions in
Theorem~\ref{thm:stability-condition}.  First, we need the following lemma.

\begin{lemma}\label{lem:delta-bound}
  Let $(\lambda_{\max})_{\kappa}$ be the largest eigenvalue of
  $\left[\begin{smallmatrix} \Lamxx & \Lamxy \\ \Lamyx &
      \Lamyy \end{smallmatrix}\right]_{\kappa}$ and let $\| \mat{A} \|_{2} =
  \sqrt{\rho(\mat{A}\mat{A}^T)}$ denote the matrix 2-norm.  Then
  \begin{equation}
    \Bg \Cgk \left(\alpha_{\gamma\kappa}\Lam_{\kappa}^{*}\right)^{-1}\Cgk^T \Bg
    \preceq 
    \frac{(\lambda_{\max})_\kappa 
      \| \Bg^{\frac{1}{2}} \Rgk \Hk^{-\frac{1}{2}} \|_{2}^{2}}%
         {\alpha_{\gamma\kappa}} \Bg.
    \label{eq:delta-bound}
  \end{equation}
\end{lemma}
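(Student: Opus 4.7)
The plan is to reduce the operator-inequality to an ordinary $2$-norm bound by pre- and post-multiplying by $\Bg^{-1/2}$. Specifically, the claim is equivalent to
\begin{equation*}
  \Bg^{1/2}\Cgk\bigl(\alpha_{\gamma\kappa}\Lam_\kappa^*\bigr)^{-1}\Cgk^{T}\Bg^{1/2}
  \preceq \frac{(\lambda_{\max})_\kappa\,\|\Bg^{1/2}\Rgk\Hk^{-1/2}\|_2^2}{\alpha_{\gamma\kappa}}\,\mat{I},
\end{equation*}
so it will suffice to bound a quadratic form $w^{T}(\cdot)w$ uniformly in $\|w\|^{2}$.

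The first step is to exploit the diagonal structure of $\H$ and of each block of $\Lambda$. Since all of these matrices commute, I can factor
\begin{equation*}
  (\Lam_\kappa^*)^{-1}
  = \begin{bmatrix}\Hk^{-1/2}& \\ &\Hk^{-1/2}\end{bmatrix}
    \begin{bmatrix}\Lamxx & \Lamxy\\ \Lamyx & \Lamyy\end{bmatrix}_\kappa
    \begin{bmatrix}\Hk^{-1/2}& \\ &\Hk^{-1/2}\end{bmatrix}.
\end{equation*}
Substituting, the quadratic form $w^{T}\Bg^{1/2}\Cgk(\Lam_\kappa^*)^{-1}\Cgk^{T}\Bg^{1/2}w$ can be written as $y^{T}\Lambda_\kappa y$, where
\begin{equation*}
  y = \begin{bmatrix} A\,\Nxg w \\ A\,\Nyg w\end{bmatrix},
  \qquad
  A \equiv \Hk^{-1/2}\Rgk^{T}\Bg^{1/2},
\end{equation*}
and I have used that the diagonal matrices $\Bg^{1/2}$ and $\Nxg,\Nyg$ commute.

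The second step is to bound each piece. Because $\Lambda_\kappa$ is symmetric positive definite, $y^{T}\Lambda_\kappa y \le (\lambda_{\max})_\kappa\|y\|^{2}$. For the Euclidean norm of $y$, I apply the operator-norm bound twice and use $\|A^{T}\|_2 = \|A\|_2 = \|\Bg^{1/2}\Rgk\Hk^{-1/2}\|_2$ to obtain
\begin{equation*}
  \|y\|^2 \le \|A\|_2^{2}\bigl(\|\Nxg w\|^{2}+\|\Nyg w\|^{2}\bigr).
\end{equation*}
The final ingredient is the unit-normal identity $\Nxg^{2}+\Nyg^{2}=\mat{I}$, which follows because $(n_{x,j},n_{y,j})$ is a unit vector at each cubature node on $\gamma$; this collapses $\|\Nxg w\|^{2}+\|\Nyg w\|^{2}$ to $\|w\|^{2}$. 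Assembling the estimates and multiplying by $\Bg^{1/2}$ on each side (which preserves the $\preceq$ ordering) yields~\eqref{eq:delta-bound} after dividing by $\alpha_{\gamma\kappa}$.

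There is no serious obstacle here beyond bookkeeping; the only genuine observation is that the block diagonal factorization of $(\Lam_\kappa^*)^{-1}$ is permissible because $\H$ is diagonal and each $\Lambda$-block is diagonal, so everything commutes and the square-root splitting is unambiguous. If $\H$ were not diagonal, or if $\Lambda$ had non-diagonal coefficient matrices, this factorization would fail and a more delicate argument would be required.
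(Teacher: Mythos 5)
Your proof is correct and follows essentially the same route as the paper's: both arguments bound the quadratic form by factoring $\H^{-1}$ symmetrically (valid because all blocks are diagonal and commute), pulling out $(\lambda_{\max})_\kappa$ from the $\Lambda$ factor, bounding the interpolation part by $\|\Bg^{1/2}\Rgk\Hk^{-1/2}\|_2^2$, and collapsing the normal components via $\Nxg^2+\Nyg^2=\mat{I}$. The only difference is cosmetic bookkeeping --- you first congruence-transform by $\Bg^{-1/2}$ to reduce the target to a multiple of the identity, whereas the paper keeps the $\Bg^{1/2}$ factors in place throughout the chain of inequalities.
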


\begin{proof}
  We recall a few facts that will be useful.  The matrices $\Bg$, $\Nxg$, and
  $\Nyg$ are diagonal; therefore, they commute with one another.  Furthermore,
  the diagonal matrix $\Bg$ holds positive cubature weights on its diagonal, so
  it can be factored as $\Bg = \Bg^{\frac{1}{2}} \Bg^{\frac{1}{2}}$.  For
  similar reasons we can write $\Hk = \Hk^{\frac{1}{2}}\Hk^{\frac{1}{2}}$.
  Finally, $\Nxg\Nxg^T + \Nyg\Nyg^T = \mat{I}$, since $\Nxg$ and $\Nyg$ hold the
  $x$ and $y$ components of the unit normal along $\gamma$.

  Now, let $\Ugam \in \mathbb{R}^{n_{\gamma}}$ be an arbitrary solution on the
  nodes of the face $\gamma$.  Then products with $\Ugam$ and the matrix on the
  left of \eqref{eq:delta-bound} can be bounded as follows:
  \begin{align*}
    &\Ugam^T \Bg \Cgk \left(\alpha_{\gamma\kappa}\Lam_{\kappa}^{*}\right)^{-1}\Cgk^T \Bg \Ugam \\
    &= 
    \frac{1}{\alpha_{\gamma\kappa}} 
    \Ugam^T \Bg \begin{bmatrix} \Nxg\Rgk & \Nyg\Rgk \end{bmatrix}
    \begin{bmatrix} \Hk^{-1} & \\ & \Hk^{-1} \end{bmatrix}
    \begin{bmatrix} \Lamxx & \Lamxy \\ \Lamyx & \Lamyy \end{bmatrix}
    \begin{bmatrix} \Rgk^T \Nxg^T \\ \Rgk^T \Nyg^T \end{bmatrix} \Bg \Ugam \\
    &\leq \frac{(\lambda_{\max})_{\kappa}}{\alpha_{\gamma\kappa}} 
    \Ugam^T\Bg^{\frac{1}{2}}
    \begin{bmatrix} \Nxg & \Nyg \end{bmatrix}
    \left\{ \begin{bmatrix} 1 & \\ & 1 \end{bmatrix} \otimes \left(
    \Bg^{\frac{1}{2}} \Rgk \Hk^{-1} \Rgk^T \Bg^{\frac{1}{2}} \right) \right\}
    \begin{bmatrix} \Nxg^T \\ \Nyg^T \end{bmatrix} \Bg^{\frac{1}{2}} \Ugam \\
    &\leq \frac{(\lambda_{\max})_{\kappa} \| \Bg^{\frac{1}{2}} \Rgk \Hk^{-\frac{1}{2}} \|_{2}^{2}}{\alpha_{\gamma\kappa}} 
    \Ugam^T \Bg^{\frac{1}{2}} 
    \begin{bmatrix} \Nxg & \Nyg \end{bmatrix} 
    \begin{bmatrix} \Nxg^T \\ \Nyg^T \end{bmatrix} \Bg^{\frac{1}{2}} \Ugam \\
    &\leq \frac{(\lambda_{\max})_{\kappa} 
      \| \Bg^{\frac{1}{2}} \Rgk \Hk^{-\frac{1}{2}} \|_{2}^{2}}{\alpha_{\gamma\kappa}}
    \Ugam^T \Bg \Ugam.
  \end{align*}
  The desired result follows from the above inequality, since $\bm{u}_{\gamma}$
  is arbitrary. \qed
\end{proof}

We can now state the SAT-SIPG penalties that lead to energy stability.

\begin{theorem}
  The discretization~\eqref{eq:parabolic_SBP} is energy stable if 
  \begin{equation} \label{eq:SIPG}
    \Siggam{1} = \delta_{\gamma}^{(1)} \Bg,
    \qquad\text{and}\qquad
    \Sig_{\gamma}^{\fnc{D}} = \delta_{\gamma}^{\fnc{D}} \Bg,
  \end{equation}
  where 
  \begin{align*}
    \delta_{\gamma}^{(1)} &= \frac{(\lambda_{\max})_\kappa 
      \| \Bg^{\frac{1}{2}} \Rgk \Hk^{-\frac{1}{2}} \|_{2}^{2}}%
          {4 \alpha_{\gamma\kappa}} 
          + \frac{(\lambda_{\max})_\nu 
            \| \Bg^{\frac{1}{2}} \Rgn \H_{\nu}^{-\frac{1}{2}} \|_{2}^{2}}%
          {4 \alpha_{\gamma\nu}}, \\ 
          \delta_{\gamma}^\fnc{D} &= \frac{(\lambda_{\max})_\kappa 
      \| \Bg^{\frac{1}{2}} \Rgk \Hk^{-\frac{1}{2}} \|_{2}^{2}}%
          {\alpha_{\gamma\kappa}} .
  \end{align*}
\end{theorem}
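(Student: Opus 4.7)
The plan is to verify directly the three semi-definiteness hypotheses of Theorem~\ref{thm:stability-condition} using the SIPG choices~\eqref{eq:SIPG}, together with the penalties $\Siggk{2}=\Siggn{2}=-\tfrac{1}{2}\Bg$ and $\Siggam{4}=\mat{0}$ fixed at the beginning of Section~\ref{sec:generalize}. The condition $\Siggam{4}\succeq\mat{0}$ is immediate since $\Siggam{4}=\mat{0}$, so only the Dirichlet-boundary condition~\eqref{eq:SigD_cond} and the interior-face condition~\eqref{eq:Sig1_cond} remain to be checked.

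First I would handle the Dirichlet-face condition, since it is the simpler of the two and already matches the form controlled by Lemma~\ref{lem:delta-bound}. Substituting $\Sig_\gamma^{\fnc{D}}=\delta_\gamma^{\fnc{D}}\Bg$ into~\eqref{eq:SigD_cond}, the required inequality becomes
\begin{equation*}
\delta_\gamma^{\fnc{D}}\Bg \;\succeq\; \Bg\,\Cgk\bigl(\alpha_{\gamma\kappa}\Lam_\kappa^*\bigr)^{-1}\Cgk^T\Bg,
\end{equation*}
and Lemma~\ref{lem:delta-bound} provides exactly the upper bound
$\bigl((\lambda_{\max})_\kappa\|\Bg^{1/2}\Rgk\Hk^{-1/2}\|_2^2/\alpha_{\gamma\kappa}\bigr)\Bg$ on the right-hand side. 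Hence the stated $\delta_\gamma^{\fnc{D}}$ makes the Schur complement positive semi-definite and~\eqref{eq:SigD_cond} holds.

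Next I would treat the interface condition. With $\Siggk{2}=\Siggn{2}=-\tfrac{1}{2}\Bg$, the middle two terms of~\eqref{eq:Sig1_cond} become
\begin{equation*}
\tfrac{1}{4}\Bg\,\Cgk\bigl(\alpha_{\gamma\kappa}\Lam_\kappa^*\bigr)^{-1}\Cgk^T\Bg \;+\; \tfrac{1}{4}\Bg\,\Cgn\bigl(\alpha_{\gamma\nu}\Lam_\nu^*\bigr)^{-1}\Cgn^T\Bg,
\end{equation*}
so I would apply Lemma~\ref{lem:delta-bound} twice, once to each term (the second application uses $\nu$ in place of $\kappa$, which is valid because the lemma's derivation is entirely local to one element). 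Adding the two resulting bounds and dividing by $4$ yields an upper bound of $\delta_\gamma^{(1)}\Bg$, with $\delta_\gamma^{(1)}$ precisely as stated. Therefore $\Siggam{1}=\delta_\gamma^{(1)}\Bg$ dominates the sum, giving~\eqref{eq:Sig1_cond}, and all hypotheses of Theorem~\ref{thm:stability-condition} are satisfied.

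No step here is really an obstacle: the only non-mechanical point is making sure that $\Siggk{2},\Siggn{2}$ are fixed as in Section~\ref{sec:generalize} so that the quadratic factor $\Siggk{2}(\cdot)\Siggk{2}^T$ in~\eqref{eq:Sig1_cond} reduces to $\tfrac{1}{4}\Bg(\cdot)\Bg$, which is precisely the form bounded by Lemma~\ref{lem:delta-bound}. The factor $\tfrac{1}{4}$ from this reduction is what produces the $1/4$ in the definition of $\delta_\gamma^{(1)}$ and explains why $\delta_\gamma^{\fnc{D}}$ does not inherit such a factor.
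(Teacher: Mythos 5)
Your proposal is correct and is essentially the paper's own argument written out in full: the paper's proof is a one-line appeal to Lemma~\ref{lem:delta-bound}, the conditions of Theorem~\ref{thm:stability-condition}, and the choice $\Siggk{3}=-\Siggk{2}=\tfrac{1}{2}\Bg$, which is exactly the chain of substitutions you carry out. Your explicit remark on where the factor $\tfrac{1}{4}$ comes from (and why it is absent in $\delta_\gamma^{\fnc{D}}$) is a faithful elaboration, not a deviation.
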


\begin{proof}
  The proof follows from Lemma~\ref{lem:delta-bound}, the conditions in
  Theorem~\ref{thm:stability-condition}, and the aforementioned choice
  $\Siggk{3} = -\Siggk{2} = 1/2\Bg$.\qed
\end{proof}

\begin{remark}
    The SAT-SIPG penalties require that we compute $(\lambda_{\max})_{\kappa}$ on
    each element.  For nonlinear problems, we recommend replacing this value with
    an estimate for the upper bound of the spectral radius of the tensor $\Lambda$
    over all nodes of $\kappa$; otherwise the computational advantage of SAT-SIPG
    over SAT-BR2 will be compromised. 
\end{remark}

\begin{remark}
    To the best of our knowledge, this is the first time the SIPG penalty has been related to BR2 using straightforward matrix analysis. The approximations that lead to SIPG produce a more conservative bound, on the one hand, but a cheaper penalty, on the other hand. 
    
\end{remark}

The SIPG penalty parameters $\delta_{\gamma}^{(1)}$ and $\delta_{\gamma}^{\fnc{D}}$
are similar to those given by Shahbazi~\cite{Shahbazi2005explicit}.
Indeed, we
have verified that they are identical for degree $p$ operators on simplex elements with constant-coefficient scalar diffusion, provided
\begin{enumerate}
\item the SBP matrices $\Hk$ and $\Bg$ and their corresponding nodes define cubature rules
that are exact for polynomials of degree $2p$, and; 
\item the number of SBP nodes is equal to the number of basis functions for $\poly{p}$.
\end{enumerate}
When these two conditions are satisfied the SBP cubatures reproduce the $L^2$
norm on the volume and face exactly, so the inverse trace inequalities of
Warburton and Hesthaven apply~\cite{Warburton20032765}.  However, in general, $\Hk$ is only
exact for polynomials of degree $2p-1$ and there are more SBP nodes than basis
functions in $\poly{p}$, so the penalties given here differ
from~\cite{Shahbazi2005explicit}.  Furthermore, the penalties $\delta_\gamma^{(1)}$ and $\delta_\gamma^{\mathcal{D}}$ are more general than those provided in~\cite{Shahbazi2005explicit}, because they are applicable to spatially varying tensor diffusion and elements other than simplices.

In practice, we define SBP operators on a reference element and employ a
coordinate transformation for each element in the physical domain.  Therefore, some remarks
are warranted regarding the implementation of the SAT-SIPG penalties when
coordinate transformations are used.  Let $\bm{x}(\bm{\xi})$ be an affine and
bijective coordinate transformation from reference space, $\bm{\xi} =
[\xi,\eta]^T \in \Omega_{\xi}$, to physical space.  When such a coordinate
mapping is used with SAT-SIPG penalties, $(\lambda_{\max})_{\kappa}$ corresponds
to the largest eigenvalue of
\begin{equation*}
\begin{bmatrix} \mat{J} \Lamxx & \mat{J} \Lamxy \\ \mat{J} \Lamyx &
\mat{J} \Lamyy \end{bmatrix}_{\kappa}, 
\end{equation*}
where $\mat{J}$ is a diagonal matrix holding the determinant of the mapping
Jacobian at each node of $\kappa$.  In addition, the penalties matrices
$\Siggam{1}$ and $\Sig_{\gamma}^{\fnc{D}}$ in \eqref{eq:SIPG} must be multiplied by
the squared norm of the scaled contravariant basis vectors at the face nodes,
\ie, the diagonal matrix whose $j$th entry is
\begin{equation*}
\left\| \left[ \fnc{J} (n_\xi \nabla \xi + n_\eta \nabla \eta)\right]_{j} \right\|^{2}
,\qquad \forall j = 1,2,\ldots,n_{\gamma},
\end{equation*}
where $\fnc{J} = \det(\partial\bm{x}/\partial \bm{\xi})$ is the determinant of
the mapping Jacobian, $\nabla \xi$ and $\nabla \eta$ are the contravariant basis
vectors, and $n_\xi$ and $n_\eta$ are the components of the unit normal on face
$\gamma$ in reference space.  Note that the squared norm $\| \Bg^{\frac{1}{2}}
\Rgk \Hk^{-\frac{1}{2}} \|_{2}^{2}$ can be pre-computed in reference space.

\ignore{
\subsection{Some remarks on coordinate mappings}

In theory, it is possible to construct SBP operators for each element in the
mesh; however, this requires storing the dense matrices $\Hk$, $\Q_{x,\kappa}$,
$\Q_{y,\kappa}$, \etc for each element, which is prohibitively expensive for
high-order operators.  Therefore, in practice, we follow the standard approach
of defining operators on a reference element and employing a coordinate
transformation for each element in the physical domain.

For simplicity, we assume affine transformations and simplex elements.  Issues
related to multidimensional SBP-SAT discretizations on curvilinear meshes are
discussed elsewhere~\cite{Fernandez2016, Crean2016}.  Let $\bm{x}(\bm{\xi})$ be
an affine and bijective coordinate transformation from reference space,
$\bm{\xi} = [\xi,\eta]^T \in \Omega_{\xi}$, to physical space.  Under this
transformation, the PDE~\eqref{eq:parabolic} can be expressed as\footnote{We use
  the metric invariants to obtain the conservative form in
  \eqref{eq:parabolic-trans}.}
\begin{equation}\label{eq:parabolic-trans}
  \fnc{J} \frac{\partial \fnc{U}}{\partial t} - \nabla_{\xi} \cdot \left(
  \Lambda_{\xi} \cdot \nabla_{\xi} \fnc{U} \right) = \fnc{J} \fnc{F}, \quad \forall \;
  (x,y) \in \Omega_{\xi},
\end{equation}
where $\fnc{J} = \det(\partial\bm{x}/\partial \bm{\xi})$ is the determinant of
the mapping Jacobian and
\begin{equation*}
  \Lambda_{\xi} \equiv 
  \begin{bmatrix} \partial \xi/\partial x & \partial \xi/\partial y \\
    \partial \eta /\partial x & \partial \eta/ \partial y
  \end{bmatrix}
  \begin{bmatrix} \fnc{J} \lambda_{xx} & \fnc{J} \lambda_{xy}\\ 
  \fnc{J} \lambda_{yx} & \fnc{J} \lambda_{yy} \end{bmatrix}
  \begin{bmatrix} \partial \xi/\partial x & \partial \eta/\partial x \\
    \partial \xi /\partial y & \partial \eta/ \partial y
  \end{bmatrix}.
\end{equation*}
Note that $\Lambda_{\xi}$ is symmetric positive definite if $\Lambda$ is SPD and
$\fnc{J} > 0$; the latter condition is ensured by the bijective assumption on
$\bm{x}(\bm{\xi})$.

The SBP-SAT discretization of~\eqref{eq:parabolic-trans} is 
\begin{equation*}
\mat{J}_{\kappa} \frac{\diff \bm{u}_{\kappa}}{\diff t} =
\D_\kappa \bm{u}_{\kappa} + \mat{J}_{\kappa} \bm{f}_\kappa
-\Hk^{-1}\bm{s}_{\kappa}^{\fnc{I}}\left(\bm{u}_{h}\right)
-\Hk^{-1}\bm{s}_{\kappa}^{\fnc{B}}\left(\bm{u}_{h}, \bm{u}_\fnc{D},
\bm{u}_\fnc{N}\right),
\end{equation*}
where $\Hk$ is the SBP norm matrix for the reference element, $\mat{J}_{\kappa}$
is a diagonal matrix holding $\fnc{J}$ at the nodes of the reference element,
and
\begin{equation}
\D_\kappa = \left\{ \begin{bmatrix} \D_{\xi} & \D_{\eta} \end{bmatrix}
\begin{bmatrix} \Lam_{\xi\xi} & \Lam_{\xi\eta} \\ 
  \Lam_{\eta\xi} & \Lam_{\eta\eta} \end{bmatrix}
\begin{bmatrix} \D_{\xi} \\ \D_{\eta} \end{bmatrix} \right\}_{\kappa}.
\end{equation}
The diagonal matrices $\Lam_{\xi\xi}$, $\Lam_{\xi\eta}$, $\Lam_{\eta\xi}$, and
$\Lam_{\eta\eta}$ hold the components of the tensor $\Lambda_{\xi}$ evaluated at
the nodes.

The interpolation/extrapolation operators, $\Rgk$, are applied in reference
space and appear unchanged in the SAT penalties $\bm{s}_{\kappa}^{\fnc{I}}$ and
$\bm{s}_{\kappa}^{\fnc{B}}$.  In contrast, the normal derivative operators are
updated to account for the coordinate transformation, for example,
\begin{align*}
  \D_{\gamma\kappa} &= \mat{N}_{\xi,\gamma} \Rgk \left(\Lam_{\xi\xi} \D_{\xi} +
  \Lam_{\xi\eta} \D_{\eta} \right)_\kappa + \mat{N}_{\eta,\gamma} \Rgk
  \left(\Lam_{\eta\xi}\D_{\xi} + \Lam_{\eta\eta}\D_{\eta}\right)_\kappa.
\end{align*}

Accounting for the above changes, the analysis in Sections~\ref{sec:adjoint} and
\ref{sec:energy_analysis} is otherwise unchanged.  For example, when implementing
the SAT-BR2 scheme we use
\todo{I don't think this is what I implemented in the code. Is that a problem?}
\begin{equation*}
    \setlength{\arraycolsep}{5pt}
    \Cgk = \frac{1}{2} \Bg \begin{bmatrix}
      \mat{N}_{\xi,\gamma} \Rgk & \mat{N}_{\eta,\gamma} \Rgk \end{bmatrix},
    \qquad\text{and}\qquad
    \Lam_{\kappa}^{*} = \left\{ 
    \begin{bmatrix} \Lam_{\xi\xi} & \Lam_{\xi\eta} \\ 
      \Lam_{\eta\xi} & \Lam_{\eta\eta} \end{bmatrix}^{-1}
    \begin{bmatrix} \H & \\ & \H \end{bmatrix} \right\}_{\kappa}.
\end{equation*}
Note that the matrix $\Bg$ holds the cubature weights for the face $\gamma$ in
reference space.
}

\ignore{

% For now, I am going to omit this section: the notation and sign conventions of
% CNG are different enough that trying to compare is going to take significant
% space, and it is not worth it.

\subsection{The Carpenter-Nordstr\"om-Gottlieb (CNG) scheme}

We conclude this section by considering the special case of tensor-product
multidimensional SBP discretizations.  Our objective is to confirm that the
conditions in Theorem~\ref{thm:stability-condition} reduce to those of
Carpenter, Nordstr\"om, and Gottlieb~\cite{Carpenter1999,Carpenter2010}.

In the case of (classical) tensor-product SBP discretizations, the face nodes
coincide with some of the element nodes.  In this case, the
interpolation/extrapolation operators become trivial:
\begin{equation*}
  (\Rgk)_{ij} = \begin{cases} 1 & \text{if }\bm{x}_{i} = \bm{x}_{j} \\
    0 & \text{otherwise}.
  \end{cases}
\end{equation*}
For simplicity, we will assume that the tensor-product element uses uniform node
spacing, has the same number of nodes in the $x$ and $y$ directions, and uses
the same SBP operators for both directions.  Consequently, $\Hk = h^2 \H \otimes
\H$ and $\Bg = h \H$, where $h$ is the nodes spacing and $\H$ is the
one-dimensional SBP norm for a grid with unit spacing.  Furthermore, we will
consider a face $\gamma$ with face normal $\bm{n}^T = [-1,0]$, that is, the face
on the ``west'' side of the quadrilateral.  Since we are considering a
quadrilateral, we will also use $\alpha_{\gamma\kappa} = 1/4$.

Based on the above assumptions and the nature of $\Rgk$, we find 
\begin{align*}
  \Cgk \left(\alpha_{\gamma\kappa}\Lam_{\kappa}^{*}\right)^{-1}\Cgk^T
  &= \frac{1}{4\alpha_{\gamma\kappa}} \mydiag\left( \frac{(\Bg)_{i}^2}{(\Hk)_{ii}} \bm{n}_{i}^T
  \begin{bmatrix} \lambda_{xx} & \lambda_{xy} \\ \lambda_{yx} & \lambda_{yy} 
  \end{bmatrix}_{i} \bm{n}_{i} \right) \\
  &= \frac{1}{4\alpha_{\gamma\kappa}} h \H \mydiag\left( \frac{1}{h \H_{1}} (\lambda_{xx})_{i} \right).
\end{align*}
Above, we have abused notation slightly and used the index $i$ to represent both
the face-node index and the volume-node index.  The above expression for the
penalty coefficient is consistent with the SAT analysis and review
in~\cite{Carpenter2010}.  To see this, we consider the constant-coefficient
diffusion case and take $\lambda = \epsilon$, to be consistent
with~\cite{Carpenter2010}.  Then, in the notation of \cite{Carpenter2010}, we
have
\begin{gather*}
  r_{02} = l_{02} = 0,\qquad r_{20} = l_{20} = 0, \qquad r_{21} = l_{21} = 0,
  \qquad r_{22} = l_{22} = 0, \\
  \Siggam{3} = \frac{1}{2}\Bg \qquad \Rightarrow
  l_{01} = -\frac{\epsilon}{2} = r_{01} - \epsilon \\
  \Siggam{2} = -\frac{1}{2}\Bg \qquad \Rightarrow
  l_{10} = \frac{\epsilon}{2}, \qquad r_{10} = -\frac{\epsilon}{2}
  l_{00} = r_{00} \leq \frac{\epsilon}{2(\alpha + \beta)}
\end{gather*}

Note that $\alpha_{\gamma\kappa}$ can be set to 1 for tensor products, because
of the way that the volume term can be partitioned.

}

\ignore{
%--------------------------------------------------------------------------------
\subsection{Particular examples from the DG literature: BR2 and SIPG}

SIPG and BR2 are two representatives of interior penalty discontinuous Galerkin methods for elliptic and parabolic equations. In this section, we will show how SIPG and BR2 fit into the SBP-SAT paradigm.
We can describe how the penalty parameter in SIPG is easy to estimate in
the SBP-SAT discretization, and how BR2 is less computationally expensive (due to
the diagonal norm).
\subsubsection{SIPG}
If we discretize the SIPG formulation used in ~\cite{Shahbazi2009multigrid}, we have
\begin{equation} \label{sipg-conditon-1}
\begin{aligned}
&\Sig_{\gamma}^{(1)} = \frac{1}{2}\delta_{\gamma}\B_{\gamma}()  \quad &&\Sig_{\gamma}^{(2)} = -\frac{1}{2}\B_{\gamma} 	\\
&\Sig_{\gamma}^{(3)} = \frac{1}{2}\B_{\gamma}  \quad &&\Sig_{\gamma}^{(4)} = \mat{0} \quad &&&\Sig_{\gamma}^{\fnc{D}} = \delta_{D}\mat{I}
\end{aligned}
\end{equation}
where $\delta$ is the penalty parameter which needs to be sufficiently large for stability. The value of $\delta$ needs to be chosen carefully. It needs to be sufficiently large for stability and convergence, but not too large to avoid ill-conditioned linear system. Making use of Lemma (\ref{thm:stability-condition}) obtained in last section, as well as $\B$ being diagonal, we have
\begin{equation} \label{sipg-conditon-2}
\begin{aligned}
\delta_{\gamma}^* &= \frac{1}{2\min(c_{\kappa}, c_{\nu})}\B_{\gamma,\max}^2 \\
\delta_{D}^* &= \frac{1}{c_{\kappa}}\B_{\gamma,\max}^2
\end{aligned}
\end{equation}
where $\B_{\gamma,i}$ is the maximum diagonal entry in $\B_{\gamma}$. Actually the penalty parameters for each face cubature points can be different from each other, that is, a diagonal penalty matrix can serve as a substitute. In this way, for stability we have
\begin{equation}
\begin{aligned}
\Sig_{\gamma}^{(1)} &= \frac{1}{2\min(c_{\kappa}, c_{\nu})}\B_{\gamma}^2 \\
\Sig_{D}^{(1)} &= \frac{1}{c_{\kappa}}\B_{\gamma}^2 
\end{aligned}
\end{equation}
This condition is more restrictive than (\ref{sipg-conditon-1}) and still guarantees to give a stable solution, with no extra computational and memory cost.

\subsubsection{Modified scheme of Bassi and Rebay}
Same as SIPG methods, the BR2 scheme is also conservative, consistent and adjoint consistent. The lifting operator employed in ~\cite{Bassi and Rebay 2005, Computer & Fluids} for stability is defined as (\textbf{I have no idea why \vec give bold character rather than arrow.})
\begin{equation}
\int_{\Omega_h} L^{\gamma}(u_h)\cdot\bm{\tau} \diff \Omega = \frac{1}{2}\int_{\gamma}\LL u_h \RR \vec{n}\cdot \{\vec{\tau} \} \diff \Gamma
\end{equation}
where $L^{\gamma}(u_h) = \begin{bmatrix}
L^{\gamma}_x(\bm{u}_h) & L^{\gamma}_y(\bm{u}_h)
\end{bmatrix}^T$ is a vector operator while $\vec{\tau}$ is any polynomial functions of degree up to $p$. The factor $\frac{1}{2}$ exists because of different definition of operator $\{\cdot\}$ in current work. The equality can be discretized using SBP operator as following
\begin{equation}
\begin{aligned}
(\bm{\tau}_x\H L_x\bm{u})_{\kappa} + (\bm{\tau}_y\H L_y\bm{u})_{\kappa} + (\bm{\tau}_x\H L_x\bm{u})_{\nu} + (\bm{\tau}_y\H L_y\bm{u})_{\nu} \\= \frac{1}{2}\{\R\bm{\tau}_x\}_{\gamma}^T\Nxg \B_{\gamma}\LL \R\bm{u}\RR_{\gamma} + \frac{1}{2}\{\R\bm{\tau}_y\}_{\gamma}^T\Nyg \B_{\gamma}\LL \R\bm{u}\RR_{\gamma}
\end{aligned}
\end{equation}
This identity holds for any $\vec{\tau}$ that is polynomial of degree up to $p$. (\textbf{don't know how to explain how to get $L_x$ and $L_y$}) 
\begin{equation}
\begin{aligned}
L_x^{\gamma}\Ukp = \frac{1}{2}\Hk^{-1}\R_{\gamma}^T\B_{\gamma}\Nxg(\Rgk\Ukp - \Rgn\Unu) \\
L_y^{\gamma}\Ukp = \frac{1}{2}\Hk^{-1}\R_{\gamma}^T\B_{\gamma}\Nyg(\Rgk\Ukp - \Rgn\Unu) \\
\end{aligned}
\end{equation}
As in analysis of SIPG, the BR2 also fit in the current paradigm. The difference from SIPG is in BR2 $\Sig_{\gamma}^{(1)}$ is a dense matrix that dependent on the lifting operator $\begin{bmatrix} L^{\gamma}_x & L^{\gamma}_y \end{bmatrix}^T$ rather than a diagonal matrix.  (\textbf{we need to determine how much BR2 related stuff should be involved when obtaining $\Sig^{(1)}$})  (\textbf{determine $\Sig^{D}$ later})
\begin{equation} \label{br2-conditon-1}
\begin{aligned}
&\Sig_{\gamma}^{(1)} = \Sig^{(1)}_{L,\kappa} + \Sig^{(1)}_{L,\nu} \quad &&\Sig_{\gamma}^{(2)} = -\frac{1}{2}\B 	\\
&\Sig_{\gamma}^{(3)} = \frac{1}{2}\B  \quad &&\Sig_{\gamma}^{(4)} = \mat{0} \quad &&&\Sig_{\gamma}^{\fnc{D}} = ???
\end{aligned}
\end{equation}
where
\begin{equation}
\Sig^{(1)}_{L,\kappa} = C_{BR2}\Cgk(\Lam_{\kappa}^*)^{-1}\Cgk^T
\end{equation}

As penalty parameter $\delta$ in SIPG, $C_{BR2}$ also exists for stability. In the following analysis we will give a lower bound of $C_{BR2}$ based on our previous general stability conditions (\ref{th}). 

}

%--------------------------------------------------------------------------------
\section{Numerical experiments}\label{sec:results}
%--------------------------------------------------------------------------------

This section presents some numerical experiments to verify the theory developed
in Sections~\ref{sec:adjoint}, \ref{sec:energy_analysis}, and
\ref{sec:generalize}.  For the verifications, we consider two families of SBP
operators developed for simplex elements.
\begin{description}
\item[SBP-$\Omega$:] These operators have strictly internal nodes, and the
  number of nodes is equal to the number of basis functions in
  $\poly{p}(\Omega_{\xi})$; therefore, the SBP-SAT discretizations based on these
  operators are equivalent to collocation discontinuous-Galerkin finite-element
  methods.  For the degree $p=1$ and $p=2$ operators, the SBP norm is a $2p$
  degree cubature, while for $p=3$ and $p=4$, the norm is a degree $2p-1$
  cubature.  Thus, for constant coefficient-diffusion, the SIPG penalty is
  identical to Shahbazi's for $p=1$ and $p=2$, while it is different for $p=3$
  and $p=4$ (see the discussion in Section \ref{sec:SIPG}).  The
  SBP-$\Omega$ operators were first presented in \cite{Fernandez2016}; see
  also~\cite{Hicken2016}.
\item[SBP-$\Gamma$:] These operators were designed to have $p+1$ nodes on each
  face; consequently, the interpolation operator $\Rgk$ uses only those nodes
  that lie on face $\gamma$.  With the exception of $p=1$, the SBP-$\Gamma$
  operators have more SBP nodes than basis functions in
  $\poly{p}(\Omega_{\xi})$.  For this reason, there are no (known) basis
  functions associated with these operators for $p > 1$; they are
  finite-difference operators but not finite-element operators.  The
  SBP-$\Gamma$ operators were presented in~\cite{multiSBP}.
\end{description}
Table~\ref{tab:operators} summarizes the SBP-$\Omega$ and SBP-$\Gamma$ operators
considered in this work.  For further details on the construction of these
operators, please see \cite{multiSBP} and \cite{Fernandez2016}.

\begin{table}[t]
  \begin{center}
    \caption[]{The SBP-$\Omega$ and SBP-$\Gamma$ operators for the triangle. The
      open circles denote the locations of the SBP nodes, while the black
      squares denote the locations of the face cubature points (for a given
      degree $p$ SBP operator, the face cubatures are the same for both
      families). \label{tab:operators}} \setlength{\tabcolsep}{0pt}
    \begin{tabular}{p{0.15\textwidth}p{0.2\textwidth}p{0.2\textwidth}p{0.2\textwidth}p{0.2\textwidth}}
      & \multicolumn{4}{c}{\textbf{degree}} \\\cline{2-5}
      \textbf{family} & 
      \multicolumn{1}{c}{$p=1$} & 
      \multicolumn{1}{c}{$p=2$} & 
      \multicolumn{1}{c}{$p=3$} & 
      \multicolumn{1}{c}{$p=4$}  \rule{0ex}{3ex} \\\hline
      \vspace*{-0.12\textwidth}\textbf{SBP}-$\bm{\Omega}$ &
      %\parbox[t]{0.2\textwidth}{\rule{0ex}{0.16\textwidth}SBP-$\Omega$} & 
      \parbox[b]{0.2\textwidth}{%
        \begin{center}%
          \includegraphics[width=0.16\textwidth]{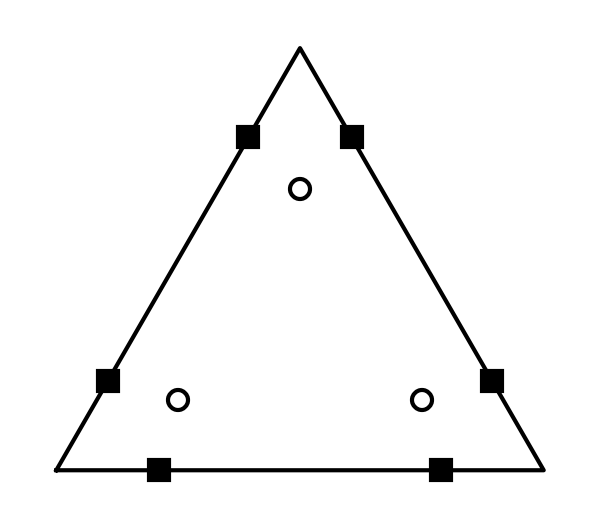}\\
          3 nodes\end{center}} &
      \parbox[b]{0.2\textwidth}{%
        \begin{center}%
          \includegraphics[width=0.16\textwidth]{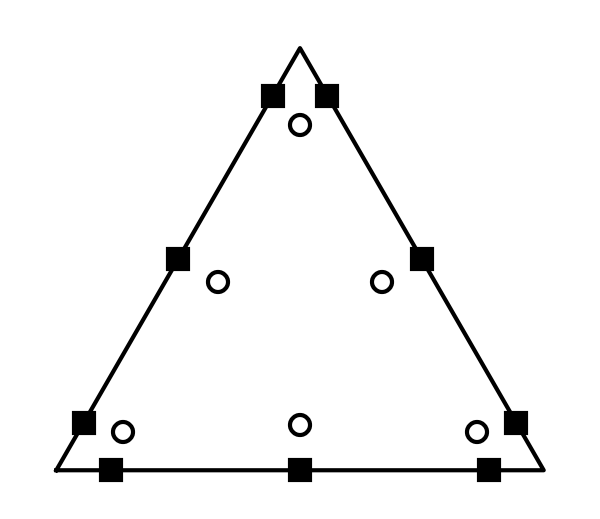}\\
          6 nodes\end{center}} &
      \parbox[b]{0.2\textwidth}{%
        \begin{center}%
          \includegraphics[width=0.16\textwidth]{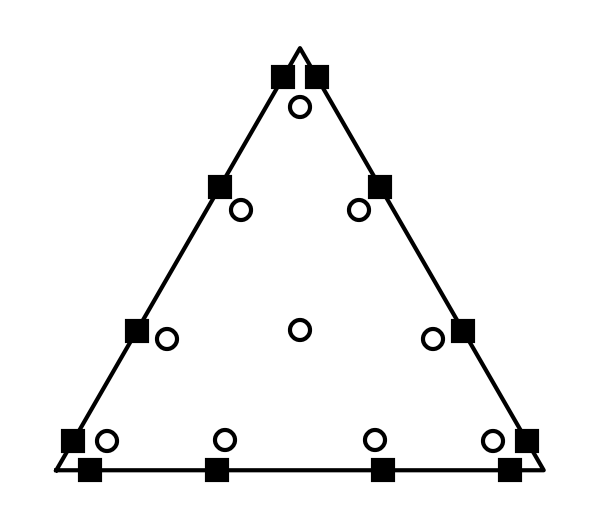}\\
          10 nodes\end{center}} &
      \parbox[b]{0.2\textwidth}{%
        \begin{center}%
          \includegraphics[width=0.16\textwidth]{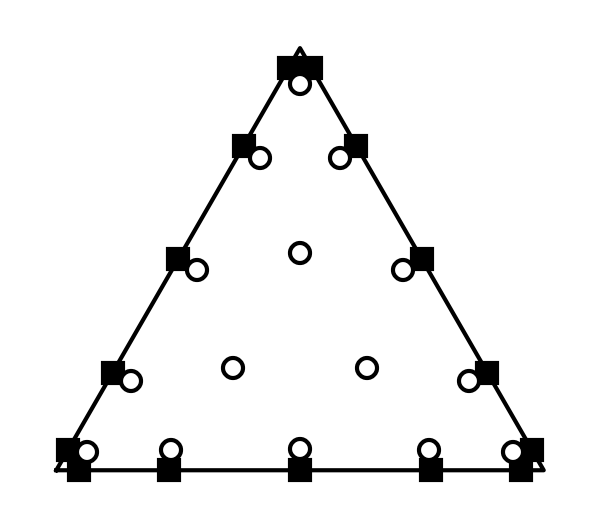}\\
          15 nodes\end{center}} \\\hline
      \vspace*{-0.12\textwidth}\textbf{SBP}-$\bm{\Gamma}$ & 
      \parbox[b]{0.2\textwidth}{%
        \begin{center}%
          \includegraphics[width=0.16\textwidth]{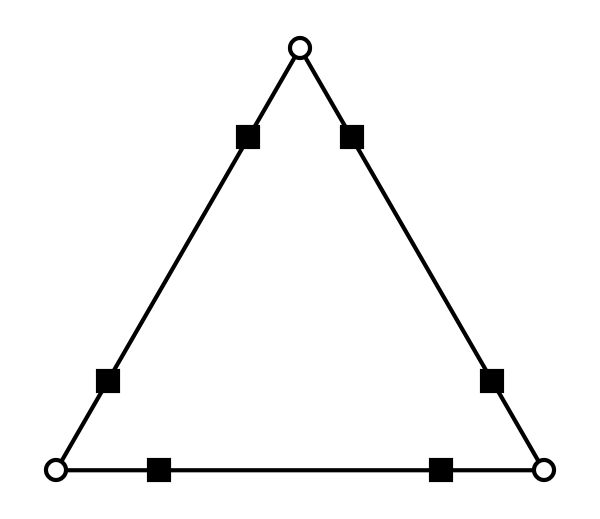}\\
          3 nodes\end{center}} &
      \parbox[b]{0.2\textwidth}{%
        \begin{center}%
          \includegraphics[width=0.16\textwidth]{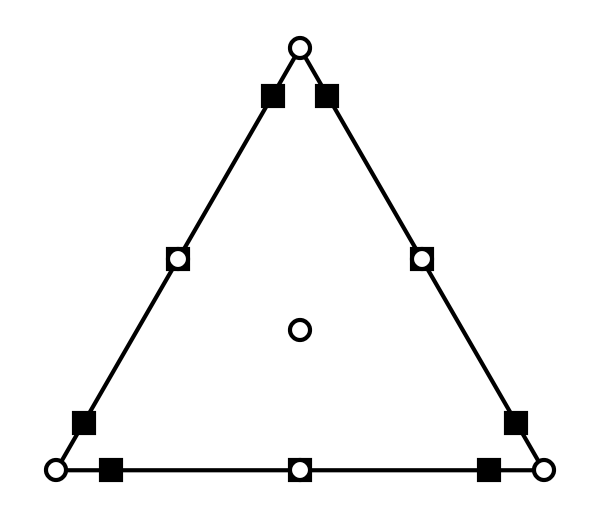}\\
          7 nodes\end{center}} &
      \parbox[b]{0.2\textwidth}{%
        \begin{center}%
          \includegraphics[width=0.16\textwidth]{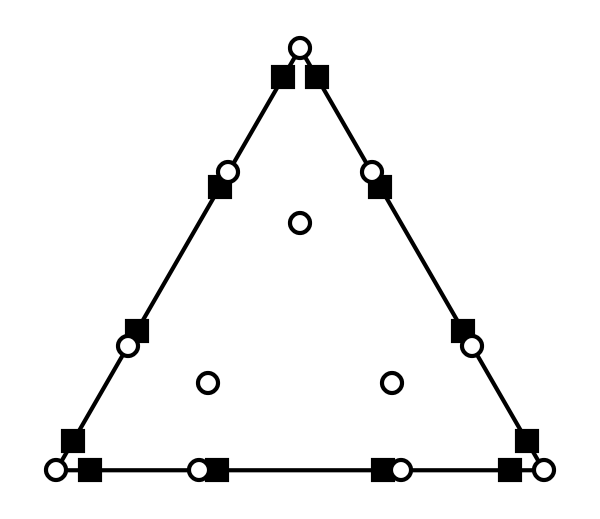}\\
          12 nodes\end{center}} &
      \parbox[b]{0.2\textwidth}{%
        \begin{center}%
          \includegraphics[width=0.16\textwidth]{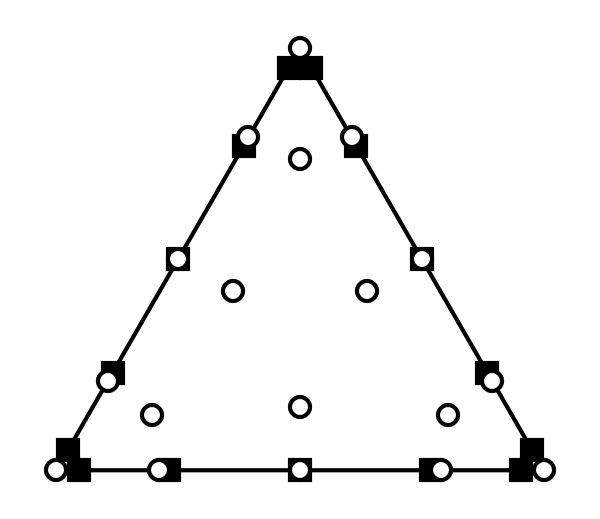}\\
          18 nodes\end{center}} \\\hline
    \end{tabular}
  \end{center}
\end{table}

The SAT-SIPG and SAT-BR2 generalizations in Section \ref{sec:generalize} are
implemented with face-weight parameters, $\alpha_{\gamma\kappa}$, computed using
the face area as follows.
\begin{equation*}
\alpha_{\gamma\kappa} = 
\begin{cases}
\cfrac{\fnc{A}(\gamma)}{\fnc{A}(\Gamma^{\fnc{I}}_{\kappa}) + 2\fnc{A}(\Gamma^{\fnc{D}}_{\kappa})} ,\quad\quad\gamma \in\Gamma^{\fnc{I}}, \\
\cfrac{2\fnc{A}(\gamma)}{\fnc{A}(\Gamma^{\fnc{I}}_{\kappa}) + 2\fnc{A}(\Gamma^{\fnc{D}}_{\kappa})},\quad\quad\gamma \in\Gamma,
\end{cases}
\end{equation*}
where the function $\fnc{A(\gamma)}$ computes the size of face $\gamma$, \ie,
length in 2D and area in 3D. The condition $\sum_{\gamma \subset
  \Gamma_{\kappa}} \alpha_{\gamma\kappa} = 1$ required in Lemma
\ref{lem:energy_start} is clearly satisfied by the above definition.

As noted above, the interpolation operator $\Rgk$ for the SBP-$\Gamma$
discretizations depends only on the nodes on the boundary of $\gamma$, so the
cost of applying $\Rgk$ or $\Rgk^T$ is $\text{O}(p)$ in two-dimensions.  In
three dimensions, the SBP-$\Gamma$ operators have $(p+1)(p+2)/2$ nodes on each
face and the cost of applying the interpolation operator and its transpose is
$\text{O}(p^2)$.  A consequence of this sparsity structure of $\Rgk$, as well as
the norm $\Hk$ being diagonal, is that the SAT-BR2 penalty matrix
$\Sig_{\text{BR2}}$ has an asymptotic cost of $\text{O}(p)$ and $\text{O}(p^2)$
in two and three dimensions, respectively, when SBP-$\Gamma$ operators are used.

In contrast, the SBP-$\Omega$ operators require dense extrapolation operators,
so the cost of applying $\Rgk$ and $\Rgk^T$ --- and, consequently, the cost of
$\Sig_{\text{BR2}}$ --- scales as $\text{O}(p^2)$ in two dimensions and
$\text{O}(p^3)$ in three dimensions.

\subsection{Accuracy study}
 
The first experiment is intended to verify primal and adjoint consistency by
examining the convergence rates of a discrete solution and an associated
functional.  To this end, we use a manufactured solution on the unit square
$\Omega = [0,1]^2$.  Specifically, we adopt a second-order polynomial function
for $\Lambda$ and a trigonometric solution for $\fnc{U}$ defined by,
respectively,
\begin{equation} \label{eqn:trigonometric forcing function}
\begin{aligned}
\Lambda &= \begin{bmatrix}
x^2+1 & xy \\ xy & y^2+1
\end{bmatrix}, \\
\text{and}\qquad
\fnc{U}(x,y) &= \sin(2\pi x)\sin(2\pi y).
\end{aligned}
\end{equation}
The source term $\fnc{F}$ is found by substituting $\Lambda$ and $\fnc{U}$ into
\eqref{eq:parabolic}.  Homogeneous Dirichlet boundary conditions are applied
along the boundaries of $\Omega$.  In addition, the time derivative in
\eqref{eq:parabolic} is dropped, \ie, we consider the Poisson PDE, since we are
interested in the accuracy of the spatial discretization.  The functional is
defined as
\begin{equation} \label{eqn:functional-for-test}
\begin{aligned}
\fnc{J} = \int_{\Omega} \fnc{U}\diff \Omega,
\end{aligned}
\end{equation}
which is a special case of \eqref{eq:fun} with $\fnc{V}_\fnc{N} = 0$ and
$\fnc{V}_\fnc{D} = 0$.  To estimate the asymptotic convergence rates, we use a
sequence of uniformly refined meshes consisting of $K = 128$, $512$, $2048$, and
8192 triangular elements. The coarsest mesh is shown in Figure \ref{fig:coarse
  mesh}.  The nominal element size is given by $h \equiv 1/\sqrt{K/2}$.

Figure \ref{fig:p_accuracy} shows the solution error measured in terms of the
$L^2$-norm; the integral in the $L^2$ norm is approximated using the SBP
matrices $\H_{\kappa}$ scaled appropriately by the mapping Jacobian.  We see
that, under mesh refinement, the solution errors are asymptotically
$O(h^{p+1})$, which is in agreement with the design accuracy.

Figure \ref{fig:j_accuracy} plots the errors in the SBP-SAT approximation of the
functional \eqref{eqn:functional-for-test}. A convergence rate of $2p$ is
achieved for both operators, which is also in agreement with the theoretical
order of convergence~\cite{hartmann:2007b, Hicken2011superconvergent}.

For this particular problem, the results show that for lower order
approximations ($p=1$ and $p=2$), the SBP-$\Omega$ operators produce more
accurate solutions, typically by 25\%. However, for the higher order
discretizations ($p=3$ and $p=4$) the solutions based on SBP-$\Gamma$ are more
accurate.  The SBP-$\Gamma$ operators produce more accurate estimates of the
functional for all orders of accuracy; however, we caution against generalizing
any of these results based on this one case.

\ignore{
Show optimal L2 rates for a steady problem.  It would be good to choose at least
one method that is not adjoint consistent to show that the L2 rate is not optimal.
}
\begin{figure}
    \begin{subfigure}{0.32\textwidth}
        \centering
        \includegraphics[width=1.0\linewidth]{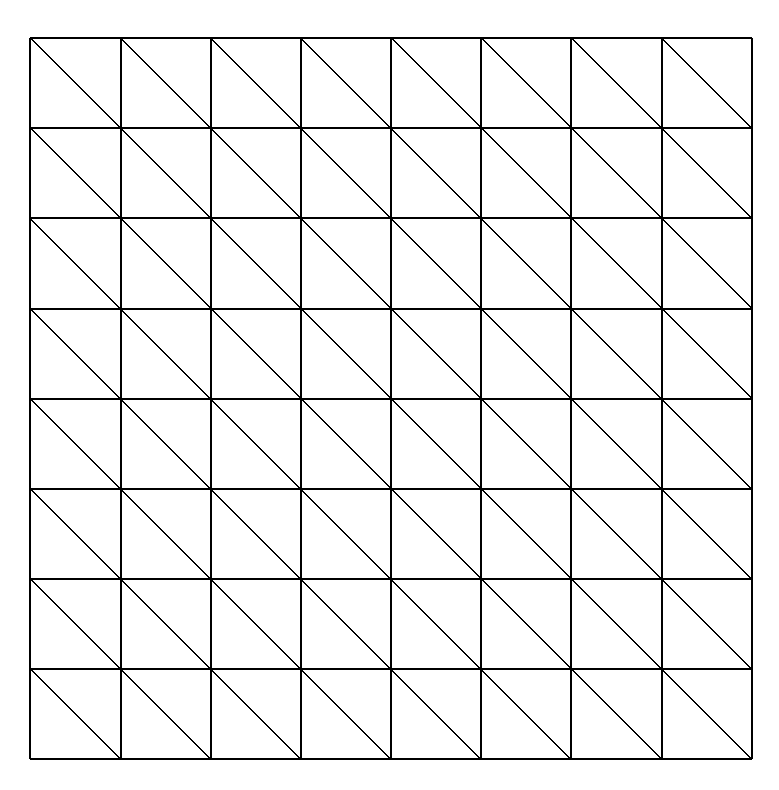}
        \subcaption{Coarsest mesh}
        \label{fig:coarse mesh}
    \end{subfigure}
    \begin{subfigure}{0.32\textwidth}
        \centering
        \includegraphics[width=1.0\linewidth]{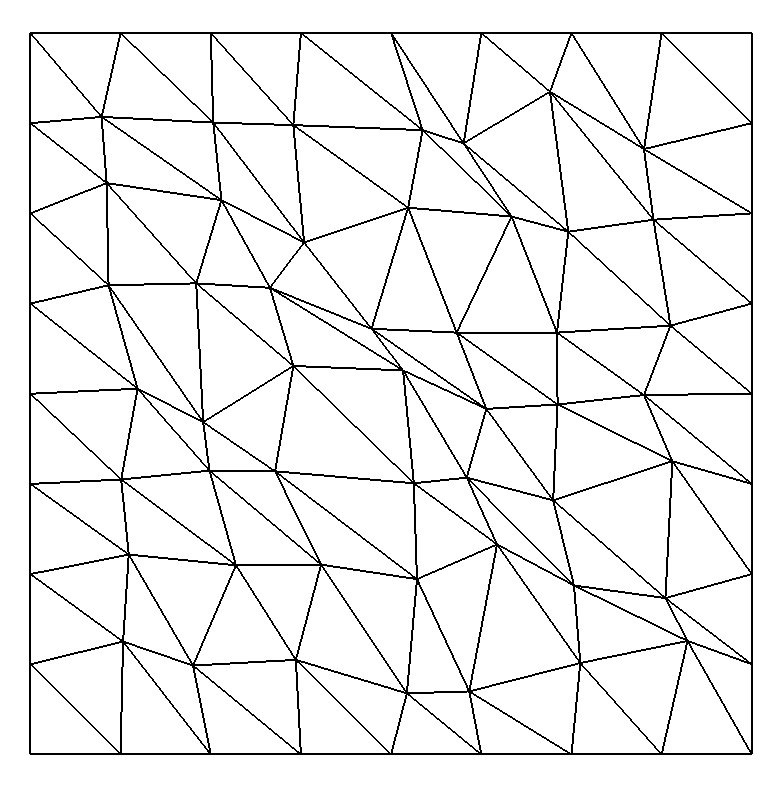}
        \subcaption{Perturbed mesh, $8\times 8$}
        \label{fig:perturbed_mesh_8x8}
    \end{subfigure}
    \begin{subfigure}{0.32\textwidth}
        \centering
        \includegraphics[width=1.0\linewidth]{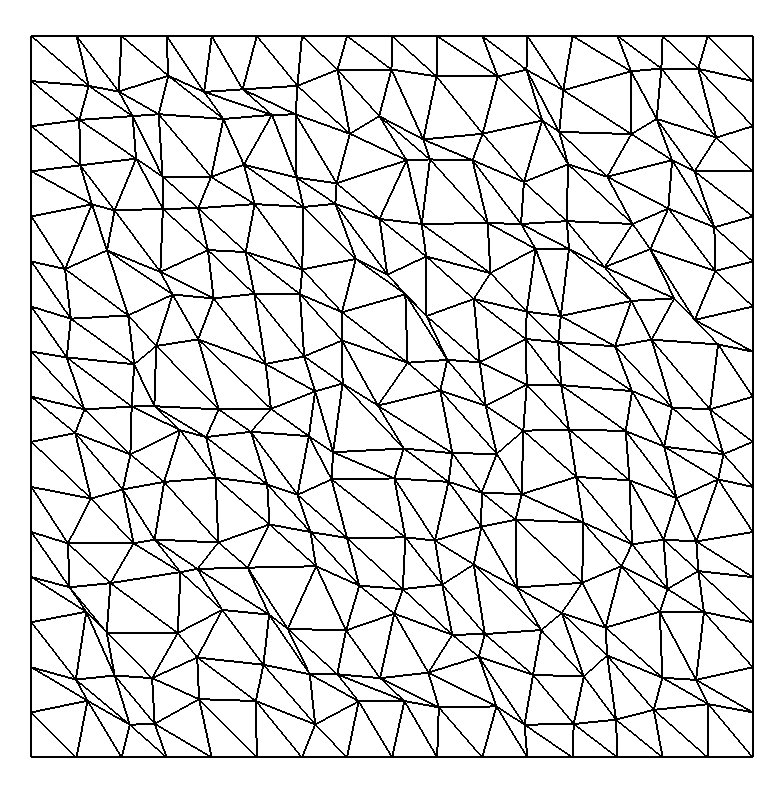}
        \subcaption{Perturbed mesh, $16\times 16$}
        \label{fig:perturbed_mesh}
    \end{subfigure}
    \caption{Different meshes used for test cases} 
\end{figure}

\begin{figure}  
    \centering
    \begin{subfigure}[b]{0.52\linewidth}
        \centering
        \includegraphics[width=1.0\linewidth]{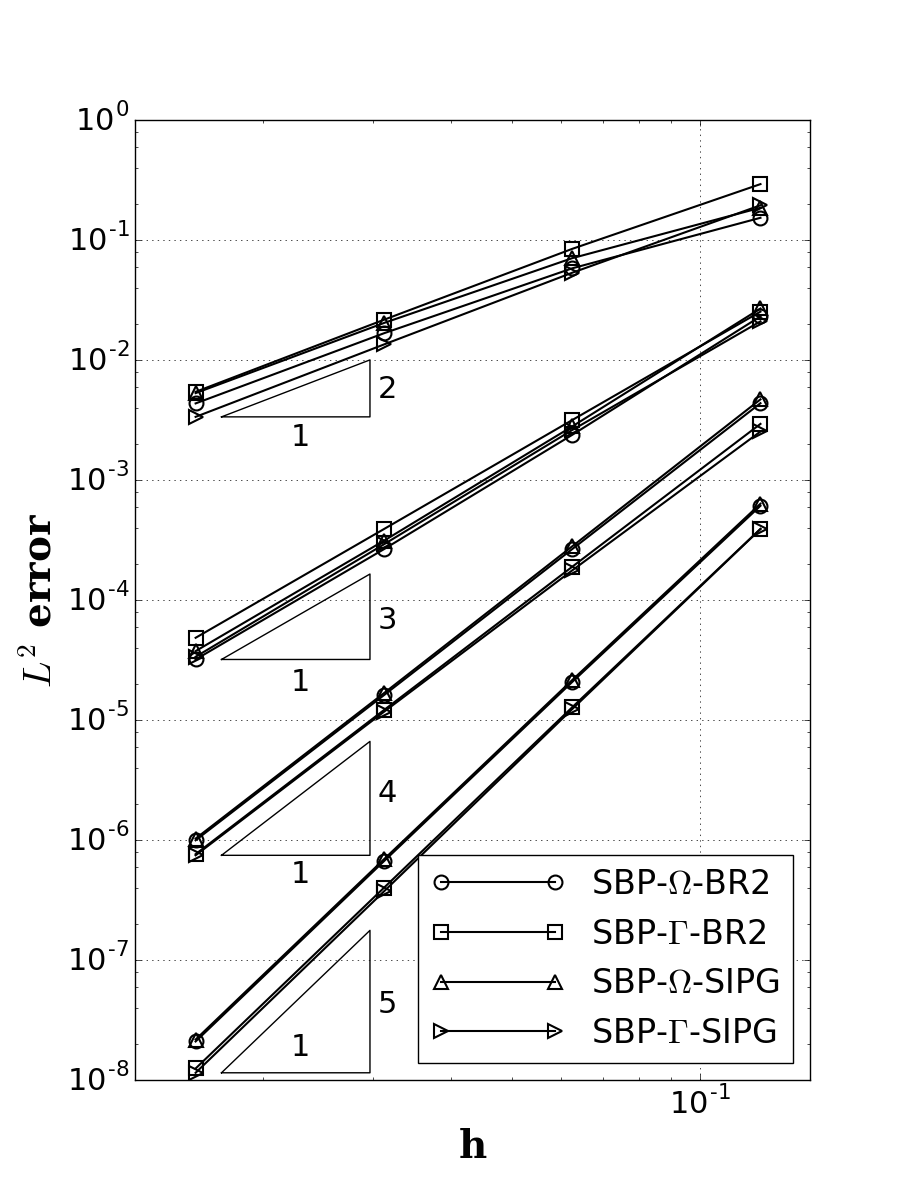}
        \subcaption{Convergence rate of solution}
        \label{fig:p_accuracy}
    \end{subfigure}%
    \begin{subfigure}[b]{0.52\linewidth}
        \centering
        \includegraphics[width=1.0\linewidth]{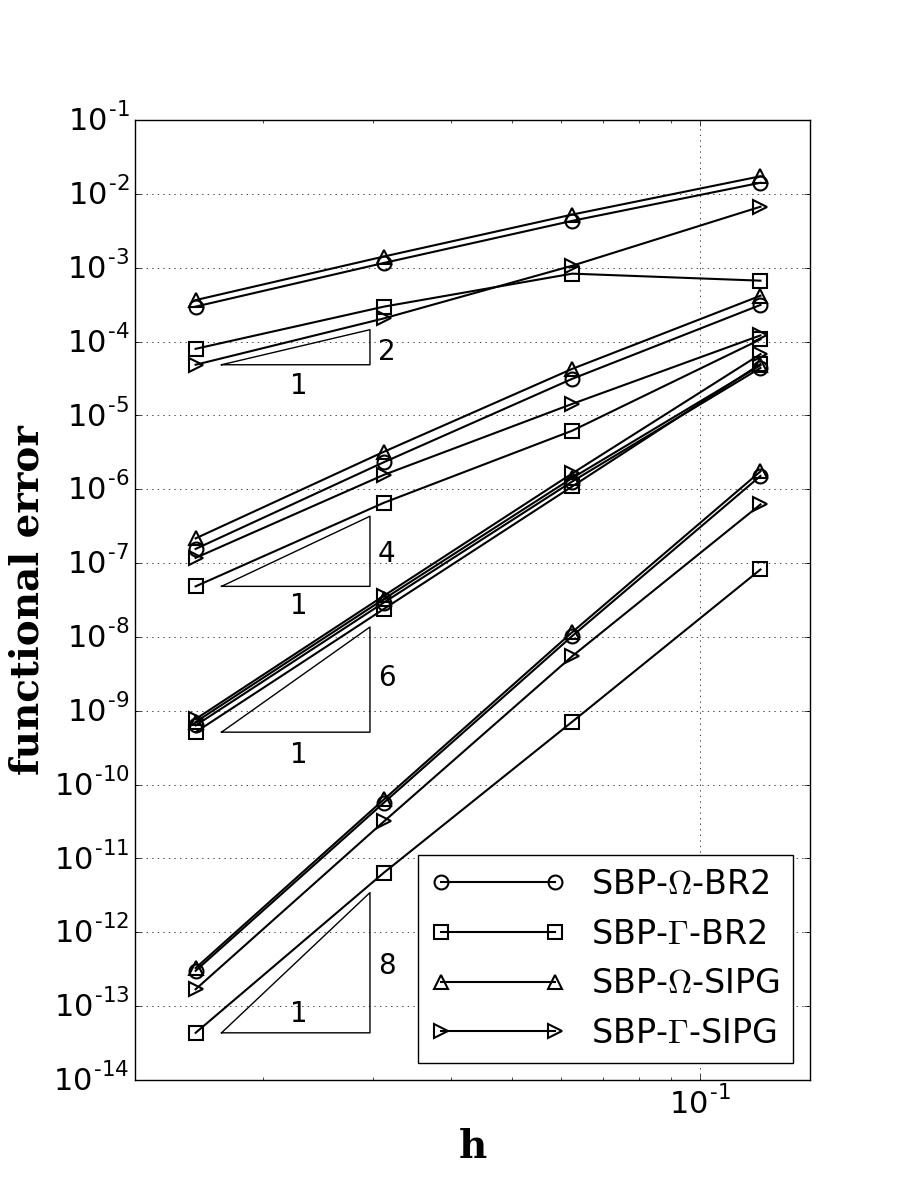}
        \subcaption{Convergence rate of functional}
        \label{fig:j_accuracy}
    \end{subfigure}
    \caption{Convergence rate study}
\end{figure}

%\begin{figure}  
%    \centering
%    \begin{subfigure}[b]{0.8\linewidth}
%        \centering
%        \includegraphics[width=1.0\linewidth]{p_accuracy.png}
%        \subcaption{Convergence rate of solution}
%        \label{fig:p_accuracy1}
%    \end{subfigure}%
%
%    \begin{subfigure}[b]{0.8\linewidth}
%        \centering
%        \includegraphics[width=1.0\linewidth]{j_accuracy.png}
%        \subcaption{Convergence rate of functional}
%        \label{fig:j_accuracy1}
%    \end{subfigure}
%    \caption{Convergence rate study}
%\end{figure}

%--------------------------------------------------------------------------------
\subsection{Comparison between SAT-BR2 and SAT-SIPG}

Interior penalties can have a significant influence on the conditioning of the
discretized systems; therefore, in this experiment we assess the conditioning
imparted by the SAT-BR2 and SAT-SIPG penalties.  We expect that SAT-BR2 will
produce lower condition numbers, because it provides a tighter stability bound
than SAT-SIPG.  To numerically verify this, we compare the condition numbers of
the linear systems produced by the SAT-BR2 and SAT-SIPG penalties.  We also
compare the corresponding solution errors, because there is often a trade-off
between conditioning and accuracy.

The penalties are mesh dependent, so this experiment is performed on a
structured triangular mesh that is randomly perturbed, as shown in Figure
\ref{fig:perturbed_mesh}.  The mesh is extremely nonsmooth and almost tangled;
indeed, the largest angle in the mesh is 179.90\textdegree.

The $L^2$ errors and condition numbers are listed in Tables
\ref{tb:l2error_16x16} and \ref{tb:cn_16x16}, respectively, for different orders
of approximations and SBP families. Table \ref{tb:l2error_16x16} shows that the
SAT-BR2 and SAT-SIPG schemes produce comparable errors in the $L^2$ norm for
both SBP families.  This is also observable in the results of the mesh
refinement study presented earlier.  While the BR2 variant is somewhat more
accurate for the SBP-$\Omega$ family, no obvious trend is apparent for the
SBP-$\Gamma$ family.

In contrast, Table \ref{tb:cn_16x16} shows that the BR2 variant consistently
produces a smaller condition number for both families, as expected.
Furthermore, at least for this particular experiment, SBP-$\Gamma$ always
produces considerably lower condition numbers than SBP-$\Omega$.  \ignore{
  Although not shown in this experiment, SAT-BR2 can sometimes be an order of
  magnitude better in conditioning when the condition number of $\Lambda$ is
  large.  }

\begin{table}[!ht] 
    \centering
    \caption{$L^2$-norm of error on the perturbed $16\times 16$
      mesh} \label{tb:l2error_16x16}
    \begin{tabular} {p{1cm} p{2cm}  p{2cm} p{2cm} p{2cm}}
        & \multicolumn{2}{c}{\textbf{SBP-$\bm{\Gamma}$}} & \multicolumn{2}{c}{\textbf{SBP-$\bm{\Omega}$}} \\
        \cline{2-5}
       \rule{0ex}{3ex} & \textbf{SAT-BR2}  &  \textbf{SAT-SIPG}    & \textbf{SAT-BR2} & \textbf{SAT-SIPG} \\\hline
        \rule{0ex}{3ex}$\bm{p=1}$ & 1.160e-1 & 8.030e-2 & 7.802e-2 & 9.045e-2 \\
        $\bm{p=2}$ & 5.274e-3 & 4.720e-3 & 4.969e-3 & 5.697e-3 \\
        $\bm{p=3}$ & 4.103e-4 & 4.026e-4 & 5.866e-4 & 6.274e-4 \\
        $\bm{p=4}$ & 3.132e-5 & 3.163e-5 & 5.016e-5 & 5.236e-5 \\
    \end{tabular}
\end{table}

\begin{table}[!ht] 
    \centering
    \caption{Condition number on the perturbed $16\times 16$ mesh} \label{tb:cn_16x16}
    \begin{tabular} {p{1cm} p{2cm}  p{2cm} p{2cm} p{2cm}}
        & \multicolumn{2} {c} {\textbf{SBP-$\bm{\Gamma}$}} & \multicolumn{2} {c} {\textbf{SBP-$\bm{\Omega}$}} \\
        \cline{2-5}
        \rule{0ex}{3ex} & \textbf{SAT-BR2}  &  \textbf{SAT-SIPG}    & \textbf{SAT-BR2} & \textbf{SAT-SIPG} \\\hline
        \rule{0ex}{3ex}$\bm{p=1}$ & 4.734e5 & 4.852e5 & 1.572e6 & 1.705e6 \\
        $\bm{p=2}$ & 2.843e6 & 2.917e6 & 5.514e6 & 6.354e6 \\
        $\bm{p=3}$ & 6.977e6 & 7.032e6 & 1.789e7 & 2.008e7 \\
        $\bm{p=4}$ & 1.337e7 & 1.440e7 & 3.794e7 & 4.201e7 \\
    \end{tabular}
\end{table}

%--------------------------------------------------------------------------------
\subsection{Tightness of the stability bound}

Since the stability conditions for both SAT-BR2 and SAT-SIPG are sufficient but
not necessary, a relaxation factor $\alpha \in (0,1]$ acting on $\Sigma^{(1)}$
  may still yield a stable bilinear form. To some degree, such a relaxation
  factor can serve as a measure of the tightness of the stability
  conditions. For example, overly conservative SAT penalties will allow for a
  relaxation factor that is much smaller than $1$; on the other hand, a
  necessary and sufficient stability condition would only permit $\alpha \geq
  1$.

To study the tightness of the stability conditions, we consider the effect of
scaling the penalties on the largest eigenvalue of the linear system, \ie the
eigenvalue with the smallest magnitude.  In particular, we form the
discretization for the problem defined by \eqref{eqn:trigonometric forcing
  function} but scale $\Siggam{1}$ by $\alpha$, then evaluate the linear
system's largest eigenvalue and plot it versus the relaxation factor.  Due to
the expense of computing the smallest magnitude eigenvalue of large matrices,
the matrices here are evaluated on the coarser $8\times8$ randomly perturbed
mesh shown in Figure \ref{fig:perturbed_mesh_8x8}.

The eigenvalues of smallest magnitude are plotted in Figure
\ref{fig:relaxation}. As can be seen, the allowable relaxation factors are less
than one for both penalties and all SBP operators considered, which verifies the
conditions in Theorem \ref{thm:stability-condition}.  Furthermore, the smallest
allowable relaxation factor is between $0.4$ and $0.6$ for SBP-$\Gamma$ and
between $0.25$ and $0.4$ for SBP-$\Omega$, which suggests that the bound is
relatively tight in the sense that it is not orders of magnitude larger than
necessary.

Finally, the comparison between SAT-BR2 and SAT-SIPG is in agreement with the
theoretical analysis in Section \ref{sec:generalize}: SAT-BR2 represents a
tighter stability bound and cannot tolerate as small a relaxation factor.  As
$\alpha$ is reduced in Figure~\ref{fig:relaxation}, the eigenvalue corresponding
to BR2 becomes positive before that of SIPG for the same SBP operator.
Additionally, for SBP-$\Gamma$, a higher order approximation allows a lower
relaxation factor, while for SBP-$\Omega$ the opposite trend is observed.

%Since the condition numbers are plotted with log scale, negative values indicates that the corresponding linear systems are not semi-positive definite. As expected, sufficiently large SATs (large relaxation factor) ensure the accuracy of the solution, but significantly increase the condition number of the linear systems. More importantly, we can see that for this specific case, the SBP-$\Gamma$ operators allow for, roughly speaking, a relaxation factor of 0.5 while SBP-$\Omega$ allows for a factor of 0.4. A factor less than these threshold values destroys the solution accuracy and deteriorates the conditioning of the linear systems.
\begin{figure} 
%    \begin{subfigure}{0.55\textwidth}
%        \centering
%        \includegraphics[width=1.0\linewidth]{relaxation_l2norm_gamma.png}
%        \subcaption{$L^2$-norm of error using SBP-$\Gamma$}
%        \label{fig:relaxation_l2norm_gamma}
%    \end{subfigure}
%    \begin{subfigure}{0.55\textwidth}
%        \centering
%        \includegraphics[width=1.0\linewidth]{relaxation_l2norm_omega.png}
%        \subcaption{$L^2$-norm of error using SBP-$\Omega$}
%        \label{fig:relaxation_l2norm_omega}
%    \end{subfigure}
    \begin{subfigure}{0.51\textwidth}
        \centering
        \includegraphics[width=1.0\linewidth]{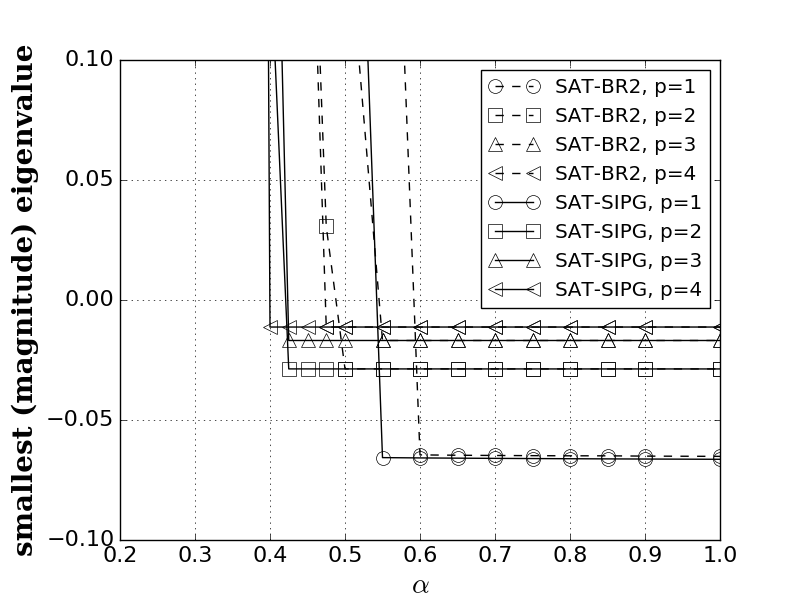}
        \subcaption{Smallest eigenvalue using SBP-$\Gamma$}
        \label{fig:relaxation_cn_gamma}
    \end{subfigure}
    \begin{subfigure}{0.51\textwidth}
        \centering
        \includegraphics[width=1.0\linewidth]{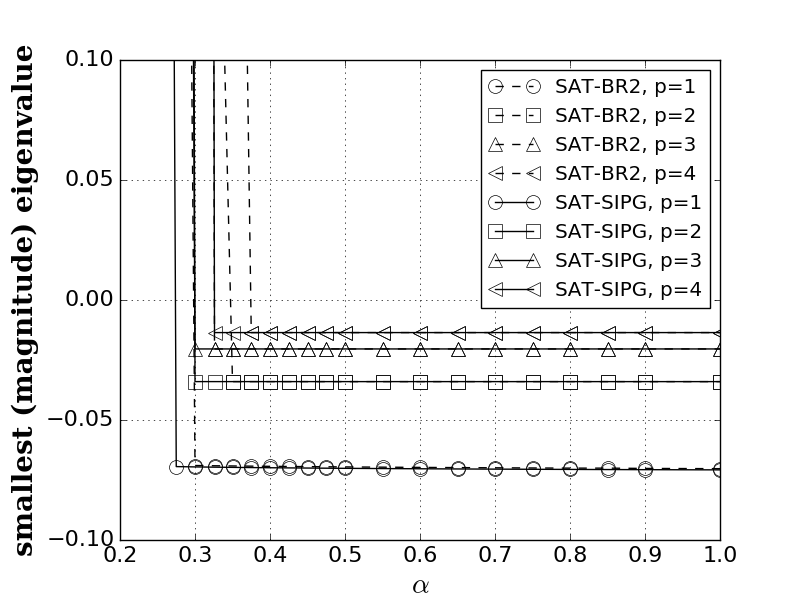}
        \subcaption{Smallest eigenvalue using SBP-$\Omega$}
        \label{fig:relaxation_cn_omega}
    \end{subfigure} 
    \caption{Relaxation effect on SATs}
    \label{fig:relaxation}   
\end{figure}

%--------------------------------------------------------------------------------
\subsection{Unsteady energy stability}

In this section, to complement the preceding investigation, we solve an unsteady
problem with homogeneous Dirichlet boundary conditions and no source term using
a SBP-$\Gamma$ discretization with two different relaxation factors: a stable
value of $\alpha = 1$ and an unstable value of $\alpha=0.3$ (based on the
results in Figure~\ref{fig:relaxation_cn_gamma}).  As mentioned in Section
\ref{sec:energy_analysis}, the PDE solution ``energy'' should be monotonically
decreasing as time evolves.  The solution of the SBP-SAT discretization will
also have a decreasing energy, provided the stability conditions of Theorem
\ref{thm:stability-condition} are satisfied.

%Conversely, we will demonstrate that a discretization with SATs that do not
%satisfy the conditions of Theorem \ref{thm:stability-condition} will produce a
%divergent solution. This is realized by imposing a relaxation factor of
%$\alpha=0.3$ on the SATs.

For this study, the time derivative is discretized using the second-order
backward differentiation formula (BDF2) with a time step $\Delta
t=10^{-3}$. Once again, the $16\times 16$ perturbed mesh shown in Figure
\ref{fig:perturbed_mesh} is utilized. The same random solution is prescribed as the initial condition for the same order of approximations.

Figure \ref{fig:unsteady_energy} shows the energy evolution using different
order SBP-$\Gamma$ operators with SAT-BR2 penalties. The results using
SBP-$\Omega$ and/or SAT-SIPG are qualitatively the same and are not shown
here. As can be seen, all solutions based on $\alpha=0.3$ diverge after a short
period of time; note the logarithmic time scale.  In contrast, the energy for
solutions of the unscaled SAT-BR2 discretizations (\ie, $\alpha=1$) is
monotonically decreasing, as expected.  Additionally, we see that the $p=2$,
$p=3$, and $p=4$ discretizations produce indistinguishable results, while $p=1$
produces a slightly different energy history.  This suggests that operators
above degree one are necessary to resolve the energy history of the homogeneous
model problem on this particular mesh.

\begin{figure} 
    \centering
%    \begin{subfigure}{0.8\textwidth}
        \centering
        \includegraphics[width=1.0\linewidth]{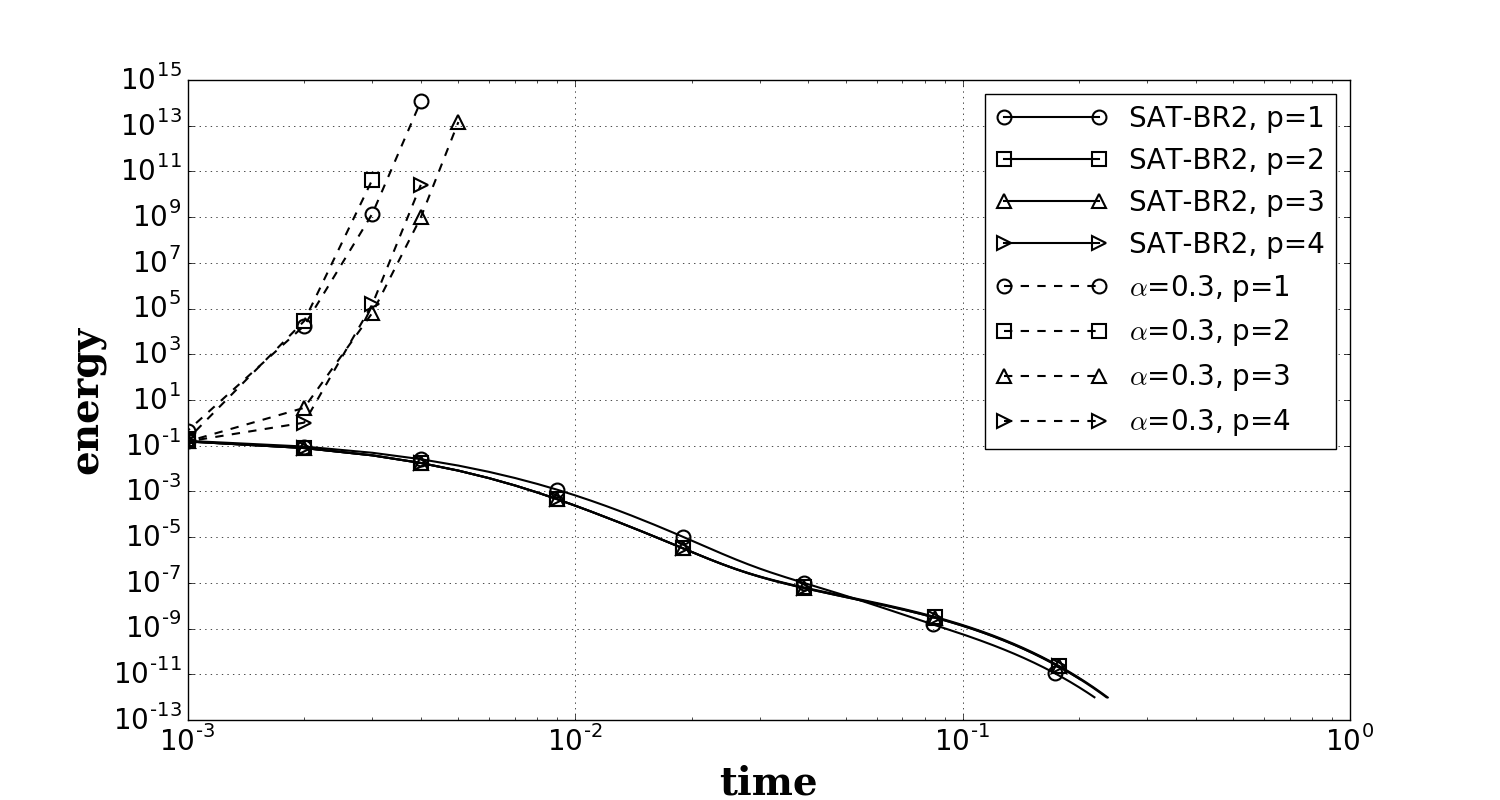}
        \caption{Energy history of homogeneous problem using BDF2}
        \label{fig:unsteady_energy}
%    \end{subfigure}    
\end{figure}

%--------------------------------------------------------------------------------
\section{Summary and Conclusions}\label{sec:conclude}

We generalized the SAT methodology to accommodate multi-dimensional SBP
discretizations of second-order PDEs, including SBP operators whose volume nodes
do not coincide with a boundary cubature.  We considered a general form of SAT
that uses dense penalty coefficient matrices on each face of the SBP elements.
Starting with this general framework, we carried out analyses of adjoint
consistency and energy stability, and, based on these analyses, we determined
conditions on the coefficient matrices that guarantee a conservative,
energy-stable, primal-consistent, and adjoint-consistent discretization.

In contrast with previous finite-element analyses of interior penalties, the SAT
conditions given here apply to general (tensor) diffusion coefficients and
arbitrary elements.  Furthermore, the conditions are entirely algebraic.  Using
the properties of SBP operators, our analysis accounts for inexact integration
explicitly from the beginning such that numerical instability caused by aliasing
errors is avoided.

Two popular interior penalty methods used in the FE community, BR2 and SIPG,
were generalized to multi-dimensional SBP-SAT discretizations.  We demonstrated
that the SIPG penalty can be obtained from BR2 using straightforward matrix
analysis; to the best of our knowledge, this algebraic connection has not been
previously reported.

Several numerical test cases were carried out to verify the analysis and compare
the performance of SAT-BR2 and SAT-SIPG when applied in conjunction with two
families of SBP operators: the so-called SBP-$\Gamma$ and SBP-$\Omega$
operators.  Mesh refinement studies confirmed that the discretizations achieve
design order and that they produce superconvergent functionals.  Comparisons
between different discretizations were carried out on an extremely skewed mesh.
The results suggest that the SBP-$\Gamma$ operators produce better conditioned
systems than the SBP-$\Omega$ operators.  Our stability bound was shown to be
relatively tight in the sense that a scaling factor applied to one of the SATs
could not be reduced below one order of magnitude without causing instability.

\bibliographystyle{aiaa}
\bibliography{references}

%\appendix

\ignore{
\section{Adjoint Analysis}

(In the following analysis, I have lumped the sigma matrices with the $\B$ matrix).

\subsection{Primal Consistency}

The bilinear form can be written
\begin{multline*}
  R_{h}(\Uh,\Vh) = \sum_{\kappa \in \fnc{T}_{h}} \Vkp^T \Hk \mat{D}_{\kappa} \Ukp \\
  - \sum_{\gamma \subset \Gamma^{\fnc{I}}}
  \begin{bmatrix} \Rgk \Vkp \\ \Rgn \Vnu \\ \Dgk \Vkp \\ \Dgn \Vnu \end{bmatrix}^T
  \begin{bmatrix}
    \phnt{-}\Siggk{1} & -\Siggk{1} & \Siggk{3} & \Siggk{3} \\
    -\Siggn{1} & \phnt{-}\Siggn{1} & \Siggn{3} & \Siggn{3} \\
    \phnt{-}\Siggk{2} & -\Siggk{2} & \Siggk{4} & \Siggk{4} \\
    -\Siggn{2} & \phnt{-}\Siggn{2} & \Siggn{4} & \Siggn{4}
  \end{bmatrix}
  \begin{bmatrix} \Rgk \Ukp \\ \Rgn \Unu \\ \Dgk \Ukp \\ \Dgn \Unu \end{bmatrix}.
\end{multline*}
Let $\bm{u}$ denote the exact solution restricted to the mesh.  Assuming the
exact solution is $C^1$ continuous, we have $\Rgk (\bm{u})_{\kappa} = \Rgn
(\bm{u})_{\nu} \equiv \Ug$ and $\Dgk (\bm{u})_{\kappa} = -\Dgn (\bm{u})_{\nu}
\equiv \Ug'$.  In this case, the bilinear form becomes
\begin{multline*}
  R_{h}(\Uh,\Vh) = \sum_{\kappa \in \fnc{T}_{h}} \Vkp^T \mat{H}_{\kappa} \mat{D}_{\kappa} \Ukp \\
  - \sum_{\gamma \subset \Gamma^{\fnc{I}}}
  \begin{bmatrix} \Rgk \Vkp \\ \Rgn \Vnu \\ \Dgk \Vkp \\ \Dgn \Vnu \end{bmatrix}^T
  \begin{bmatrix}
    \phnt{-}\Siggk{1} - \Siggk{1} &\qquad \Siggk{3} - \Siggk{3} \\
    -\Siggn{1} + \Siggn{1}        &\qquad \Siggn{3} - \Siggn{3} \\
    \phnt{-}\Siggk{2} - \Siggk{2} &\qquad \Siggk{4} - \Siggk{4} \\
    -\Siggn{2} + \Siggn{2} &\qquad \Siggn{4} - \Siggn{4}
  \end{bmatrix}
  \begin{bmatrix} \Ug \\ \Ug' \end{bmatrix} \\
  = \sum_{\kappa \in \fnc{T}_{h}} \Vkp^T \mat{H}_{\kappa} \mat{D}_{\kappa} \Ukp.
\end{multline*}
Thus, the interface terms vanish and we have primal consistency.

\subsection{Adjoint Consistency}

We can use a similar process to determine the conditions on the $\Sig$ matrices
for adjoint consistency.  First we use Identity~\eqref{eq:identity} to transfer
the action of the second-derivative operator to the test function $\Vh$ (I am
assuming the $\Sig$ matrices are symmetric):
\begin{multline*}
  R_{h}(\Uh,\Vh) = \sum_{\kappa \in \fnc{T}_{h}} \Ukp^T \mat{H}_{\kappa} \mat{D}_{\kappa} \Vkp \\
  - \sum_{\gamma \subset \Gamma^{\fnc{I}}}
  \begin{bmatrix} \Rgk \Ukp \\ \Rgn \Unu \\ \Dgk \Ukp \\ \Dgn \Unu \end{bmatrix}^{T}
  \begin{bmatrix}
    \phnt{-}\Siggk{1} & -\Siggn{1} & \phnt{-}\Siggk{2} + \B_\gamma & -\Siggn{2} \\
    -\Siggk{1}        & \phnt{-}\Siggn{1} & -\Siggk{2} & \phnt{-}\Siggn{2} + B_\gamma \\
    \phnt{-}\Siggk{3} - \B_\gamma & \Siggn{3} & \Siggk{4} & \Siggn{4} \\
    \phnt{-}\Siggk{3} & \Siggn{3} - \B_{\gamma} & \Siggk{4} & \Siggn{4} 
  \end{bmatrix}
  \begin{bmatrix} \Rgk \Vkp \\ \Rgn \Vnu \\ \Dgk \Vkp \\ \Dgn \Vnu \end{bmatrix}.
\end{multline*}
Let $\bm{v}$ denote the exact adjoint restricted to the mesh.  Assuming the
exact adjoint is $C^1$ continuous, we have $\Rgk (\bm{v})_{\kappa} = \Rgn
(\bm{v})_{\nu} \equiv \Vg$ and $\Dgk (\bm{v})_{\kappa} = -\Dgn (\bm{v})_{\nu}
\equiv \Vg'$.  Then the above becomes
\begin{multline*}
  R_{h}(\Uh,\Vh) = \sum_{\kappa \in \fnc{T}_{h}} \Ukp^T \mat{H}_{\kappa} \mat{D}_{\kappa} \Vkp \\
  - \sum_{\gamma \subset \Gamma^{\fnc{I}}}
  \begin{bmatrix} \Rgk \Ukp \\ \Rgn \Unu \\ \Dgk \Ukp \\ \Dgn \Unu \end{bmatrix}^{T}
  \begin{bmatrix}
    \phnt{-}\Siggk{1} -\Siggn{1} &\qquad \phnt{-}\Siggk{2} + \B_\gamma + \Siggn{2} \\
    -\Siggk{1} + \Siggn{1} &\qquad -\Siggk{2} - \Siggn{2} - B_\gamma \\
    \phnt{-}\Siggk{3} - \B_\gamma + \Siggn{3} &\qquad \Siggk{4} - \Siggn{4} \\
    \phnt{-}\Siggk{3} + \Siggn{3} - \B_{\gamma} &\qquad \Siggk{4} - \Siggn{4} 
  \end{bmatrix}
  \begin{bmatrix} \Vg \\ \Vg' \end{bmatrix}
\end{multline*}
In order for the interface terms to vanish for all $\Uh$, the entries in the
$4\times 2$ block matrix must be zero.  This produces the following four
conditions:
\begin{align*}
  \Siggk{1} &= \Siggn{1}, \\
  \Siggk{4} &= \Siggn{4}, \\
  \Siggk{2} + \Siggn{2} &= -\B_{\gamma}, \\
  \Siggk{3} + \Siggn{3} &= \B_{\gamma}. \\
\end{align*}
}

\ignore{
\section{A useful bound?}

\begin{lemma}\label{lem:vol_bound}
  The volume term in the stability analysis can be bounded below as follows:
  \begin{equation*}
    \Ukp^{T} \begin{bmatrix} \Dx^{T} & \Dy^{T} \end{bmatrix}_{\kappa}
    \begin{bmatrix} \H \Lamxx & \H \Lamxy \\ \H \Lamyx & \H \Lamyy \end{bmatrix}_{\kappa}
    \begin{bmatrix} \Dx^{T} & \Dy^{T} \end{bmatrix}_{\kappa} \Ukp
    \geq \sum_{\gamma \subset \partial \Omega_{\kappa}}
    c_{\kappa} \Ukp^{T} \Dgk^{T} \Dgk \Ukp,
  \end{equation*}
  where 
  \begin{equation*}
    c_{\kappa} = \frac{\lambda_{\min,\kappa}}{\sigma_{\max,\kappa}^{2} N_{f}},
  \end{equation*}
  $\sigma_{\max,\kappa}$ is the largest singular value of $\R$, $N_{f}$ is the number
  of faces of $\partial \Omega_{\kappa}$, and $\lambda_{\min,\kappa}$ is the smallest
  eigenvalue of the positive definite matrix
  \begin{equation*}
    \mat{\Lambda}_{\kappa}    
    \equiv
    \begin{bmatrix} \Lamxx & \Lamxy \\ \Lamyx & \Lamyy \end{bmatrix}_{\kappa}^{-1}
    \begin{bmatrix} \H & \mat{0} \\ \mat{0} & \H \end{bmatrix}_{\kappa}.
  \end{equation*}
\end{lemma}

\begin{proof}
  We begin by expressing the product using $\mat{G}_x$ and $\mat{G}_y$, and we
  also partition the product among the faces of $\Omega_{\kappa}$:
  \begin{equation*}
    \Ukp^{T} \begin{bmatrix} \Dx^{T} & \Dy^{T} \end{bmatrix}_{\kappa}
    \begin{bmatrix} \H \Lamxx & \H \Lamxy \\ \H \Lamyx & \H \Lamyy \end{bmatrix}_{\kappa}
    \begin{bmatrix} \Dx^{T} & \Dy^{T} \end{bmatrix}_{\kappa} \Ukp
    = \sum_{\gamma \subset \partial \Omega_{\kappa}} \frac{1}{N_f} \Ukp^{T} \begin{bmatrix} \mat{G}_x^{T} & \mat{G}_{y}^{T} \end{bmatrix}_{\kappa}
    \mat{\Lambda}_{\kappa} \begin{bmatrix} \mat{G}_x \\ \mat{G}_y \end{bmatrix}_{\kappa}
    \Ukp,
  \end{equation*}
  where we have made use of the diagonal structure of $\Lamxx$, $\Lamxy$,
  $\Lamyx$, $\Lamyy$ and $\H$.  Note that $\mat{\Lambda}$ is a $2n \times 2n$
  block matrix, whose blocks are diagonal matrices.  In addition, it is
  straightforward to show that $\mat{\Lambda}$ is symmetric positive definite;
  therefore, 
  \begin{equation*}
    \Ukp^{T} \begin{bmatrix} \Dx^{T} & \Dy^{T} \end{bmatrix}_{\kappa}
    \begin{bmatrix} \H \Lamxx & \H \Lamxy \\ \H \Lamyx & \H \Lamyy \end{bmatrix}_{\kappa}
    \begin{bmatrix} \Dx^{T} & \Dy^{T} \end{bmatrix}_{\kappa} \Ukp
    \geq \sum_{\gamma \subset \partial \Omega_{\kappa}} \frac{\lambda_{\min,\kappa}}{N_f} 
    \Ukp^{T} \begin{bmatrix} \mat{G}_x^{T} & \mat{G}_{y}^{T} \end{bmatrix}_{\kappa}
    \begin{bmatrix} \mat{G}_x \\ \mat{G}_y \end{bmatrix}_{\kappa}
    \Ukp.
  \end{equation*}
  
  Next, we will need the following inequality: if $\mat{A} \in
  \mathbb{R}^{m\times n}$, $m \leq n$, is rectangular matrix whose singular
  value decomposition is $\mat{A} = \mat{U} \mat{S} \mat{V}^{T}$, then
  \begin{equation}\label{eq:SVD_inequality}
    \bm{x}^{T} \mat{A}^{T} \mat{A} \bm{x}
    = \bm{x}^{T} \mat{V} \mat{S}^{2} \mat{V}^{T} \bm{x}
    = \| \mat{S} \mat{V}^{T} \bm{x} \|^2
    \leq \| \mat{S} \|^2 \|\mat{V}^{T} \bm{x} \|^2
    \leq \sigma_{\max,\kappa}(\mat{A})^2 \|\bm{x} \|,
  \end{equation}
  where $\sigma_{\max,\kappa}(\mat{A})$ is the largest singular value of $\mat{A}$.  In
  the above, the 2-norm is assumed and we have used the orthogonality of the
  matrix $\mat{V}$.

  Applying the inequality~\eqref{eq:SVD_inequality} with $\mat{A} =
  \left[\begin{smallmatrix} \mat{R} & \mat{0} \\ \mat{0} &
      \mat{R} \end{smallmatrix} \right]$ we find
  \begin{multline*}
    \Ukp^{T} \begin{bmatrix} \Dx^{T} & \Dy^{T} \end{bmatrix}_{\kappa}
    \begin{bmatrix} \H \Lamxx & \H \Lamxy \\ \H \Lamyx & \H \Lamyy \end{bmatrix}_{\kappa}
    \begin{bmatrix} \Dx^{T} & \Dy^{T} \end{bmatrix}_{\kappa} \Ukp \\
    \geq \sum_{\gamma \subset \partial \Omega_{\kappa}} \frac{\lambda_{\min,\kappa}}{\sigma_{\max,\kappa}^{2} N_f} 
    \Ukp^{T} \begin{bmatrix} \mat{G}_x^{T} & \mat{G}_{y}^{T} \end{bmatrix}_{\kappa}
    \begin{bmatrix} \R^{T} & \mat{0}^{T} \\ \mat{0}^{T} & \R^{T} \end{bmatrix}_{\gamma\kappa}
    \begin{bmatrix} \R & \mat{0} \\ \mat{0} & \R \end{bmatrix}_{\gamma\kappa}
    \begin{bmatrix} \mat{G}_x \\ \mat{G}_y \end{bmatrix}_{\kappa}
    \Ukp.
  \end{multline*}
  Here we have assumed that $\Rgk$ is the same matrix operator on all faces, up
  to a permutation.  This is true if all SBP elements use the same nodes and all
  faces use the same cubature points in reference space.

  Finally, we apply~\eqref{eq:SVD_inequality} a second time with $\mat{A} =
  \left[\begin{smallmatrix} \mat{N}_x & \mat{N}_y \end{smallmatrix} \right]$.
  This time, $\mat{A}$ is an orthogonal matrix, so its largest singular value is
  1.  Thus,
  \begin{multline*}
    \Ukp^{T} \begin{bmatrix} \Dx^{T} & \Dy^{T} \end{bmatrix}_{\kappa}
    \begin{bmatrix} \H \Lamxx & \H \Lamxy \\ \H \Lamyx & \H \Lamyy \end{bmatrix}_{\kappa}
    \begin{bmatrix} \Dx^{T} & \Dy^{T} \end{bmatrix}_{\kappa} \Ukp \\
    \geq \sum_{\gamma \subset \partial \Omega_{\kappa}} \frac{\lambda_{\min,\kappa}}{\sigma_{\max,\kappa}^{2} N_f} 
    \Ukp^{T} \begin{bmatrix} \mat{G}_x^{T} & \mat{G}_{y}^{T} \end{bmatrix}_{\kappa}
    \begin{bmatrix} \R^{T} & \mat{0}^{T} \\ \mat{0}^{T} & \R^{T} \end{bmatrix}_{\gamma \kappa}
    \begin{bmatrix} \mat{N}_{x}^{T} \\ \mat{N}_{y}^{T} \end{bmatrix}_{\gamma}
    \begin{bmatrix} \mat{N}_{x} & \mat{N}_{y} \end{bmatrix}_{\gamma}
    \begin{bmatrix} \R & \mat{0} \\ \mat{0} & \R \end{bmatrix}_{\gamma\kappa}
    \begin{bmatrix} \mat{G}_x \\ \mat{G}_y \end{bmatrix}_{\kappa}
    \Ukp \\
    = \sum_{\gamma \subset \partial \Omega_{\kappa}} \frac{\lambda_{\min,\kappa}}{\sigma_{\max,\kappa}^{2} N_f} \Ukp^{T} \Dgk^{T} \Dgk \Ukp.
  \end{multline*}
  
  \begin{equation}
  C_{BR2} \ge N_{f}\frac{2\max((\frac{\sigma_{\max}^2}{\lambda_{\min}})_{\kappa}, (\frac{\sigma_{\max}^2}{\lambda_{\min}})_{\nu})} {(\frac{\sigma_{min}^2}{\lambda_{\max}})_{\kappa} + (\frac{\sigma_{min}^2}{\lambda_{\max}})_{\nu}}
  \end{equation}
  
\end{proof}

\subsection{Conditions for Stability}

In the following stability analysis, we have applied the adjoint consistency
conditions to the $\Sig$ matrices.  In addition, we will assume $\Siggam{2} =
-\Siggam{3}$.  Thus, when $\Vh$ and $\Uh$ are replaced with $\bm{w}_{h} \in
\mathbb{R}^{\sum \nk}$, the bilinear form can be written
\begin{multline*}
  R_{h}(\bm{w}_{h},\bm{w}_{h}) = 
  - \sum_{\kappa \in \fnc{T}_{h}} 
  \Wkp^{T} \begin{bmatrix} \Dx \\ Dy \end{bmatrix}_{\kappa}^{T}
  \begin{bmatrix} \H \Lamxx & \H \Lamxy \\ \H \Lamyx & \H \Lamyy \end{bmatrix}
  \begin{bmatrix} \Dx \\ Dy \end{bmatrix}_{\kappa} \Wkp \\
  - \sum_{\gamma \subset \Gamma^{\fnc{I}}}
  \begin{bmatrix} \Rgk \Wkp \\ \Rgn \Wnu \\ \Dgk \Wkp \\ \Dgn \Wnu \end{bmatrix}^T
  \begin{bmatrix}
    \phnt{-}\Siggam{1} & -\Siggam{1} & \Siggam{3} - \Bg & \phantom{-}\Siggam{3} \\
    -\Siggam{1} & \phnt{-}\Siggam{1} & \Bg -\Siggam{3} & -\Siggam{3} \\
    -\Siggam{3} & \Siggam{3} & \Siggam{4} & \phantom{-}\Siggam{4} \\
    -\Siggam{3} + \Bg & \Siggam{3} -\Bg & \Siggam{4} & \phantom{-}\Siggam{4}
  \end{bmatrix}
  \begin{bmatrix} \Rgk \Wkp \\ \Rgn \Wnu \\ \Dgk \Wkp \\ \Dgn \Wnu \end{bmatrix}.
\end{multline*}
The off-diagonal blocks involving $\Siggam{3}$ can be manipulated using
relations of the form $\bm{u}^T \mat{P}^{T} \Sig \mat{Q} \bm{u} = \bm{u}^T
\mat{Q}^{T} \Sig \mat{P} \bm{u}$ to symmetrize these blocks for the energy
analysis.  In addition, using Lemma~\ref{lem:vol_bound} we can bound the volume
terms for element $\kappa$ using interface terms.  These steps produce the bound
\begin{multline*}
  R_{h}(\bm{w}_{h},\bm{w}_{h}) \leq \\
  - \sum_{\gamma \subset \Gamma^{\fnc{I}}}
  \begin{bmatrix} \Rgk \Wkp \\ \Rgn \Wnu \\ \Dgk \Wkp \\ \Dgn \Wnu \end{bmatrix}^T
  \setlength{\arraycolsep}{7pt}
  \begin{bmatrix}
    \phnt{-}\Siggam{1} & -\Siggam{1} & -\frac{1}{2}\Bg & \phantom{-}\frac{1}{2}\Bg \\
    -\Siggam{1} & \phnt{-}\Siggam{1} & \phantom{-}\frac{1}{2}\Bg & -\frac{1}{2}\Bg \\
    -\frac{1}{2}\Bg & \phantom{-}\frac{1}{2}\Bg & \Siggam{4} + c_{\kappa}\I & \Siggam{4} \\
    \phantom{-}\frac{1}{2}\Bg & -\frac{1}{2}\Bg & \Siggam{4} & \Siggam{4} + c_{\nu} \I
  \end{bmatrix}
  \begin{bmatrix} \Rgk \Wkp \\ \Rgn \Wnu \\ \Dgk \Wkp \\ \Dgn \Wnu \end{bmatrix}.
\end{multline*}
Next, we separate the terms involving $\Siggam{4}$ and then apply a block
factorization to the remaining terms.
\begin{multline}\label{eq:res_bound}
  R_{h}(\bm{w}_{h},\bm{w}_{h}) \leq
  - \sum_{\gamma \subset \Gamma^{\fnc{I}}}
  \begin{bmatrix} \Dgk \Wkp \\ \Dgn \Wnu \end{bmatrix}^T
  \begin{bmatrix} \Siggam{4} & \Siggam{4} \\ \Siggam{4} & \Siggam{4} \end{bmatrix}
  \begin{bmatrix} \Dgk \Wkp \\ \Dgn \Wnu \end{bmatrix} \\
  - \sum_{\gamma \subset \Gamma^{\fnc{I}}} \bm{z}_{\kappa\nu}^{T} \mat{A}_{\gamma}^{T} \mat{K}_{\gamma} \mat{A}_{\gamma} \bm{z}_{\kappa\nu}
\end{multline}
where
\begin{equation*}
  \bm{z}_{\kappa\nu}^T = \begin{bmatrix} (\Rgk \Wkp)^T & (\Rgn \Wnu)^T & 
    (\Dgk \Wkp)^T & (\Dgn \Wnu)^T \end{bmatrix},
\end{equation*}
and the block factorization is given by
\begin{gather*}\setlength{\arraycolsep}{10pt}
  \mat{A}_{\gamma} = \begin{bmatrix}
    \left[\begin{smallmatrix} 1 & 0 \\ 0 & 1 \end{smallmatrix}\right] \otimes \I &
    \mat{0} \\[1.5ex]
    \frac{1}{2} \left[\begin{smallmatrix} -1/c_\kappa & \phantom{-}1/c_\kappa \\ 
        \phantom{-}1/c_\nu & -1/c_\nu \end{smallmatrix}\right] \otimes \Bg & 
    \left[\begin{smallmatrix} 1 & 0 \\ 0 & 1 \end{smallmatrix}\right] \otimes \I
  \end{bmatrix}
  \\
  \setlength{\arraycolsep}{10pt}
  \mat{K}_{\gamma} = \begin{bmatrix}
     \left[\begin{smallmatrix}\phantom{-}1 &-1\\-1&\phantom{-}1\end{smallmatrix}\right]
     \otimes \left( \Siggam{1} - \frac{c_\kappa + c_\nu}{4 c_\kappa c_\nu} \Bg^2 \right)
     & \mat{0} \\[1.5ex]
     \mat{0} & 
     \left[\begin{smallmatrix} c_\kappa & 0 \\ 0 & c_\nu \end{smallmatrix}\right]
     \otimes \I
  \end{bmatrix}
\end{gather*}
In order for the right-hand side of \eqref{eq:res_bound} to be non-positive,
$\Siggam{4}$ and the (1,1) block of $\mat{K}_{\gamma}$ must be positive
semi-definite.  Consequently, the following matrix must be positive
semi-definite for stability:
\begin{equation*}
  \Siggam{1} - \frac{c_\kappa + c_\nu}{4 c_\kappa c_\nu} \Bg^2.
\end{equation*}

\subsection{SBP element trace inequality}

\begin{theorem}\label{thm:trace}
  Let $\Hk$ be a diagonal matrix whose entries are the weights of a
  cubature rule defined on a regular polytope with $N_{\mathsf{f}}$ faces.  Let
  $\Rgk \in \mathbb{R}^{\nk\times n_{\gamma}}$ be an interpolation/extrapolation
  operator from the cubature nodes corresponding to $\Hk$ to the
  cubature nodes of the face $\gamma$.  Let $\Bg \in \mathbb{R}^{n_{\gamma}
    \times n_{\gamma}}$ be a diagonal matrix holding the weights of the cubature
  rule on $\gamma$.  Then we have the discrete trace inequality
  \begin{equation}
    \sum_{\gamma \subset \partial \Omega_{\kappa}} \Ukp^{T} \Rgk^T \Bg \Rgk \Ukp
    \leq (N_{\mathsf{f}} \rho_{\kappa}) \; \Ukp^{T} \Hk \Ukp, 
    \qquad \forall \; \Ukp \in \mathbb{R}^{\nk}, 
  \end{equation}
  where $\rho_{\kappa}$ is the spectral radius of $\Hk^{-\frac{1}{2}}
  \Rgk^T \Bg \Rgk \Hk^{-\frac{1}{2}}$.
\end{theorem}

\begin{proof}
  Consider one face $\gamma \subset \partial \Omega_{\kappa}$.  We have
  \begin{equation*}
    \Ukp^T \Rgk^T \Bg \Rgk \Ukp
    = \Ukp^{T} \Hk^{\frac{1}{2}} \left(\Hk^{-\frac{1}{2}} \Rgk^T \Bg \Rgk \Hk^{-\frac{1}{2}}\right) \Hk^{\frac{1}{2}} \Ukp 
    \leq \rho_{\kappa} \; \Ukp^{T} \Hk \Ukp,
  \end{equation*}
  where $\rho_{\kappa}$ is defined in the statement of the theorem.  Since the
  domain is a polytope, the same inequality applies on all faces, and the result
  follows.
\end{proof}

The trace inequality can be adapted to any affinely transformed version of the
polytope.  In this case it takes the form
\begin{equation}
  \sum_{\gamma \subset \partial \Omega_{\kappa}} \Ukp^{T} \Rgk^T \Bg \Rgk \Ukp
  \leq  \frac{\mathsf{Volume}(\partial \Omega_{\kappa})}{\mathsf{Volume}(\Omega_{\kappa})} \rho_{\kappa} \; \Ukp^{T} \Hk \Ukp, 
  \qquad \forall \; \Ukp \in \mathbb{R}^{\nk}. 
\end{equation}

\begin{multline*}
  \sum_{\gamma \subset \partial \Omega_{\kappa}}
  \begin{bmatrix} \Rgk \bm{u}_{x,\kappa} \\ \Rgk \bm{u}_{y,\kappa} \end{bmatrix}^{T}
  \begin{bmatrix} \Bg & \\ & \Bg \end{bmatrix}
  \begin{bmatrix} \Lamxx & \Lamxy \\ \Lamyx & \Lamyy \end{bmatrix}_{\gamma}
  \begin{bmatrix} \Rgk \bm{u}_{x,\kappa} \\ \Rgk \bm{u}_{y,\kappa} \end{bmatrix} \\
  \leq (N_{\mathsf{f}} \rho_{\kappa}) 
  \begin{bmatrix} \bm{u}_{x,\kappa} \\ \bm{u}_{y,\kappa} \end{bmatrix}^{T}
  \begin{bmatrix} \Hk & \\ & \Hk \end{bmatrix}
  \begin{bmatrix} \Lamxx & \Lamxy \\ \Lamyx & \Lamyy \end{bmatrix}_{\kappa}
  \begin{bmatrix} \bm{u}_{x,\kappa} \\ \bm{u}_{y,\kappa} \end{bmatrix},
  \qquad \forall \; \Ukp \in \mathbb{R}^{\nk},
\end{multline*}
where $\rho_{\kappa}$ is the spectral radius of
\begin{equation*}
  \begin{bmatrix} \Hk & \\ & \Hk \end{bmatrix}^{-\frac{1}{2}}
  \begin{bmatrix} \Lamxx & \Lamxy \\ \Lamyx & \Lamyy \end{bmatrix}_{\kappa}^{-\frac{1}{2}}
  \begin{bmatrix} \Rgk^T &  \\ & \Rgk^T \end{bmatrix}
  \begin{bmatrix} \Bg & \\ & \Bg \end{bmatrix}
  \begin{bmatrix} \Lamxx & \Lamxy \\ \Lamyx & \Lamyy \end{bmatrix}_{\gamma}
  \begin{bmatrix} \Rgk^T &  \\ & \Rgk^T \end{bmatrix}
  \begin{bmatrix} \Lamxx & \Lamxy \\ \Lamyx & \Lamyy \end{bmatrix}_{\kappa}^{-\frac{1}{2}}
  \begin{bmatrix} \Hk & \\ & \Hk \end{bmatrix}^{-\frac{1}{2}}
\end{equation*}

\begin{equation*}
\Cgk \left(\alpha_{\gamma\kappa}\Lam_{\kappa}^{*}\right)^{-1}\Cgk^T
\preceq \frac{\rho_{\gamma,\kappa}}{4} \Bg 
\begin{bmatrix} \Nxg & \Nyg \end{bmatrix}
\begin{bmatrix} \Lamxx & \Lamxy \\ \Lamyx & \Lamyy \end{bmatrix}_{\gamma}
\begin{bmatrix} \Nxg \\ \Nyg \end{bmatrix}
\end{equation*}
where $\rho_{\gamma,\kappa}$ is the spectral radius of
\begin{equation*}
  \begin{bmatrix} \Bg^{\frac{1}{2}} & \\ & \Bg^{\frac{1}{2}} \end{bmatrix}
  \begin{bmatrix} \Lamxx & \Lamxy \\ \Lamyx & \Lamyy \end{bmatrix}_{\gamma}^{-\frac{1}{2}}
  \begin{bmatrix} \Rgk &  \\ & \Rgk \end{bmatrix}
  \begin{bmatrix} \Hk & \\ & \Hk \end{bmatrix}^{-1}
  \begin{bmatrix} \Lamxx & \Lamxy \\ \Lamyx & \Lamyy \end{bmatrix}_{\kappa}
  \begin{bmatrix} \Rgk^T &  \\ & \Rgk^T \end{bmatrix}
  \begin{bmatrix} \Lamxx & \Lamxy \\ \Lamyx & \Lamyy \end{bmatrix}_{\gamma}^{-\frac{1}{2}}
  \begin{bmatrix} \Bg^{\frac{1}{2}} & \\ & \Bg^{\frac{1}{2}} \end{bmatrix}
\end{equation*}

\subsection{Conditions for stability with "rigorous math derivation"}
In the old way, we first approximate the volume integral as
\begin{equation*}
\begin{aligned}
\Wkp^T
\begin{bmatrix}
\mat{G}_x \\ \mat{G}_y
\end{bmatrix}^T 
\Lam_{\kappa}^{-1}
\begin{bmatrix}
\mat{G}_x \\ \mat{G}_y
\end{bmatrix}\Wkp \\
\ge
\Wkp^T
\begin{bmatrix}
\mat{G}_x \\ \mat{G}_y
\end{bmatrix}^T
\mat{X}^T
\Lam_{\kappa}^{-1}
\mat{X}
\begin{bmatrix}
\mat{G}_x \\ \mat{G}_y
\end{bmatrix}\Wkp \\
\end{aligned}
\end{equation*}
where
\begin{equation*}
\begin{aligned}
\mat{X} = \begin{bmatrix}
\Nx\R & \Ny\R
\end{bmatrix}_{\gamma}^{+} 
\begin{bmatrix}
\Nx\R & \Ny\R
\end{bmatrix}_{\gamma}
\end{aligned}
\end{equation*}
The inequality requires 
\begin{equation} \label{inequality}
\Lam_{\kappa}^{-1} - \mat{X}^T\Lam_{\kappa}^{-1}\mat{X} \succeq 0
\end{equation}
Unfortunately it is generally not true. The rank of 
$\begin{bmatrix}
    \Nx\R & \Ny\R
\end{bmatrix}$
is $N_{fn}$ while its dimension is $(N_{fn}, 2N_{vn})$. Therefore
\begin{equation}
\mat{X} = \begin{bmatrix}
I_{N_{fn}} & \\ & 0
\end{bmatrix}
\end{equation}
and 
\begin{equation}
\mat{X}^T\Lam_{\kappa}^{-1}\mat{X} = 
\begin{bmatrix}
\mat{L}_{xx} & \mat{L}_{xy} \\
\mat{L}_{yx} & \mat{L}_{yy} 
\end{bmatrix} 
\end{equation}
where
\begin{equation}
\mat{L}_{xx} = 
\begin{bmatrix}
\lambda_{xx, 1} &&&&&\\
& \ddots &&&&& \\
&& \lambda_{xx, N_{fn}} &&&\\
&&& 0 &&\\
&&&& \ddots & \\
&&&&& 0 
\end{bmatrix} 
\end{equation}
and 
\begin{equation*}
\mat{L}_{xy} = \mat{L}_{yx} = \mat{L}_{yy} = \mat{0}
\end{equation*}
Then it is obvious that the inequality (\ref{inequality}) does not hold.

Another way to proceed is shown below.
First we partition the volume integral over element $\kappa$ among its faces:
\begin{equation}
    \Wkp^T\mat{P}_{\kappa}\Wkp = \sum_{\gamma\in\partial\kappa}w_{\gamma\kappa}\Wkp^T\mat{P}_{\kappa}\Wkp
\end{equation}
where $w_{\gamma\kappa}$ is the face weighted coefficients which satisfy
\begin{equation}
\sum_{\gamma\in\partial\kappa}w_{\gamma\kappa} = 1
\end{equation}
We can see later that different weight coefficients are used in SIPG and BR2.
Let 
\begin{equation}
    \mat{P} = 
    \begin{bmatrix}
    \mat{D}_x \\ \mat{D}_y
    \end{bmatrix}^T
    \Lam
    \begin{bmatrix}
    \H & \\ & \H
    \end{bmatrix}
    \begin{bmatrix}
    \mat{D}_x \\ \mat{D}_y
    \end{bmatrix}
    = 
    \begin{bmatrix}
    \mat{G}_x \\ \mat{G}_y
    \end{bmatrix}^T
    \Lam^{-1}
    \begin{bmatrix}
    \H & \\ & \H
    \end{bmatrix}
    \begin{bmatrix}
    \mat{G}_x \\ \mat{G}_y
    \end{bmatrix}
\end{equation}
and on substitution, the bilinear form can be rewritten as a matrix form (without boundary terms)
\begin{equation}
\begin{aligned}
&B(\bm{w}_h, \bm{w}_h)\\ 
= 
&\begin{bmatrix}
(\Rg\W)_{\kappa} \\ (\Rg\W)_{\nu} \\ 
(\mat{G}_x\W)_{\kappa} \\ (\mat{G}_y\W)_{\kappa} \\
(\mat{G}_x\W)_{\nu} \\ (\mat{G}_y\W)_{\nu}
\end{bmatrix}^T
\begin{bmatrix}
\Sig_{\gamma}^{(1)} & 
-\Sig_{\gamma}^{(1)} &
-\frac{1}{2}\B_{\gamma}\Nx\Rgk & 
-\frac{1}{2}\B_{\gamma}\Ny\Rgk &
-\frac{1}{2}\B_{\gamma}\Nx\Rgn & 
-\frac{1}{2}\B_{\gamma}\Ny\Rgn &
\\
-\Sig_{\gamma}^{(1)} & 
\Sig_{\gamma}^{(1)} &
\frac{1}{2}\B_{\gamma}\Nx\Rgk & 
\frac{1}{2}\B_{\gamma}\Ny\Rgk &
\frac{1}{2}\B_{\gamma}\Nx\Rgn & 
\frac{1}{2}\B_{\gamma}\Ny\Rgn &
\\
-\frac{1}{2}(\B_{\gamma}\Nx\Rgk)^T & 
\frac{1}{2}(\B_{\gamma}\Nx\Rgk)^T &
\Lam^{*}_{xx,\kappa} &
\Lam^{*}_{xy,\kappa} &
&
&
\\
-\frac{1}{2}(\B_{\gamma}\Ny\Rgk)^T & 
\frac{1}{2}(\B_{\gamma}\Ny\Rgk)^T &
\Lam^{*}_{yx,\kappa} &
\Lam^{*}_{yy,\kappa} &
&
&
\\
-\frac{1}{2}(\B_{\gamma}\Nx\Rgn)^T & 
\frac{1}{2}(\B_{\gamma}\Nx\Rgn)^T &
&
&
\Lam^{*}_{xx,\nu} &
\Lam^{*}_{xy,\nu} &
&
&
\\
-\frac{1}{2}(\B_{\gamma}\Ny\Rgn)^T & 
\frac{1}{2}(\B_{\gamma}\Ny\Rgn)^T &
&
&
\Lam^{*}_{yx,\nu} &
\Lam^{*}_{yy,\nu} &
&
&
\end{bmatrix} 
\begin{bmatrix}
(\Rg\W)_{\kappa} \\ (\Rg\W)_{\nu} \\ 
(\mat{G}_x\W)_{\kappa} \\ (\mat{G}_y\W)_{\kappa} \\
(\mat{G}_x\W)_{\nu} \\ (\mat{G}_y\W)_{\nu}
\end{bmatrix}
\\
=&\begin{bmatrix}
(\Rg\W)_{\kappa} \\ (\Rg\W)_{\nu} \\ 
(\mat{G}_x\W)_{\kappa} \\ (\mat{G}_y\W)_{\kappa} \\
(\mat{G}_x\W)_{\nu} \\ (\mat{G}_y\W)_{\nu}
\end{bmatrix}^T
\begin{bmatrix}
\mat{A} &\mat{B} \\
\mat{B}^T & \mat{C}
\end{bmatrix}
\begin{bmatrix}
(\Rg\W)_{\kappa} \\ (\Rg\W)_{\nu} \\ 
(\mat{G}_x\W)_{\kappa} \\ (\mat{G}_y\W)_{\kappa} \\
(\mat{G}_x\W)_{\nu} \\ (\mat{G}_y\W)_{\nu}
\end{bmatrix}
\end{aligned}
\end{equation}
where

\begin{equation*}
\begin{bmatrix}
\Lam^{*}_{xx} &
\Lam^{*}_{xy} \\
\Lam^{*}_{yx} &
\Lam^{*}_{yy} 
\end{bmatrix}
=\frac{1}{w_{\gamma\kappa}}
\begin{bmatrix}
\Lam_{xx} &
\Lam_{xy} \\
\Lam_{yx} &
\Lam_{yy} 
\end{bmatrix}^{-1}
\begin{bmatrix}
\H & \\ & \H
\end{bmatrix}
\end{equation*}

\begin{equation*}
\mat{A} = \begin{bmatrix}
\Sig_{\gamma}^{(1)} & -\Sig_{\gamma}^{(1)} \\
-\Sig_{\gamma}^{(1)} & \Sig_{\gamma}^{(1)}
\end{bmatrix}
\end{equation*}

\begin{equation*}
\mat{C} = \begin{bmatrix}
\Lam^{*}_{xx,\kappa} & \Lam^{*}_{xy,\kappa} &
&& 
\\
\Lam^{*}_{xx,\kappa} & \Lam^{*}_{xy,\kappa} &
&& 
\\
&&\Lam^{*}_{xx,\nu} & \Lam^{*}_{xy,\nu} & 
\\
&&\Lam^{*}_{xx,\nu} & \Lam^{*}_{xy,\nu} & 
\end{bmatrix}
\end{equation*}

\begin{equation*}
\mat{B} = \begin{bmatrix}
-\frac{1}{2}\B_{\gamma}\Nx\Rgk & 
-\frac{1}{2}\B_{\gamma}\Ny\Rgk &
-\frac{1}{2}\B_{\gamma}\Nx\Rgn & 
-\frac{1}{2}\B_{\gamma}\Ny\Rgn &
\\
\frac{1}{2}\B_{\gamma}\Nx\Rgk & 
\frac{1}{2}\B_{\gamma}\Ny\Rgk &
\frac{1}{2}\B_{\gamma}\Nx\Rgn & 
\frac{1}{2}\B_{\gamma}\Ny\Rgn &
\end{bmatrix}
\end{equation*}

According to Schur complement theory
\begin{equation*}
\begin{bmatrix}
\mat{A} &\mat{B} \\
\mat{B}^T & \mat{C}
\end{bmatrix} \succeq 0
\end{equation*}
is equivalent to
\begin{equation*}
\begin{aligned}
\mat{A} \succeq 0 \\
\mat{A} - \mat{B}\mat{C}^{-1}\mat{B}^T \succeq 0
\end{aligned}
\end{equation*}
On substitution and using Schur complement theory again, the condition for stability is 
\begin{equation*}
\begin{aligned}
\Sig_{\gamma}^{(1)} - 
\frac{1}{4}\Bg\begin{bmatrix}\Nx & \Ny\end{bmatrix}
(\mat{F}_{\kappa} + \mat{F}_{\nu})
\begin{bmatrix}\Nx \\ \Ny\end{bmatrix}\Bg
\succeq 0
\end{aligned}
\end{equation*}
where
\begin{equation*}
\begin{aligned}
\mat{F} = w_{\gamma}
\begin{bmatrix}
\Rg& \\ & \Rg
\end{bmatrix}
\begin{bmatrix}
\Lamxx\H^{-1} & \Lamxy\H^{-1} \\
\Lamyx\H^{-1} & \Lamyy\H^{-1}
\end{bmatrix}
\begin{bmatrix}
\Rg& \\ & \Rg
\end{bmatrix}^T
\end{aligned}
\end{equation*}

}

\end{document}